\documentclass{article}

\usepackage[utf8]{inputenc} % allow utf-8 input
\usepackage[T1]{fontenc}    % use 8-bit T1 fonts
\usepackage{hyperref}       % hyperlinks
\usepackage{url}            % simple URL typesetting
\usepackage{booktabs}       % professional-quality tables
\usepackage{amsfonts}       % blackboard math symbols
\usepackage{nicefrac}       % compact symbols for 1/2, etc.
\usepackage{microtype}      % microtypography
\usepackage{lipsum}

\usepackage{graphicx}
\usepackage{amsmath}
\usepackage{amssymb}
\usepackage{amsthm}
\usepackage{amscd}
\usepackage{alltt}
\usepackage[all]{xy}

\newtheorem{lemma}{Lemma}
\newtheorem{prop}{Proposition}
\newtheorem{theorem}{Theorem}
\newtheorem*{theorem*}{Theorem}

\newcommand{\rar}{\rightarrow}

\newcommand{\dotminus}{\mathbin{\dot{-}}}
\newcommand{\dottimes}{\mathbin{\dot{\times}}}

\DeclareMathOperator\Mat{M}
\DeclareMathOperator\gl{GL}
\DeclareMathOperator\unit{U}

\DeclareMathOperator\eunit{EU}
\DeclareMathOperator\gunit{GU}

\DeclareMathOperator\Cent{C}
\DeclareMathOperator\diag{D}

\DeclareMathOperator\tr{tr}

\DeclareMathOperator\Norm{\mathrm{N}}
\DeclareMathOperator\Ker{Ker}

\renewcommand{\leq}{\leqslant}
\renewcommand{\geq}{\geqslant}
\renewcommand{\phi}{\varphi}

\newcommand{\eps}{\varepsilon}
\newcommand{\leqt}{\mathbin{\lefteqn{\text{\raisebox{1pt}{\(\lhd\)}}}\mspace{-5mu} \leq}}
\newcommand{\inv}[1]{\!\;\overline{\!\!\:#1\vphantom d\!\!\:}\;\!}

\DeclareMathOperator{\id}{id}
\DeclareMathOperator\herm{H}

\newcommand{\veccol}[2]{\bigl(\begin{smallmatrix} #1\\ #2 \end{smallmatrix}\bigr)}
\DeclareMathOperator{\Hom}{Hom}
\DeclareMathOperator{\End}{End}
\DeclareMathOperator{\Aut}{Aut}
\DeclareMathOperator{\Heis}{Heis}

\DeclareMathOperator{\met}{M}
\DeclareMathOperator{\hyp}{H}

\newcommand{\sub}[2]{{_{#1\!}{#2}}}

\makeatletter
\newcommand{\bigperp}{\mathop{\mathpalette\bigp@rp\relax}\displaylimits}
\newcommand{\bigp@rp}[2]{\vcenter{\m@th\hbox{\scalebox{\ifx#1\displaystyle2.1\else1.5\fi}{\(#1\perp\)}}}}
\makeatother

\newcommand{\bim} {\mathbf{Bim}}
\newcommand{\ibim}{\mathbf{iBim}}
\newcommand{\hbim}{\mathbf{hBim}}
\newcommand{\qbim}{\mathbf{qBim}}

\title{Groups normalized by the odd unitary group}

\author{
  Egor Voronetsky \\
  {\em \small Chebyshev Laboratory,} \\
  {\em \small St. Petersburg State University,} \\
  {\em \small 14th Line V.O., 29B,} \\
  {\em \small Saint Petersburg 199178 Russia} \\
  {\small voronetckiiegor@yandex.ru}
}

\begin{document}
\maketitle

\begin{abstract}
We will give a definition of quadratic forms on bimodules and prove the sandwich classification theorem for subgroups of the general linear group \(\gl(P)\) normalized by the elementary unitary group \(\eunit(P)\) if \(P\) is a nondegenerate bimodule with large enough hyperbolic part.
\end{abstract}

\section{Introduction}

The classical groups over fields, i.\,e. linear, orthogonal and symplectic groups (and their twisted forms, including all infinite families of simple Lie groups) were studied for quite a long time. In Emil Artin's book \cite{Artin} they are constructed from the point of view of projective geometry. Many properties of these groups over arbitrary division ring, such as normal subgroups, automorphisms and presentations, are written in Dieudonn\'e's papers \cite{Dieudonne2,Dieudonne1,Dieudonne3}.

Later the definitions of classical groups were generalized to the case of arbitrary underlying commutative rings. The most complicated case of orthogonal groups can be found in famous book \cite{Knus}. Many authors proposed a general definition of unitary groups that should include all classical groups considered before and that should work for arbitrary non-commutative rings (with involutions). The most important papers of this type are \cite{Tits,Wall,Weil}.

Nevertheless, all definitions of generalized unitary groups were unsatisfactory (first of all, they were different for rings with invertible \(2\) and for rings of characteristic \(2\)). In his dissertation \cite{BakThesis} Antony Bak gave the definition of form parameters for a non-commutative ring with an involution and used this to define the unitary group as the family of all module automorphisms that preserve a hermitian form and a quadratic form (defined through a form parameter) simultaneously. The definition allowed to transfer all results of the classical theory over division rings to the general case of even groups (this means the families \(A_l\), \(C_l\), and \(D_l\) for reductive group schemes) for those problems that have any sense in such generality. All these results were published in H. Bass's paper \cite{Bass1}.

Later Victor Petrov proposed a generalization of form parameters and corresponding groups to the odd case in paper \cite{OddPetrov}. Odd form parameters introduced there allows to give the right definition of unitary groups in full generality and to transfer many results about even groups to them. Unfortunately, odd form parameters in his definition depend on a module itself. This does not allow to construct the category of all such modules and, for example, to define an orthogonal sum operation on the category.

First several sections of the current paper, i.\,e. from \(2\) to \(6\), further generalize these ideas. For a give hermitian form we will define corresponding quadratic forms with values in special algebraic objects called quadratic structures (which generalize factors by a form parameter in Bak's theory or factor-groups of the Heisenberg groups by an odd form parameter in Petrov's theory). Moreover, our quadratic forms will be constructed not for right modules, but for bimodules, as in books \cite{Bak,Knus} in the case of hermitian forms. All triples consisting of a bimodule, a hemitian form and a corresponding quadratic form constitute a bicategory (with respect to tensor products) satisfying natural properties.

One of the most important problems about unitary groups is the classification of their normal subgroups (it is so-called sandwich classification because of the type of results). In the case of groups over division rings and non-degenerate forms these results are classical, including the simplicity theorems of the corresponding projective groups. In the case of general rings the problem is closely related with lower unitary \(K\)-theory (that is, generalizing of algebraic \(K\)-functors from linear to unitary groups)

A lot of papers deal with the sandwich classification of normal subgroups of classical groups (i.\,e. symplectic and orthogonal groups) and their generalizations. We pick out only \cite{Wilson,Golub,Li1,Li2,Vaser3,Vaser4,VaserYou}. Later these results were generalized for unitary Bak's groups in \cite{BakThesis,UnitaryII} and even for odd Petrov's groups in \cite{OddPreusser}. Also Raimund Preusser gave new short proofs of the sandwich classification of normal subgroups in the case of commutative rings in his recent papers \cite{EvenCommutative,OddCommutative}.

We should also mention the construction and the proof of normality of the elementary unitary group, since this is a basis for sandwich classification theorems. For the linear groups over commutative rings this is a famous Suslin's normality theorem \cite{Sus} and in the case of unitary groups there are separate papers \cite{BakVav,UnitaryI} concerning this problem.

Lower unitary \(K\)-theory is a theme of many papers. Let us mention only \cite{Bass1, Bass2, Hazrat, Tang1}, overview \cite{Bak65}, and book \cite{HahnO}. Similar results for classical groups were published in \cite{Vaser1,Vaser2}.

Another problem of big interest is the sandwich classification of intermediate subgroups between various classical groups, for example, between \(\mathrm{Sp}(2l, K)\) and \(\gl(2l, K)\). If \(K\) is a field, then the problem leads to the description of many maximal subgroups of classical groups. The sandwich classification theorems for overgroups of classical groups in the corresponding general linear group were proved for an arbitrary commutative ring \(K\) in papers \cite{OrthogonalEven,Symplectic,OrthogonalOdd,Hong1,Hong2}. Nowadays similar problems for various pairs of group schemes are a popular theme of research in St. Petersburg mathematical school, this is written in overviews \cite{Vav1} and \cite{VavStep}. In the case of Bak's unitary groups the sandwich classification for overgroups was obtained in Petrov's paper \cite{Petrov}.

The mail goal of this paper is to generalize all these results for odd unitary groups over a bimodule \(P\) with hermitian and quadratic forms. More precisely, we a going to classify all subgroups of the general linear groups \(\gl(P)\) normalized by the elementary unitary group \(\eunit(P)\). The exact statement is given below, all necessary definitions will given in the main text.

\begin{theorem*}
Suppose that \(K\) is a commutative ring with an involution, \(S\) and \(R\) are quadratic \(K\)-algebras, and \(P\) is a quadratic \(S\)-\(R\)-bimodule with non-degenerate hermitian form. Suppose also that \(\End(P)\) is a quasi-finite \(K\)-algebra, \(P = P_0 \perp \hyp(P_1) \perp \ldots \hyp(P_l)\), \(l \geq 4\), \(\End(P_0)\) is generated by less than \(4l^2\) elements as a \(K\)-module, and the bimodule \(P_i\) is isomorphic to a direct summand of \(P_j^N\) for \(N\) big enough and for any \(i, j \in \{-l, \ldots, -1, 1, \ldots, l\}\) (where \(P_{-i} = \inv{P_i^\vee}\) for \(i > 0\)). Then there exists explicitly described levels \(L\) and groups \(\eunit(P, L), \gunit(P, \lfloor L \rfloor) \leq \gl(P)\) such that every subgroup \(G \leq \gl(P)\) normalized by \(\eunit(P)\) satisfies the inequality \(\eunit(P, L) \leq G \leq \gunit(P, \lfloor L \rfloor)\) for unique level \(L\). Conversely, every group satisfying such an inequality is normalized by \(\eunit(P)\).
\end{theorem*}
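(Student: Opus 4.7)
The plan is to follow the standard two-step sandwich classification strategy: first trap $G$ inside a full-congruence envelope $\gunit(P, \lfloor L \rfloor)$, then produce every elementary generator of $\eunit(P, L)$ inside $G$. The hyperbolic decomposition with $l \geq 4$ pairs plus the ``transitivity'' hypothesis that each $P_i$ is a direct summand of $P_j^N$ plays the role that Witt index $\geq 4$ plays classically: it supplies enough disjoint hyperbolic pairs to move parameters between indices and to isolate a single commutator coordinate at a time. Quasi-finiteness of $\End(P)$ over $K$ is precisely what lets one localize and complete at maximal ideals of the center to reduce the technical steps to the semilocal case.

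First I would extract the level $L = L(G)$ as the smallest quadratic substructure (in the sense of the quadratic-structure formalism built in Sections 2--6) whose associated elementary subgroup contains all commutators $[G, \eunit(P)]$; equivalently, $L$ is generated by the coordinates of $[g, e_\alpha(\xi)]$ as $g$ ranges over $G$ and $e_\alpha(\xi)$ ranges over elementary unitary generators. Functoriality of quadratic structures ensures these coordinates assemble into a genuine level rather than a disconnected family of ideals. The upper inclusion $G \leq \gunit(P, \lfloor L \rfloor)$ is then essentially tautological, since $\gunit(P, \lfloor L \rfloor)$ is by construction the largest subgroup of $\gl(P)$ whose commutators with $\eunit(P)$ land inside $\eunit(P, L)$. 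Uniqueness of $L$ is immediate because $L$ is read off from $G$ in an invariant way.

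The heart of the argument, and the main obstacle, is the lower inclusion $\eunit(P, L) \leq G$. The tool is the classical ``extraction trick'': given $g \in G$, expand $[g, e_\alpha(\xi)] \in G$ as a product of elementary unitary transvections with one distinguished leading entry, then conjugate repeatedly by elementary generators supported on disjoint hyperbolic pairs $\hyp(P_i), \hyp(P_j)$ (available because $l \geq 4$) to kill the parasitic factors and isolate a single transvection with parameter in $L$. The transitivity hypothesis $P_i \hookrightarrow P_j^N$ lets one then spread isolated transvections to every index, generating all of $\eunit(P, L)$. The technical complication is that $\End(P_0)$ is noncommutative and potentially large; one resolves this by localizing and completing at each maximal ideal of $K$, staying inside a manageable ring via quasi-finiteness, and then using the bound ``fewer than $4l^2$ generators of $\End(P_0)$'' in a dimension count to guarantee enough independent hyperbolic pairs are available to realize arbitrary $\End(P_0)$-valued parameters via elementary commutators. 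Patching the local extractions back together yields $\eunit(P, L) \leq G$ globally, and the converse normalization statement, that any group between $\eunit(P, L)$ and $\gunit(P, \lfloor L \rfloor)$ is normalized by $\eunit(P)$, is immediate from the defining property of $\gunit(P, \lfloor L \rfloor)$ as the normalizer-type group at level $L$.
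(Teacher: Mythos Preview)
You have the logical structure inverted. In the paper the level $L(G)$ is defined (Lemma~\ref{GroupLevel}) so that an elementary transvection $\tau_{i,j}(a)$ or $\tau_i(h)$ lies in $G$ precisely when its parameter lies in the level; with this definition the lower inclusion $\eunit(P,L) \leq G$ is immediate, since $\eunit(L) \leq G$ by construction and $\eunit(P,L) = {}^{\eunit(P)}\eunit(L) \leq G$ because $\eunit(P)$ normalizes $G$. The substantive direction is the upper bound $G \leq \gunit(P,\lfloor L\rfloor)$, and your claim that it is ``essentially tautological'' is the gap: knowing that the coordinates of $[g, e_\alpha(\xi)]$ generate the level is far from knowing that the commutator itself lies in $\eunit(P,L)$, since a priori $[g, e_\alpha(\xi)]$ need not be a product of elementary transvections at all.

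The paper proves the upper bound by contradiction (Theorem~\ref{Classification}). If $g \in G \setminus \gunit(P,\lfloor L\rfloor)$, one reduces to $\herm(K)$ Noetherian, localizes at a maximal ideal (using quasi-finiteness), and applies the local extraction machinery of Proposition~\ref{LocalExtraction} via the rules EX1--EX4 to produce from $g$ a \emph{nontrivial} transvection, meaning one whose parameter lies \emph{outside} $L$. Lifted back, this transvection belongs to $G$ but not to $\eunit(P,L)$, contradicting the definition of $L(G)$. So extraction is aimed at enlarging the level, not at populating $G$ with level-$L$ transvections; the ingredients you list (localization, the $4l^2$ bound on $e_0 C e_0$, the supply of $l\geq 4$ hyperbolic pairs) are the right ones but they serve this opposite purpose through Proposition~\ref{LocalSplitting} and Lemmas~\ref{ParabolicExtraction}--\ref{SemisimpleExtraction}. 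Finally, the converse normalization statement is not a definitional triviality either: it rests on the commutant formula $[\gunit'(P,L), \eunit(P,\widehat L)] \subseteq \eunit(P,L)$ of Theorem~\ref{CommutantFormula}, which requires its own localization argument.
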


In sections \(7\)--\(9\) we will give the definitions of elementary unitary groups, levels and level subgroups. We will also prove their simplest properties. These lemmas are interested not only because of the main theorem, but also since they may help to prove other results about unitary groups, such as lower \(K\)-theory.

The proof of the sandwich classification theorem occupies sections \(10\)--\(12\). The main techniques are the absolute Noetherian reduction and the extraction of transvections taken and modified from paper \cite{OddPreusser}. The sufficient information about inductive limits used (i.\,e. about quasi-finite algebras over commutative rings) can be found in \cite{NonabelianBak}. Further development of these ideas is in \cite{BakHazratVav}, including the localization-completion method applied to unitary \(K\)-theory.

In the final section we will derive several known classification theorems. These theorems include the sandwich classification of overgoups of symplectic and split orthogonal groups in the general linear group, and also the classification of normal subgroups of split orthogonal and symplectic groups.

The author wants to express his gratitude to his advisor Nikolai Vavilov and also to Antony Bak for ideas that lead to the definition of quadratic forms on bimodules.

The research is supported by «Native towns», a social investment program of PJSC «Gazprom Neft».

\section{General notions}

We will assume that all ring to be considered are associative with unity. For an arbitrary ring \(R\) let \(R^*\) be the group of invertible elements in \(R\), \(R^\bullet\) --- the multiplicative monoid of \(R\) (i.\,e. the set \(R\) with multiplication and unity), \(\Cent(R)\) --- the center of \(R\). The center of a group \(G\) will also be denoted as \(\Cent(G)\). If \(H \leq G\) a subgroup, then \(\Norm_G(H)\) and \(\Cent_G(H)\) are the corresponding normalizer and centralizer. For a commutative ring \(K\) with involution \(k \mapsto \inv k\) let \(\herm(K) = \{k \in K \mid k = \inv k\}\) be the subring of symmetric elements.

Recall the standard group theoritic identities \([xy, z] = {^x[}y, z] \, [x, z]\) and \([x, yz] = [x, y] \, {^y[}x, z]\), which will be used further. If \(G\) is a group and \(F, H \leq G\) are subgroups, then \([F, H]\) and \({^F H}\) are the subgroups of \(G\) generated by elements \([f, h]\) and \({^fh}\) for all \(f \in F\) and \(h \in H\). Clearly, \({^FH} = [F, H] H\).

We will also use the language of bicategories. Let us sketch one of possible definitions of these objects. A bicategory \(B\) is a set of its objects \(\mathrm{Ob}(\mathcal B)\) such that for all \(X, Y \in \mathrm{Ob}(\mathcal B)\) a category \(\mathcal B(X, Y)\) is given. These categories are called categories of morphisms from \(X\) to \(Y\), morphisms in them are called \(2\)-morphisms. Moreover, there are composition polyfunctors \(\otimes \colon \mathcal B(X_{n - 1}, X_n) \times \ldots \times \mathcal B(X_0, X_1) \rar \mathcal B(X_0, X_n)\) in order to multiply morphisms for all \(n \geq 0\) and \(X_0, \ldots, X_n \in \mathrm{Ob}(\mathcal B)\). Finally, these polyfunctors are coherent up to certain natural isomorphisms that are also coherent through some identitites. The \(0\)-ary compositions (i.\,e. the identity morphisms) are also denoted by \(I_X \in \mathcal B(X, X)\).

Let \(\bim_K\) be the bicategory of \(K\)-algebras and bimodules, where \(K\) is a commutative ring. Objects in \(\bim_K\) are \(K\)-algebras, morphisms from \(R\) to \(S\) are those \(S\)-\(R\)-bimodules for which left and right \(K\)-module structures coincide, \(2\)-morphisms are bimodule homomorphisms. The composition of bimodules is given by the tensor product and neutral morphisms are the bimodules \(\sub R{R_R}\).

An adjunction in a bicategory \(\mathcal B\) is a pair of morphisms \(f \in \mathcal B(Y, X)\) and \(g \in \mathcal B(X, Y)\) with \(2\)-morphisms \(\eps \colon f \otimes g \rar I_X\) and \(\eta \colon I_Y \rar g \otimes f\) such that \(\eps\) and \(\eta\) satisfy the ordinary unit and counit equations of an adjunction. An adjunction is called an adjoint equivalence if \(\eps\) and \(\eta\) are isomorphisms.

\section{Hermitian bimodules}

Most part of this section is taken from books \cite{Bak} and \cite{Knus}.

A pseudo-involution with symmetry \(\lambda \in R^*\) on a ring \(R\) is a map \(\inv{(-)} \colon R \rar R, a \mapsto \inv a\) such that \(\inv{r + r'} = \inv r + \inv{r'}\), \(\inv 1 = 1\), \(\inv{rr'} = \inv{r'} \inv r\), and \(\inv{\inv r} = \lambda r \inv \lambda\). It is easy to see that in this case \(\inv \lambda = \lambda^{-1}\). If \(\lambda = 1\), then we obtain an involution in the usual sense. If \(R\) is a \(K\)-algebra for commutative ring \(K\) with involution, then we also will assume that \(f(\inv k) = \inv{f(k)}\), where \(f \colon K \rar R\) is the structure homomorphism.

Suppose that \(K\) is a commutative ring with involution. By \(\ibim_K\) we denote the bicategory of \(K\)-algebras with pseudo-involutions and bimodules over them. We omit the index \(K\) in the case \(K = \mathbb Z\).

Let \(R\) and \(S\) be rings with pseudo-involutions and \(\sub S{M_R}\) be a bimodule. One can construct a bimodule \(\sub R{\inv M_S} = \{\inv m \mid m \in M\}\) with operations \(\inv m + \inv{m'} = \inv{m + m'}, \inv r \inv m \inv s = \inv{smr}\). Clearly, there exists coherent isomorphisms
\begin{align*}
\inv{\sub{R_n} {M_n}_{R_{n - 1}} \underset{R_{n - 1}}\otimes \ldots 
\underset{R_1}\otimes \sub{R_1} {M_1}_{R_0}} &\cong \inv{M_1} \underset{R_1}\otimes \ldots \underset{R_{n - 1}}\otimes M_n,\\
\inv{m_n \otimes \ldots \otimes m_1} &\mapsto \inv{m_1} \otimes \ldots \otimes \inv{m_n},
\end{align*}
and they form a contravariant bifunctor \(\ibim_K \rar \ibim_K\) for all \(K\) (which is an identity on objects). In this case we also have isomorphisms \(\inv{\Hom_R(\sub S{M_R}, \sub T{N_R})} \cong \sub R{\Hom(\inv M, \inv N)}\), \(\inv{\sub R{\Hom(\sub R{M_S}, \sub R{N_T})}} \cong \Hom_R(\inv M, \inv N)\). Finally, it is easy to see that there are natural isomorphisms \(\inv{\inv{\sub S{M_R}}} \cong \sub S{M_R}, \inv{\inv m} \mapsto \lambda_S m \inv{\lambda_R}\).

A sesquilinear form \(B\) on a bimodule \(\sub S{M_R}\) is a biadditive map \(B \colon M \times M \rar R\) such that \(B(mr, m'r') = \inv r B(m, m') r'\) and \(B(sm, m') = B(m, \inv sm')\). Equivalently, \(B\) is a bimodule homomorphism \(\inv M \otimes_S M \rar R\). The form \(B\) is called nondegenerate (or regular) if \(M_R\) is finitely generated projective and \(B\) induces a bimodule isomorphism \(\widehat B \colon \inv M \cong \Hom_R(M, R)\).

A sesquilinear form \(B\) is called hermitian if the composition \(\inv M \otimes_S M \cong \inv{\inv M \otimes_S M} \stackrel{\inv B}\rar \inv R \cong R\) equals \(B\). In the language of elements it means that \(B(m, \lambda_S m') = \inv{B(m', m)} \lambda_R\). Such a pair \((M, B)\) will be called a hermitian bimodule (and a hermitian space if \(B\) is nondegenerate).

Let \((\sub S{{M_1}_R}, B_{M_1})\), \ldots, \((\sub S{{M_n}_R}, B_{M_n})\) be hermitian bimodules. The orthogonal sum of them is the hermitian bimodule \(\bigperp_{i = 1}^n (M_i, B_{M_i}) = (\bigoplus_{i = 1}^n M_i, \bigperp_{i = 1}^n B_{M_i})\), where \((\bigperp_{i = 1}^n B_{M_i})((m_i)_{i = 1}^n, (m'_i)_{i = 1}^n) = \sum_{i = 1}^n B_{M_i}(m_i, m'_i)\). It is easy to see that the orthogonal sum is a monoidal operation and the zero bimodule is a neutral element. If \(B_{M_i}\) are nondegenerate, then their orthogonal sum is also nondegenerate.

The tensor product of hermitian bimodules \((\sub{R_0}{M_1}_{R_1}, B_{M_1})\), \ldots, \((\sub{R_{n - 1}}{M_n}_{R_n}, B_{M_n})\) is the hermitian bimodule \(\bigotimes_{i = 1}^n (\sub{R_{i - 1}}{M_i}_{R_i}, B_{M_i}) = (\bigotimes_{i = 1}^n M_i, \bigotimes_{i = 1}^n B_{M_i})\), where \((\bigotimes_{i = 1}^n B_{M_i})(\bigotimes_{i = 1}^n m_i, \bigotimes_{i = 1}^n m'_i) = B_{M_n}(m_n, \ldots B_{M_1}(m_1, m'_1)\ldots m'_n)\) (and the form is biadditive). These tensor products are coherent with neutral elements \((R, B_1)\), where \(B_1(r, r') = \inv rr'\). Hence one can construct the bicategory \(\hbim_K\) of \(K\)-algebras with pseudo-involutions, hermitian bimodules and isometries (a homomorphism \(f \colon M \rar N\) is called an isometry if \(B_N(f(m), f(m')) = B_M(m, m')\)).

Recall that all adjunctions in \(\bim_K\) are of type \((\sub R{P^\vee}_S, \sub S {P_R})\) up to isomorphisms, where \(\eps \colon P^\vee \otimes_S P \rar R\) and \(\eta \colon S \rar \End_R(P) \cong P \otimes_R P^\vee\) are the canonical homomorphisms, \(P_R\) is finitely generated projective and \(P^\vee = \Hom_R(P, R)\). The adjunction is an adjoint equivalence (or Morita equivalence) iff \(P_R\) is fully projective (i.\,e. \(P\) is finitely generated projective and \(R\) is a direct summand of \(P^n\) for \(n\) big enough) and \(S \cong \End_R(P)\).

\begin{lemma}\label{HermitianSpaceProduct}
Let \((\sub T{M_S}, B_M)\) and \((\sub S{N_R}, B_N)\) be hermitian bimodules, \(M_S\) and \(N_R\) are finitely generated projective. If \(B_M\) and \(B_N\) are nondegenerate, then \(B_M \otimes B_N\) is nondegenerate. Conversely, if \(B_M \otimes B_N\) is nondegenerate, \(M_S\) is fully projective, \(N_R\) is fully projective and \(S \cong \End_R(N)\), then \(B_M\) and \(B_N\) are nongenerate.
\end{lemma}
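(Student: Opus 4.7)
My plan is to reduce both directions to identifying \(\widehat{B_M \otimes B_N}\) with the tensor \(\widehat{B_N} \otimes_S \widehat{B_M}\), and then to exploit the faithful flatness that falls out of the Morita-type hypotheses.

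First I would assemble the key commutative square. Using the canonical isomorphism \(\inv{M \otimes_S N} \cong \inv N \otimes_S \inv M\) on the source and, on the target, the tensor-Hom adjunction \(\Hom_R(M \otimes_S N, R) \cong \Hom_S(M, \Hom_R(N, R))\) followed by the dual-basis isomorphism \(\Hom_S(M, X) \cong X \otimes_S M^\vee\) (valid because \(M_S\) is finitely generated projective), I would identify \(\widehat{B_M \otimes B_N}\) with the tensor \(\widehat{B_N} \otimes_S \widehat{B_M} \colon \inv N \otimes_S \inv M \to N^\vee \otimes_S M^\vee\). Commutativity is a routine check on decomposable tensors \(\inv n \otimes \inv m\) using the explicit formula for \(B_M \otimes B_N\). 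The forward direction is then immediate: if both \(\widehat{B_M}\) and \(\widehat{B_N}\) are isomorphisms, so is their tensor, hence so is \(\widehat{B_M \otimes B_N}\).

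For the converse, the hypotheses \(N_R\) fully projective and \(S \cong \End_R(N)\) provide a Morita equivalence, under which \(N\), \(N^\vee\), and \(\inv N\) become progenerators on the appropriate sides and hence faithfully flat; likewise \(M_S\) fully projective makes \(\inv M\) faithfully flat as a left \(S\)-module. Writing
\[
\widehat{B_N} \otimes_S \widehat{B_M} = (\id_{N^\vee} \otimes_S \widehat{B_M}) \circ (\widehat{B_N} \otimes_S \id_{\inv M}),
\]
faithful flatness yields: the outer factor is an isomorphism iff \(\widehat{B_M}\) is, and the inner factor is an isomorphism iff \(\widehat{B_N}\) is. The main obstacle is therefore to split the known isomorphism of the composition into two individual isomorphisms, since a direct composition argument only gives monicity of one factor and epicity of the other. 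To break this deadlock I would apply the Morita equivalence \(N \otimes_R (-)\) to \(\widehat{B_N}\) itself, converting it into a map \(\psi \colon N \otimes_R \inv N \to S\); then, using the factorization \(B_{M \otimes N} = B_N \circ (\id_{\inv N} \otimes_S B_M \otimes_S \id_N)\) together with the fact that \((M \otimes_S N)_R\) is itself a progenerator (so its evaluation pairing \((M \otimes_S N)^\vee \otimes_T (M \otimes_S N) \to R\) is an isomorphism), one first isolates \(\widehat{B_N}\) as an isomorphism. The composition identity above, combined with faithful flatness of \(\inv M\) over \(S\), then delivers \(\widehat{B_M}\) as well.
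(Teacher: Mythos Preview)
Your forward direction is correct and coincides with the paper's argument.

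For the converse you overcomplicate matters and introduce a gap. The paper's trick is simply to write the tensor map in \emph{both} orders:
\[
\widehat{B_N}\otimes_S\widehat{B_M}
=(\widehat{B_N}\otimes\id_{M^\vee})\circ(\id_{\inv N}\otimes\widehat{B_M})
=(\id_{N^\vee}\otimes\widehat{B_M})\circ(\widehat{B_N}\otimes\id_{\inv M}).
\]
From the first factorization, the composite being an isomorphism forces \(\id_{\inv N}\otimes\widehat{B_M}\) to be injective and \(\widehat{B_N}\otimes\id_{M^\vee}\) to be surjective; from the second, \(\widehat{B_N}\otimes\id_{\inv M}\) is injective and \(\id_{N^\vee}\otimes\widehat{B_M}\) is surjective. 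Now faithful flatness of \(\inv N_S\), \({}_S M^\vee\), \({}_S\inv M\), and \(N^\vee_S\) (all four hold under the stated hypotheses) gives that \(\widehat{B_M}\) and \(\widehat{B_N}\) are each both injective and surjective. No Morita gymnastics are needed beyond establishing those four faithful flatnesses.

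Your proposed workaround has an actual gap: you assert that the evaluation pairing \((M\otimes_S N)^\vee\otimes_T(M\otimes_S N)\to R\) is an isomorphism because \((M\otimes_S N)_R\) is a progenerator. But a progenerator \(P_R\) only gives an isomorphism \(P^\vee\otimes_{\End_R(P)}P\to R\); nothing in the hypotheses forces \(T\cong\End_R(M\otimes_S N)\), so tensoring over \(T\) need not yield an isomorphism. The rest of that paragraph is too vague to assess, but in any case it is unnecessary once you use the second factorization.
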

\begin{proof}
Clearly, \((M \otimes_S N)_R\) is finitely generated projective. Note that \(\widehat{B_M \otimes B_N}\) can be expressed as the composition
\begin{align*}
\inv{M \otimes_S N} &\cong \inv N \otimes_S \inv M \stackrel{\widehat B_N \otimes \widehat B_M}\rar \Hom_R(N, R) \otimes_S \Hom_S(M, S) \cong\\
&\cong \Hom_S(M, \Hom_R(N, R)) \cong \Hom_R(M \otimes_S N, R),
\end{align*}
and the first claim follows.

To prove the second claim note that \(\sub S{\inv M}\), \(\sub S{(\Hom_S(M, S))}\), \(\inv N_S\) and \(\Hom_R(N, R)_S\) are faithfully flat. Since the isomorphism \(\widehat B_N \otimes \widehat B_M\) can be presented as \(\inv N \otimes_S \inv M \stackrel{\id \otimes \widehat B_M}\rar \inv N \otimes_S \Hom_S(M, S) \stackrel{\widehat B_N \otimes \id}\rar \Hom_R(N, R) \otimes_S \Hom_S(M, S)\) and as \(\inv N \otimes_S \inv M \stackrel{\widehat B_N \otimes \id}\rar \Hom_R(N, R) \otimes_S \inv M \stackrel{\id \otimes \widehat B_M}\rar \Hom_R(N, R) \otimes_S \Hom_S(M, S)\), both \(\widehat B_M\) and \(\widehat B_N\) are injective and surjective.
\end{proof}

\begin{lemma}\label{HermitianMorita}
Let \(P_R\) be a finitely generated projective module over a ring with pseudo-involution, \(B_P \colon P \times P \rar R\) is a nondegenerate sesquilinear form (if we consider \(P\) as the bimodule \(\sub K{P_R}\)). Then \(S = \End_R(P)\) has unique pseudo-involition such that \(B\) is hermitian, in this case \((P^\vee, P)\) is an adjunction in \(\hbim_K\) (the hermitian form on \(P^\vee = \Hom_R(P, R)\) is uniquely determined). All adjoint equivalences in \(\hbim_K\) are of this type for fully projective \(P\) (up to unique isomorphism).
\end{lemma}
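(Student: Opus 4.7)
The plan is to first construct the pseudo-involution on \(S = \End_R(P)\) from the data of \(B_P\), then verify the axioms, and finally lift the standard adjunction \((P^\vee, P)\) of \(\bim_K\) to one in \(\hbim_K\).

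For the pseudo-involution, I define an auxiliary anti-homomorphism \(\sigma \colon S \to S\) by the equation \(B_P(\sigma(s)m, m') = B_P(m, sm')\). For each \(s \in S\) and \(m \in P\), the map \(m' \mapsto B_P(m, sm')\) is right-\(R\)-linear and hence lies in \(P^\vee\), so the iso \(\widehat{B_P} \colon \inv P \to P^\vee\) supplies the required \(\sigma(s) m \in P\); right-\(R\)-linearity of \(\sigma(s)\) in \(m\) follows from the nondegeneracy of \(B_P\), placing \(\sigma(s) \in S\). To produce the symmetry \(\lambda_S\), I observe that \(\tilde B_P(m, m') := \inv{B_P(m', m)} \lambda_R\) is another nondegenerate sesquilinear form on \(P\); the same nondegeneracy argument yields a unique \(\lambda_S \in S^*\) with \(\tilde B_P(m, m') = B_P(m, \lambda_S m')\), giving the hermitian condition by construction. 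A direct manipulation using the resulting \(\lambda_S\)-isometry \(B_P(\lambda_S m, \lambda_S m') = B_P(m, m')\) shows \(\sigma^2(s) = \lambda_S^{-1} s \lambda_S\), so in particular \(\sigma\) is bijective. I then set the pseudo-involution to be \(\inv s := \sigma^{-1}(s)\); this satisfies \(B_P(sm, m') = B_P(m, \inv s m')\) together with \(\inv{\inv s} = \lambda_S s \lambda_S^{-1}\) and \(\inv{\lambda_S} = \lambda_S^{-1}\). Uniqueness of the pseudo-involution is immediate, since any pseudo-involution making \(B_P\) hermitian must satisfy these defining equations and so agrees with the constructed one.

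For the adjunction \((P^\vee, P)\) in \(\hbim_K\), I take the standard counit \(\eps(\phi \otimes m) = \phi(m)\) and unit \(\eta\) from \(\bim_K\) (which already exist since \(P_R\) is finitely generated projective), and equip \(P^\vee\) with the hermitian form \(B_{P^\vee}\) whose action is forced by requiring \(\eps\) to be an isometry: \(B_P(m, B_{P^\vee}(\phi, \phi') m') = \inv{\phi(m)} \phi'(m')\). Writing \(\phi = \widehat{B_P}(\inv n)\) and \(\phi' = \widehat{B_P}(\inv{n'})\) and using the hermitian relation on \(B_P\), the right-hand side becomes \(B_P(m, \lambda_S n \cdot \lambda_R^{-1} B_P(n', m'))\), so nondegeneracy of \(B_P\) in its second argument uniquely identifies \(B_{P^\vee}(\phi, \phi') \in S\) as the right-\(R\)-linear map \(m' \mapsto \lambda_S n \cdot \lambda_R^{-1} B_P(n', m')\). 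An analogous calculation shows \(\eta\) is an isometry, and uniqueness of \(B_{P^\vee}\) is inherent to the definition. For the converse, any adjoint equivalence in \(\hbim_K\) restricts to one in \(\bim_K\), hence by the Morita-type classification recalled just before the lemma takes the form \((P^\vee, P)\) with \(P_R\) fully projective and \(S \cong \End_R(P)\); the hermitian form on \(P\) is then a nondegenerate sesquilinear form, and the first part of the lemma identifies the pseudo-involution uniquely.

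The main obstacle is the computation \(\sigma^2(s) = \lambda_S^{-1} s \lambda_S\). Starting from \(B_P(\sigma^2(s) m, m') = B_P(m, \sigma(s) m')\), one applies the rearranged hermitian relation \(B_P(m, v) = \inv{B_P(\lambda_S^{-1} v, m)} \lambda_R\), then the \(\lambda_S\)-isometry \(B_P(\lambda_S^{-1} w, m) = B_P(w, \lambda_S m)\) and the defining equation of \(\sigma\) to reach \(\inv{B_P(m', s\lambda_S m)} \lambda_R\), and finally the hermitian relation and the \(\lambda_S\)-isometry once more to arrive at \(B_P(\lambda_S^{-1} s \lambda_S m, m')\); nondegeneracy of \(B_P\) in its first argument then peels off the common factor. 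That this yields conjugation by \(\lambda_S^{-1}\) rather than \(\lambda_S\) is precisely what dictates the choice \(\inv s = \sigma^{-1}(s)\), in order to match the paper's convention \(\inv{\inv s} = \lambda_S s \inv{\lambda_S}\).
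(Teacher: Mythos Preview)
Your argument follows essentially the same route as the paper: define the adjoint anti-endomorphism and the symmetry \(\lambda_S\) via nondegeneracy of \(B_P\), verify the pseudo-involution axioms (your \(\sigma^{-1}\) is exactly the paper's \(\inv{(-)}\)), and then equip \(P^\vee\) with the hermitian form forced by requiring \(\eps\) to be an isometry, which matches the paper's explicit description \(B_{P^\vee}\bigl((p' \mapsto B_P(p\lambda_R,\lambda_S p')),\,q\bigr) = p \otimes q\).

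There is one genuine gap in your converse. After observing that an adjoint equivalence in \(\hbim_K\) restricts to one in \(\bim_K\), you assert that ``the hermitian form on \(P\) is then a nondegenerate sesquilinear form'', but nothing you have said so far implies this: the Morita classification in \(\bim_K\) only gives full projectivity of \(P_R\) and \(S \cong \End_R(P)\), not nondegeneracy of \(B_P\). The paper closes this by invoking Lemma~\ref{HermitianSpaceProduct}: since \(\eps\) and \(\eta\) are isometry isomorphisms onto \((R,B_1)\) and \((S,B_1)\), the tensor forms \(B_{P^\vee}\otimes B_P\) and \(B_P\otimes B_{P^\vee}\) are nondegenerate, and the converse part of that lemma (using full projectivity) then forces \(B_P\) and \(B_{P^\vee}\) individually to be nondegenerate. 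Without this step, you cannot feed \(P\) back into the first half of the lemma.
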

\begin{proof}
Let us construct the pseudo-involition on \(S\). The necessary ans sufficient conditions for \(B_P\) to be a hermitian form are equalities \(B_P(sp, p') = B_P(p, \inv s p')\) and \(B_P(p, \lambda_S p') = \inv{B_P(p', p)} \lambda_R\) for all \(p, p' \in P\). Since \(B_P\) is nondegenerate, for all \(s \in S\) the element \(\inv s\) is uniquely determined from the first equality. It is easy to see that \(s \mapsto \inv s\) is an antihomomorphism of \(K\)-algebras and is consistent with the involution on \(K\). Similarly, the element \(\lambda_S \in S^*\) can be found from the second equality. It remains to prove that this is a pseudo-involution.

From equalities \(B_P(p', \lambda_S sp) = \inv{B_P(sp, p')} \lambda_R = \inv{B_P(p, \inv s p')} \lambda_R = B_P(\inv s p', \lambda_S p)\) it follows that \(\inv{\inv s} \lambda_S = \lambda_S s\). Besides, equalities \(B_P(p, p') = \inv{\inv{B_P(p, p')} \lambda_R} \lambda_R = \inv{B_P(p', \lambda_S p)} \lambda_R = B_P(\lambda_S p, \lambda_S p') = B_P(p, \inv{\lambda_S} \lambda_S p')\) imply \(\inv{\lambda_S} = \lambda_S^{-1}\), hence \(S\) has unique pseudo-involution.

In order to obtain an adjunction, it remains to construct an appropriate hermitian form on \(P^\vee\). Let
\[B_{P^\vee} \colon \inv{P^\vee} \otimes_R P^\vee \cong \sub R{\Hom(\inv P, R)} \otimes_R P^\vee \cong P \otimes_R P^\vee \cong S,\]
i.\,e. \(B_{P^\vee}((p' \mapsto B_P(p \lambda_R, \lambda_S p')), q) = p \otimes q\) in the language of elements. A simple check shows that this is a hermitian form and the pair \((P^\vee, P)\) is an adjunction (where \(\eps \colon P^\vee \otimes_S P \rar R\) and \(\eta \colon S \cong P \otimes_R P^\vee\) are canonical). The uniqueness of such a form on \(P^\vee\) follows from the general uniqueness theorem for an adjoint morphism to a given one.

If \((Q, P)\) is an adjoint equivalence in \(\hbim_K\), then it is also an adjoint equivalence in \(\bim_K\), hence by lemma \ref{HermitianSpaceProduct} both \(P\) and \(Q\) are hermitian spaces. Finally, \(Q \cong P^\vee\) by the uniqueness theorem for an adjoint morphism to a given one.
\end{proof}

Note that lemma \ref{HermitianSpaceProduct} implies that nondegeneracy of hermitian forms is preserved under adjoint equivalences.

\section{Quadratic structures}

Before we define quadratic forms on hermitian bimodules, we should define the objects where quadratic forms maps to. As a motivating example, let us consider the module \(\mathbb F_2^n\) (as a \(\mathbb Z\)-\(\mathbb F_2\)-bimodule) and the quadratic form \(q(b_1, \ldots, b_n) = \sum_{i = 1}^n b_i^2\) on it. In turns out that the values of this form can be considered as elements of \(\mathbb Z / 4 \mathbb Z\) instead of \(\mathbb F_2\), which has application in the coding theory in the definition of doubly even codes.

Let \(R\) be a ring with pseudo-involution. The monoid \(R^\bullet\) acts of \(R\) from the right by the formula \(r \cdot r' = \inv{r'} rr'\). A quadratic structure on \(R\) is a triple \((A_R, \phi, \tr)\), where \(A_R\) is a right \(R^\bullet\)-module and \(\phi \colon R \rar A_R\), \(\tr \colon A_R \rar R\) are \(R^\bullet\)-module homomorphisms that satisfy the following axioms:
\begin{enumerate} % quadratic structure
\item[QS1.] \(\phi(r) = \phi(\inv r \lambda_R)\),
\item[QS2.] \(\tr(\phi(r)) = r + \inv r \lambda_R\),
\item[QS3.] \(\tr(a) = \inv{\tr(a)} \lambda_R\),
\item[QS4.] \(a \cdot (r + r') = a \cdot r + \phi(\inv{r'} \tr(a) r) + a \cdot r'\).
\end{enumerate}

Note that the axiom QS4 is symmetric with respect to \(r\) and \(r'\) under assumptions QS1 and QS3. Besides, axioms QS3 and QS4 follows from QS1 and QS2 for \(a\) in the image of \(\phi\). The axioms imply the useful equality \(\phi(\tr(a)) = a + a \cdot (-1)\).

Below we will be interested not in ordinary rings with pseudo-involutions, but in algebras over a fixed commutative ring \(K\) (also with involution). Hence we need to generalize the notion of algebra for quadratic structures.

A quadratic ring is a commutative ring \(K\) with involution and quadratic structure \(A_K\) such that \(A_K\) itself is a commutative ring (with the additive structure from the \(K^\bullet\)-module structure) satisfying additional axioms
\begin{enumerate} % quadratic ring
\item[QR1.] \(a(a' \cdot k) = aa' \cdot k\),
\item[QR2.] \(a \phi(k) = \phi(\tr(a)k)\),
\item[QR3.] \(\tr(1) = 1\),
\item[QR4.] \(\tr(aa') = \tr(a)\tr(a')\).
\end{enumerate}

Finally, a quadratic algebra over a quadratic ring \((K, A_K)\) is a \(K\)-algebra with pseudo-involution and quadratic structure \((R, A_R)\) such that \(A_R\) is a left \(A_K\)-module (with the same addition as in the \(R^\bullet\)-module structure) satisfying the axioms
\begin{enumerate} % quadratic algebra
\item[QA1.] \(a_K (a_R \cdot r) = a_K a_R \cdot r\),
\item[QA2.] \((a_K \cdot k) a_R = a_K a_R \cdot k\),
\item[QA3.] \(a_K \phi_R(r) = \phi_R(\tr_K(a_K) r)\),
\item[QA4.] \(\phi_K(k) a_R = \phi_R(k \tr_R(a_R))\),
\item[QA5.] \(\tr_R(a_K a_R) = \tr_K(a_K) \tr_R(a_R)\).
\end{enumerate}

Morphisms between quadratic rings are defined in a natural way: morphism \((K, A_K) \rar (L, A_L)\) is a pair of ring homomorphisms \((f \colon K \rar L, g \colon A_K \rar A_L)\) such that \(g(a_K \cdot k) = g(a_K) \cdot f(k)\), \(g(\phi_K(k)) = \phi_L(f(k))\) and \(\tr_L(g(a_K)) = f(\tr_K(a_K))\). Clearly, such a morphism makes \((L, A_L)\) a quadratic algebra over \((K, A_K)\) (in particular, any quadratic ring is a quadratic algebra over itself). Conversely, if \((L, A_L)\) is a quadratic ring and simultaneously a quadratic algebra over \((K, A_K)\), and \(a_K(a_La'_L) = (a_Ka_L)a'_L\) is satisfied, then there exists unique morphism \((K, A_K) \rar (L, A_L)\) that induces this quadratic algebra structure. Moreover, any quadratic algebra over \((L, A_L)\) becomes a quadratic algebra over \((K, A_K)\) through a morphism \((K, A_K) \rar (L, A_L)\) (this is exactly the restriction of scalars).

Let \(K\) be a commutative ring with involution. It turns out that \(K\) has the universal quadratic structure making it a quadratic algebra (more precisely, the initial quadratic structure in the category of all quadratic structures with this property), and all \(K\)-algebras with quadratic structures becomes quadratic algebras over it. Let the Heisenberg group of \(K\) be the set \(\Heis(K) = K \times K\) with the addition operation \((x, y) \dotplus (x', y') = (x + x', y + y' - \inv x x')\) (in general this operation is noncommutative, but it is always a group operation with inverses \(\dotminus (x, y) = (-x, -y - \inv xx)\)). This group has a right action of \(K^\bullet\) given by the formula \((x, y) \cdot k = (xk, \inv kyk)\) and also the multiplication \((x, y) \dottimes (x', y') = (xx', \inv xy'x + \inv{x'}yx' + yy' + \inv yy')\), which is associative with neutral element \(\dot 1 = (1, 0)\). Finally, there are group homomorphisms \(\phi \colon K \rar \Heis(K), k \mapsto (0, k)\) and \(\tr \colon \Heis(K) \rar K, (x, y) \mapsto \inv xx + y + \inv y\).

\begin{lemma}\label{UniversalQuadraticRing}
The group \(A_K = \Heis(K)^{\mathrm{ab}} = \Heis(K) / \{(0, k - \inv k)\}\) makes \(K\) a quadratic ring (all operations defined above are well-defined on \(A_K\)). The pair \((K, A_K)\) is the universal quadratic algebra for \(K\) fixed. All algebras over \(K\) with quadratic structures have unique quadratic algebra structure over \((K, A_K)\).
\end{lemma}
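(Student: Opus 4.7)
The plan is to proceed in three stages.

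\textbf{Stage 1: $A_K$ is a commutative ring.} A direct calculation of the commutator in $(\Heis(K), \dotplus)$ yields $[(x,y),(x',y')] = (0, \inv{x'}x - \inv x x')$, and with $k = \inv{x'}x$ this has the form $(0, k - \inv k)$. Conversely, $(0, k - \inv k) = [(k,0),(1,0)]$. Hence the subgroup $\{(0, k-\inv k)\}$ coincides with the commutator subgroup, and $A_K$ is literally the abelianization of $\Heis(K)$. I would then check that the right $K^\bullet$-action, the multiplication $\dottimes$, and the maps $\phi$, $\tr$ each preserve $\{(0, k-\inv k)\}$ and hence descend to $A_K$, using centrality of $K$ and the fact that the involution is an anti-automorphism. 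Commutativity of $\dottimes$ on $A_K$ follows because $(x,y) \dottimes (x',y') - (x',y') \dottimes (x,y) = (0, \inv y y' - \inv{y'} y)$ has the form $(0, \ell - \inv \ell)$ with $\ell = \inv y y'$. Associativity and distributivity of $\dottimes$ over $\dotplus$ are direct computations.

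\textbf{Stage 2: QS and QR axioms.} QS1 is built into the definition of the quotient. QS2 reads $\tr(\phi(k)) = \tr((0,k)) = k + \inv k$. QS3 is immediate from the self-symmetry of $\inv x x + y + \inv y$ under the involution (with $\lambda_K = 1$). QS4 and QR1--QR4 reduce to elementary manipulations in $K$, again using QS1 to absorb the $(0, k - \inv k)$-terms produced by the Heisenberg correction.

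\textbf{Stage 3: Universal property.} Given any $K$-algebra $R$ with a quadratic structure $(A_R, \phi_R, \tr_R)$, I would first pin down the action uniquely. Unitality together with QA2 forces $(x, 0) \cdot a_R = a_R \cdot x$, QA4 forces $(0, y) \cdot a_R = \phi_R(y \tr_R(a_R))$, and additivity in the first argument combines these to $(x, y) \cdot a_R = a_R \cdot x + \phi_R(y \tr_R(a_R))$. For existence I would verify that this formula defines a map $\Heis(K) \times A_R \rar A_R$ that descends to $A_K$ and satisfies QA1--QA5. The critical step is additivity in the first argument: expanding $a_R \cdot (x + x')$ by QS4 of $A_R$ produces the cross-term $\phi_R(\inv{x'} \tr_R(a_R) x)$, which must combine with the Heisenberg correction $-\phi_R(\inv x x' \tr_R(a_R))$ to yield $\phi_R((\inv{x'}x - \inv x x') \tr_R(a_R)) = 0$; this vanishing follows from QS1 of $A_R$ together with centrality of $K \subset R$ and the hermitian symmetry of $\tr_R(a_R)$ (QS3).

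The main obstacle is the bookkeeping in the last stage: the noncommutativity of $\dotplus$ on $\Heis(K)$, the possible noncommutativity of $R$, and the QS axioms of $A_R$ must be juggled simultaneously, and each of QA1--QA5 demands its own separate calculation. None of the steps are conceptually deep, but they require consistent tracking of conventions (left vs.\ right actions, $\dotplus$ vs.\ $+$, $K^\bullet$-action vs.\ multiplication in $R$) and repeated use of the single mechanism that $\phi_R$ absorbs any $k - \inv k$-term.
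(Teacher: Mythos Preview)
Your proposal is correct and follows essentially the same route as the paper: identify $\{(0,k-\inv k)\}$ as the commutant, verify the ring and QS/QR axioms by direct computation, and for the last claim use that $A_K = \dot 1 \cdot K \dotplus \phi(K)$ so that QA2 and QA4 force the formula $(x,y)\,a_R = a_R \cdot x + \phi_R(y\,\tr_R(a_R))$, whose well-definedness hinges on exactly the QS1/QS3 cancellation you isolate. The paper's own proof is a two-paragraph sketch that omits the computations you spell out, so your outline is strictly more detailed but not different in substance.
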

\begin{proof}
It is easy to see that \(\dottimes\) is distributive over \(\dotplus\) from the left, \(\{(0, k - \inv k)\}\) is the commutant of \(\Heis(K)\), and \(\dottimes\) is well-defined on \(A_K = \Heis(K)^{\mathrm{ab}}\). Moreover, \(\dottimes\) is commutative on \(A_K\), hence \(A_K\) is a commutative ring. Clearly, \(\tr(h \dotplus h') = \tr(h) + \tr(h')\) and \(\tr(h \dottimes h') = \tr(h) \tr(h')\), hence \(\tr \colon A_K \rar K\) is well-defined ring homomorphism. It can be checked directly that \(\phi\) and \(\tr\) are \(K^\bullet\)-linear and \((K, A_K)\) satisfies all quadratic ring axioms. If \(A_R\) is a quadratic structure on a \(K\)-algebra \(R\), then axioms QA2 and QA4 imply that \((R, A_R)\) had at most one quadratic algebra structure on \((K, A_K)\) (because \(A_K = 1 \cdot K \dotplus \phi(K)\)), and one can easily prove that this structure is well-defined. The universality property is obvious.
\end{proof}

As a corollary, any ring \(R\) with pseudo-involution and quadratic structure \(A_R\) is a quadratic algebra over \((\mathbb Z, \Heis(\mathbb Z))\) in a unique way. We have the ring isomorphism \(\Heis(\mathbb Z) \cong \mathbb Z[T] / (T^2 - 2T)\), \((x, y) \mapsto x + (y + \binom x 2) T\).

\section{Quadratic forms}

Now we are able to define quadratic forms. Let \((\sub S{M_R}, B)\) be a hermitian bimodule, \((S, A_S)\) and \((R, A_R)\) be quadratic algebras over a quadratic ring \((K, A_K)\). A map \(q \colon A_S \times M \rar A_R\) is called a quadratic form on \((M, B)\), if
\begin{enumerate} % quadratic form
\item[QF1.] \(q(a_S + a'_S, m) = q(a_S, m) + q(a'_S, m)\),
\item[QF2.] \(q(a_S \cdot s, m) = q(a_S, sm)\),
\item[QF3.] \(q(\phi_S(s), m) = \phi_R(B(m, sm))\),
\item[QF4.] \(q(a_S, mr) = q(a_S, m) \cdot r\),
\item[QF5.] \(q(a_S, m + m') = q(a_S, m) + \phi_R(B(m, \tr_S(a_S) m')) + q(a_S, m')\),
\item[QF6.] \(\tr_R(q(a_S, m)) = B(m, \tr_S(a_S)m)\),
\item[QF7.] \(q(a_Ka_S, m) = a_Kq(a_S, m)\).
\end{enumerate}

The triple \((M, B, q)\) will be called quadratic bimodule (and quadratic space, if \(B\) is nondegenerate). Note that QF7 is always satisfied in case \(A_K = \Heis(K)^{\mathrm{ab}}\). Moreover, the axioms imply the relation \(q(a_S, m) + q(a_S, -m) = \phi_R(B(m, \tr_S(a_S), m))\)

Now let us explain the relation between our definition and odd quadratic forms from paper \cite{OddPetrov}. First of all, we will reformulate our definition in the case of a right hermitian module \((M_R, B)\), where \(R\) is a quadratic algebra over \(K\) and \(B\) is a hermitian form on \(\sub K{M_R}\).

\begin{lemma}\label{RightModuleForm}
There is a canonical one-to-one correspondence between quadratic forms on a right hermitian module \((M_R, B)\) and maps \(\widetilde q \colon M \rar A_R\) such that \(\widetilde q(mr) = \widetilde q(m) \cdot r\), \(\widetilde q(m + m') = \widetilde q(m) + \phi_R(B(m, m')) + \widetilde q(m')\), and \(\tr_R(\widetilde q(m)) = B(m, m)\).
\end{lemma}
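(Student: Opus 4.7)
The plan is to exhibit the bijection via the explicit formulas
\[
\widetilde q(m) := q(1_{A_K}, m), \qquad q(a_K, m) := a_K\, \widetilde q(m),
\]
where the second formula uses the left \(A_K\)-module structure on \(A_R\) built into the quadratic algebra structure of \((R, A_R)\) over \((K, A_K)\). Both formulas make sense in the present setting because, viewing \(M\) as the bimodule \(\sub K{M_R}\), one has \(S = K\) and hence \(A_S = A_K\); the element \(1_{A_K}\) is the multiplicative unit of the ring \(A_K\) supplied by the quadratic ring structure.

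For the forward direction, I specialize axioms QF4, QF5 and QF6 at \(a_S = 1_{A_K}\). Since \(\tr_K(1_{A_K}) = 1\) by QR3, these three axioms collapse directly to the three required properties of \(\widetilde q\). For the backward direction, I verify QF1--QF7 for \(q(a_K, m) := a_K \widetilde q(m)\). Axioms QF1 and QF7 are the additivity and associativity of the \(A_K\)-action on \(A_R\); QF4 is QA1 combined with \(\widetilde q(mr) = \widetilde q(m) \cdot r\); QF2 combines QA2 with the coincidence of the two \(K\)-actions on \(M\), which gives \(\widetilde q(km) = \widetilde q(m) \cdot k\); QF3 uses QA4 together with \(\tr_R(\widetilde q(m)) = B(m, m)\) and sesquilinearity to rewrite \(B(m, km) = k B(m, m)\); and QF5, QF6 follow from QA3, QA5 respectively, using that \(\tr_K(a_K) \in K\) lies in the center of \(R\). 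Mutual inverseness is then immediate from module unitality \(1_{A_K} \cdot \widetilde q(m) = \widetilde q(m)\) and, in the other direction, from \(a_K \cdot 1_{A_K} = a_K\) in \(A_K\) combined with QF7.

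There is no real obstacle in the argument: the substantive content of the lemma is that QF7 together with QR3 forces \(q(a_K, m)\) to be determined by its value at \(a_S = 1_{A_K}\) through the \(A_K\)-action, after which the compatibility axioms QA1--QA5 (plus the fact that \(K\) maps to the center of \(R\) and that the two \(K\)-actions on \(M\) coincide) pair off one-by-one with the axioms QF1--QF6. The only mild subtlety is the repeated use of centrality and of the bimodule compatibility \(km = mk\) to resolve the apparent left/right asymmetry between the original QF-axioms and the three simple conditions on \(\widetilde q\).
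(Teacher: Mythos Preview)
Your proof is correct and follows exactly the same route as the paper's own proof: define \(\widetilde q(m)=q(1,m)\) in one direction and \(q(a_K,m)=a_K\,\widetilde q(m)\) in the other, then check the axioms. The paper merely records these two formulas and declares the verifications easy, whereas you spell out which QA/QR axiom matches each QF axiom, so your argument is simply a fleshed-out version of the same proof.
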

\begin{proof}
Recall that we consider any right module \(M_R\) as the bimodule \(\sub K{M_R}\), where \(K\) is the base commutative ring. It is easy to see that if \(q\) is a quadratic form on the module \(\sub K{M_R}\), then the map \(\widetilde q \colon m \mapsto q(1, m)\) satisfies the required identities. Conversely, any such a map \(\widetilde q\) is obtained in this way from unique form \(q\) given by the formula \(q(a_K, m) = a_K \widetilde q(m)\).
\end{proof}

In particular, this lemma implies that when we work with right modules we can forget about \(K\) and work only with the ring \(R\) with quadratic structure \(A_R\). It turns out that in some sense one can describe all possible structures and quadratic forms for a fixed module \(M_R\).

Recall the notions from paper \cite{OddPetrov}. If \(M_R\) is a module with a hermitian form \(B\), then its Heisenberg group \(\Heis(B)\) is the set \(M \times R\) with the operation \((m, r) \dotplus (m', r') = (m + m', r + r' - B(m, m'))\) (it is a group operation with inverses \(\dotminus (m, r) = (-m, -r - B(m, m))\)). The group \(\Heis(B)\) has a right action of \(R^\bullet\) given by the formula \((m, r) \cdot r' = (mr', \inv{r'}rr')\). Moreover, there are \(R^\bullet\)-linear maps \(\phi \colon R \rar \Heis(B), r \mapsto (0, r)\) and \(\tr \colon \Heis(B) \rar R, (m, r) \mapsto B(m, m) + r + \inv r \lambda_R\). Finally, there is a map \(\widetilde q \colon M \rar \Heis(B), m \mapsto (m, 0)\). A subgroup \(\mathcal L \leq \Heis(B)\) is called an odd form parameter, if it is closed under the action of \(R^\bullet\) and \(\mathcal L_{\min} \leq \mathcal L \leq \mathcal L_{\max}\), where \(\mathcal L_{\min} = \{(0, r - \inv r \lambda_R)\}\), \(\mathcal L_{\max} = \Ker(\tr)\) (it is true that \(\mathcal L_{\min} \leq \mathcal L_{\max}\), these subgroups of \(\Heis(B)\) are closed under the action of \(R^\bullet\), and \(\mathcal L_{\min}\) contains the commutant \(\Heis(B)\)).

\begin{lemma}\label{HeisenbergGroup}
The abelian group \(\Heis(B) / \mathcal L\) is a quadratic structure, a pair \((R, \Heis(B) / \mathcal L)\) is a quadratic algebra over \((K, A_K)\), where \(A_K = \Heis(K)^{\mathrm{ab}}\), and \(\widetilde q\) induces unique quadratic form \(q\) on \(\sub K{M_R}\) such that \(q(1, m) = \widetilde q(m) \in \Heis(B) / \mathcal L\). Conversely, any quadratic form \(q\) on \(\sub K{M_R}\) with a quadratic structure on \(R\) can be obtained in this way (uniquely up to unique isomorphism), if \(A_R\) is generated by the images of \(q\) and \(\phi_R\) as an abelian group.
\end{lemma}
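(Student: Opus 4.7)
\medskip

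My plan is to build the quadratic structure on $A_R := \Heis(B)/\mathcal L$ directly, check the axioms, pull back the $A_K$-action via the universal property from Lemma \ref{UniversalQuadraticRing}, and then set up the converse by taking a kernel.

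First I would show that $\phi$ and $\tr$ descend to $A_R$: the map $\tr$ descends because $\mathcal L \leq \mathcal L_{\max}=\Ker(\tr)$, and the $R^\bullet$-action descends because $\mathcal L$ is $R^\bullet$-stable by hypothesis. Then I verify QS1--QS4 for $(A_R,\phi,\tr)$. QS2 is immediate from the definition of $\tr$ on $\Heis(B)$; QS1 reduces to showing $(0,r)\dotminus(0,\inv r\lambda_R)=(0,r-\inv r\lambda_R)$ lies in $\mathcal L_{\min}\leq\mathcal L$; QS3 uses the hermitian identity $\inv{B(m,m)}\lambda_R=B(m,m)$ together with $\inv{\inv r\lambda_R}\lambda_R=r$. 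The main computation is QS4: for $a=(m,s)$, I would expand $(m,s)\cdot(r+r')$ and $(m,s)\cdot r\dotplus\phi(\inv{r'}\tr(a)r)\dotplus(m,s)\cdot r'$ in $\Heis(B)$ and show that their difference is the sum of two elements of the form $t-\inv t\lambda_R$ (one coming from the $s$-terms, one from $B(m,m)$), hence lies in $\mathcal L_{\min}$. The hermitian identity is used exactly once per pair to flip $\inv{B(m,m)}$ back to $B(m,m)$.

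Next, since $(R,A_R)$ now carries a quadratic structure, Lemma \ref{UniversalQuadraticRing} provides a unique quadratic algebra structure over $(K,\Heis(K)^{\mathrm{ab}})$. I then define $q\colon A_K\times M\to A_R$ by $q(a_K,m):=a_K\,\widetilde q(m)$, where $\widetilde q(m)=(m,0)+\mathcal L$. The uniqueness clause forces this formula via QF7 and $q(1,m)=\widetilde q(m)$. The axioms QF1, QF7 are tautological from $A_K$-linearity; QF2 and QF4 follow from the $R^\bullet$-action on $\Heis(B)$; QF3 from $\widetilde q(sm)=\phi_R(B(m,sm))$ when we unwind $\phi_S$ into its image $(0,s)$ and apply the $A_K$-action; QF5 and QF6 are the definition of $\dotplus$ and of $\tr$ on $\Heis(B)/\mathcal L$ respectively, after reducing to $a_S=1$ by $A_K$-linearity.

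For the converse, given a quadratic form $q$ on $\sub K{M_R}$ with values in any quadratic structure $A'_R$, I would define $\Phi\colon\Heis(B)\to A'_R$ by $\Phi(m,r)=\widetilde q(m)+\phi_R(r)$ where $\widetilde q(m):=q(1,m)$. Axiom QF5 (at $a_S=1$, so $\tr_S(1)=1\cdot 1+\inv 1\cdot 1\cdot \lambda_S$ giving the right cross term after reducing modulo $\phi_R$'s image if needed) shows $\Phi$ is a group homomorphism $\Heis(B)\to A'_R$; QF2 and QF4 give $R^\bullet$-equivariance; QF6 gives $\tr_R\circ\Phi=\tr$, so $\Ker(\Phi)\leq\mathcal L_{\max}$. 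A short check using QS1 and QS3 in $A'_R$ shows $\mathcal L_{\min}\subseteq\Ker(\Phi)$. Setting $\mathcal L=\Ker(\Phi)$ yields an odd form parameter, and the factorization $\overline\Phi\colon A_R=\Heis(B)/\mathcal L\hookrightarrow A'_R$ is an $R^\bullet$-equivariant monomorphism; under the generation hypothesis on $A'_R$ it is surjective and hence an isomorphism of quadratic structures identifying $q$ with the form constructed in the first part. Uniqueness of $\mathcal L$ and of the isomorphism follow from the same generation hypothesis.

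The main obstacle I anticipate is bookkeeping in QS4 and in showing that $\Phi$ is a \emph{group} homomorphism in the converse; both require careful use of the hermitian identity for $B(m,m)$ and of $\inv{(\,\cdot\,)}\lambda_R$-symmetry to recognize differences as elements of $\mathcal L_{\min}$.
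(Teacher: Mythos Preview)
Your approach is correct and is exactly the paper's: the paper's proof merely says ``all claims can be checked directly,'' gives the explicit formula \(q((x,y),m)=\widetilde q(xm)+\phi(B(m,ym))\) (which unwinds to your \(a_K\widetilde q(m)\) via QA2 and QA4), and recovers \(\mathcal L\) as the kernel of your map \(\Phi(m,r)=\widetilde q(m)+\phi_R(r)\). Two small slips to fix when you write it out: the equation \(\widetilde q(sm)=\phi_R(B(m,sm))\) in your QF3 sketch is false---what you actually get is \(\phi_K(s)\,\widetilde q(m)=\phi_R(s\,\tr_R(\widetilde q(m)))=\phi_R(B(m,sm))\) directly from QA4---and for the cross term in the converse you just need \(\tr_K(1)=1\) from QR3, not the formula you wrote.
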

\begin{proof}
All claims can be checked directly. The quadratic form \(q\) can be constructed by the explicit formula \[q((x, y), m) = \widetilde q(xm) + \phi(B(m, ym)).\]
The odd form parameter \(\mathcal L\) can be recovered as the kernel of \(R^\bullet\)-linear map \(\Heis(B) \rar A_R, (m, r) \mapsto \tilde q(1, m) + \phi(r)\), and the same map gives the isomorphism between \(\Heis(B) / \mathcal L\) and \(A_R\).
\end{proof}

If \((M_1, B_{M_1}, q_{M_1})\), \ldots, \((M_n, B_{M_n}, q_{M_n})\) are quadratic bimodules over \((S, A_S)\) and \((R, A_R)\), the their orthogonal sum is the quadratic bimodule \(\bigperp_{i = 1}^n(M_i, B_{M_i}, q_{M_i}) = (\bigoplus_{i = 1}^n M_i, \bigperp_{i = 1}^n B_{M_i}, \bigperp_{i = 1}^n q_{M_i})\), where the quadratic form is given by the formula \((\bigperp_{i = 1}^n q_{M_i})(a_S, (m_i)_{i = 1}^n) = \sum_{i = 1}^n q_{M_i}(a_S, m_i)\). Clearly, this is a monoidal operation (the zero bimodule is a neutral element). The orthogonal sum of quadratic spaces is a quadratic space.

One can also define the tensor product of quadratic bimodules. If \((R_0, A_{R_0})\), \ldots, \((R_n, A_{R_n})\) are quadratic algebras and \((\sub{R_0}{M_1}_{R_1}, B_{M_1}, q_{M_1})\), \ldots, \((\sub{R_{n - 1}}{M_n}_{R_n}, B_{M_n}, q_{M_n})\) are quadratic bimodules, then their tensor product is \(\bigotimes_{i = 1}^n (M_i, B_{M_i}, q_{M_i}) = (\bigotimes_{i = 1}^n M_i, \bigotimes_{i = 1}^n B_{M_i}, \bigotimes_{i = 1}^n q_{M_i})\), where
\begin{align*}
(\bigotimes_{i = 1}^n q_{M_i})(a_{R_0}, &\sum_{1 \leq l \leq L} \bigotimes_{i = 1}^n m_{l, i}) = \sum_{1 \leq l \leq L} q_{M_n}(\ldots q_{M_1}(a_{R_0}, m_{l, 1}), \ldots, m_{l, n}) + \phantom{a}\\
&+ \sum_{1 \leq l < l' \leq L} \phi_{R_n}(B_{M_n}(m_{l, n}, \ldots B_{M_1}(m_{l, 1}, \tr_{R_0}(a_{R_0})m_{l, 1}) \ldots m_{l', n})).
\end{align*}

This tensor product is associative up to isomorphism with neutral elements \((R, B_1, q_1)\), where \(q_1(a_R, r) = a_R \cdot r\). Therefore we obtain a bicategory \(\qbim_K\), where objects are quadratic algebras over a quadratic ring \((K, A_K)\), morphisms are quadratic bimodules, and \(2\)-morphisms are isometries (which preserve both hermitian and quadratic forms). Note that morphisms \((K, A_K) \rar (L, A_L)\) induces the scalar restriction bifunctors \(\qbim_L \rar \qbim_K\).

\begin{prop}\label{QuadraticMorita}
Let \((P_R, B_P)\) be a finitely generated projective module with a nondegenerate sesquilinear form, \((R, A_R)\) be a quadratic algebra. Then \(S = \End_R(P)\) has unique (up to unique isomorphism) quadratic structure with a quadratic form \(q_P\) on \(P\) such that \((P, B_P, q_P)\) has a left adjoint in \(\qbim_K\). All adjoint equivalences are obtained in this way (up to unique isomorphism).
\end{prop}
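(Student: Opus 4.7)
The plan is to lift Lemma \ref{HermitianMorita} from hermitian to quadratic bimodules. That lemma already endows $S=\End_R(P)$ with a unique pseudo-involution making $B_P$ hermitian and $P^\vee$ with a unique hermitian form $B_{P^\vee}$ making $(P^\vee,P)$ an adjoint equivalence in $\hbim_K$; what remains is to construct a quadratic structure $A_S$ on $S$, a quadratic form $q_P$ on $P$, and an induced quadratic form $q_{P^\vee}$ on $P^\vee$ so that the unit and counit of this adjunction become $\qbim_K$-isometries.

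For uniqueness, I would assume such a lift $(A_S,q_P,q_{P^\vee})$ exists. Unpacking the requirement that the counit $\eps\colon P^\vee\otimes_S P\rar R$ is a quadratic isometry via the tensor-product formula from Section 5 yields an explicit identity expressing $q_{P^\vee}(q_P(a_S,-),f)$ in terms of $a_S$ and the evaluation pairing. Dually, the unit $\eta\colon S\rar P\otimes_R P^\vee$ being an isometry gives the opposite identity. Combining both and invoking the nondegeneracy of $B_P$ pins down $q_P$ and $q_{P^\vee}$, and hence also $A_S$ together with its $S^\bullet$-action, $\phi_S$ and $\tr_S$, up to unique isomorphism.

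For existence, I would take the formulas extracted in the uniqueness step as the definition. Choose a dual basis $(p_i,f_i)_{i=1}^n$ of the finitely generated projective module $P_R$, transport the quadratic structure of $(R,A_R)$ through the isomorphism $S\cong P\otimes_R P^\vee$ to obtain a candidate abelian group $A_S$ equipped with an $S^\bullet$-action and maps $\phi_S,\tr_S$, then define $q_P(a_S,p)$ by expanding $p=\sum_i p_i f_i(p)$ and applying QF4 and QF5. A symmetric construction gives $q_{P^\vee}$ on $P^\vee$. Finally, verify axioms QS1--QS4 for $A_S$ and QF1--QF7 for $q_P$, $q_{P^\vee}$, and check that unit and counit are isometries; most axioms reduce to their counterparts for $(R,A_R)$ via the adjunction. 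The final assertion about adjoint equivalences is then a corollary: any adjoint equivalence $(Q,P)$ in $\qbim_K$ is one in $\hbim_K$, so Lemma \ref{HermitianMorita} gives $Q\cong P^\vee$ as a hermitian bimodule, and the uniqueness part identifies its quadratic form with the constructed $q_{P^\vee}$.

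The main obstacle I expect is showing that the abelian group $A_S$ defined above is independent of the chosen dual basis, equivalently, that two different dual-basis expansions of $p\in P$ give the same value of $q_P(a_S,p)$ in $A_R$ modulo the relations imposed on $A_S$. This is the quadratic analogue of the well-definedness of $B_{P^\vee}$ in Lemma \ref{HermitianMorita}, but is combinatorially more involved because of the quadratic correction terms $\phi_R(B_P(p,\tr_S(a_S)p'))$ appearing in QF5; I expect it to follow from the bifunctoriality of the tensor product in $\qbim_K$ applied to the dual-basis elements.
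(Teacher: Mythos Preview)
Your overall strategy is sound and both halves can in principle be carried out, but the paper takes a cleaner path in each, and your uniqueness sketch has a real gap.

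For existence, instead of choosing a dual basis and then fighting the basis-independence problem you flag, the paper builds \(A_S\) directly by generators and relations: generators are formal symbols \(q_Q(a_R,q)\) for \(a_R\in A_R\), \(q\in Q=P^\vee\), together with symbols \(\phi_S(s)\), and the relations are precisely those forced by QF1--QF5 and QS1. The \(S^\bullet\)-action, \(\tr_S\), and the \(A_K\)-module structure are then defined on generators by the only possible formulas, and \(q_P\) is given by \(q_P(q_Q(a_R,q),p)=a_R\cdot\langle q,p\rangle\) together with QF3. This is intrinsically basis-free, so the obstacle you anticipated simply does not arise; the price is a longer list of well-definedness checks, but each is mechanical.

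For uniqueness, your plan to extract explicit formulas from the unit and counit isometry conditions and invoke nondegeneracy of \(B_P\) will pin down the \emph{values} of \(q_P\) and \(q_{P^\vee}\) once \(A_S\) is fixed, but it does not obviously show that the abelian group \(A_S\) itself is determined up to unique isomorphism: \(A_S\) is part of the data, not just a target. The paper handles this bicategorically. Any other candidate \(A'_S\) (with forms \(q'_P,q'_Q\)) receives a natural map \(\pi\colon A_S\to A'_S\) from the generators-and-relations description; the point is to show \(\pi\) is an isomorphism. The key observation is that a structure-preserving map \(A_S\to A'_S\) is the same thing as a quadratic form on the \(S\)-\(S\)-bimodule \((S,B_1)\) with respect to the two quadratic structures, and \(\pi\) corresponds in this way to \((P,B_P,q_P)\otimes_R(Q,B_Q,q'_Q)\). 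This bimodule has a quasi-inverse \((P,B_P,q'_P)\otimes_R(Q,B_Q,q_Q)\) (the composites in either order are already isomorphisms in \(\hbim_K\), hence in \(\qbim_K\)), so \(\pi\) is invertible. This argument avoids any elementwise computation and makes the ``unique isomorphism'' claim transparent.
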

\begin{proof}
The pseudo-involution on \(S\) is constructed uniquely by lemma \ref{HermitianMorita}, and this lemma also implies the second claim (after proving uniqueness in the first one). Let \((Q, B_Q)\) be the left adjoint to \((P, Q_P)\) in \(\hbim_K\) (from now on \(P\) is an \(S\)-\(R\)-bimodule). Note that adjointness for quadratic forms means \(a_S \cdot (p \otimes q) = q_Q(q_P(a_S, p), q)\) and \(a_R \cdot \langle q, p \rangle = q_P(q_Q(a_R, q), p)\).

Let us start from existence. Let
\[A_S = \frac{\langle q_Q(a_R, q), \phi_S(s) \mid a_R \in A_R, q \in Q, s \in S \rangle_{\mathbb Z}}{\left\langle \substack{\phi_S(s + s') - \phi_S(s) - \phi_S(s'),\enskip \phi_S(s - \inv s \lambda_S),\enskip q_Q(\phi_R(r), q) - \phi_S(B_Q(q, rq)),\\ q_Q(a_R + a'_R, q) - q_Q(a_R, q) - q_Q(a'_R, q),\enskip q_Q(a_R \cdot r, q) - q_Q(a_R, rq),\\ q_Q(a_R, q + q') - q_Q(a_R, q) - q_Q(a_R, q') - \phi_S(B_Q(q, \tr_R(a_R)q'))} \right\rangle},\]
where \(q_Q(a_R, q)\) and \(\phi_S(s)\) are formal symbols (from now on we can consider them as values of the maps \(q_Q \colon A_R \times Q \rar A_S\) and \(\phi_S \colon S \rar A_S\)). The abelian group \(A_S\) has a right \(S^\bullet\)-module structure given by the formulas \(\phi_S(s) \cdot s' = \phi_S(\inv{s'}ss')\) and \(q_Q(a_R, q) \cdot s = q_Q(a_R, qs)\) (this structure is well-defined). Moreover, let us define an additive map \(\tr_S \colon A_S \rar S\) by the formulas \(\tr_S(\phi_S(s)) = s + \inv s \lambda_S\) and \(\tr_S(q_Q(a_R, q)) = B_Q(q, \tr_R(a_R)q)\) (this map is also well-defined). Finally, \(A_S\) has a well-defined structure of a left \(A_K\)-module given by \(a_K \phi_S(s) = \phi_S(\tr_K(a_K)s)\) and \(a_K q_Q(a_R, q) = q_Q(a_Ka_R, q)\).

One can directly check that \((S, A_S)\) is a quadratic algebra and \(q_Q\) is a quadratic form. The quadratic form on the bimodule \(P\) is given by the formulas \(q_P(\phi_S(s), p) = \phi_R(B_P(p, sp))\) and \(q_P(q_Q(a_R, q), p) = a_R \cdot \langle q, p \rangle\), it is also well-defined. Another check shows that this is a quadratic form and \((Q, B_Q, q_Q)\) is the left adjoint to \((P, B_P, q_P)\).

Finally, let us prove uniqueness. Let \(A'_S\) be another quadratic structure on \(S\), \(q'_Q\) and \(q'_P\) be the corresponding quadratic forms. Clearly, there exists a natural map \(\pi \colon A_S \rar A'_S\) such that all needed diagrams are commutative, and we have to prove that it is one-to-one. Note that a \((Q, A_S)\)-\((Q, A'_S)\)-bimodule of type \((S, B_1, q)\) with some quadratic form \(q\) is the same as a map \(f \colon A_S \rar A'_S\) making the obvious diagrams commutative (it is true for any ring \(S\) with two quadratic structures). The map \(\pi\) is obtained from the bimodule \((P, B_P, q_P) \otimes_R (Q, B_Q, q'_Q)\) in this way. But this bimodule has a quasi-inverse \((P, B_P, q'_P) \otimes_R (Q, B_Q, q_Q)\) (their composition in any order has an isometry into the identity morphism, which is an isomorphism in \(\hbim_K\), and, consequently, in \(\qbim_K\)). Therefore, \(\pi\) is an isomorphism (and it is clearly unique if we require commutativity of all necessary diagrams).
\end{proof}

\section{Tensor products}

Here we will give definitions of tensor products of quadratic algebras and their localizations with respect to a multiplicative subset of the base ring \(K\). These notions are given in the sake of completeness and will not be used further in the paper.

Let \(R\) and \(S\) be algebras with pseudo-involutions over \(K\). Their tensor product \(R \otimes_K S\) has a pseudo-involution \(\inv{r \otimes s} = \inv r \otimes \inv s\) with symmetry \(\lambda_R \otimes \lambda_S\). This operation is monoidal on \(\ibim_K\) with neutral element \(K\). For any commutative algebra \(L\) with involution we have a well-defined functor \(L \otimes_K - \colon \ibim_K \rar \ibim_L\) (which can be called the scalar extension functor), it preserves the tensor product of algebras up to isomorphism. In particular, if \(S = \inv S \subseteq K\) is a multiplicative subset and \(L = S^{-1}K\) we have \(L \otimes_K R = S^{-1}R\), that is the scalar extension coincides with the localization. Note that the localization with respect to such \(S\) is the same as the localization with respect to \(\{s \inv s \mid s \in S\}\), hence we can consider only localizations with respect to subsets of \(\herm(K)\).

One can also define tensor products over \(K\) and scalar extension functors for the bicategory \(\hbim_K\). The hermitian form on the tensor product of \((\sub SM_R, B_M)\) and \((\sub TN_U, B_N)\) is given by the obvious formula \(B_{M \otimes_K N}(m \otimes n, m' \otimes n') = B_M(m, m') \otimes B_N(n, n')\). It is easy to see that tensor products and scalar extensions preserve nondegeneracy of forms. Next we will generalize this to \(\qbim_K\).

Let \((R, A_R)\) and \((S, A_S)\) be quadratic algebras on \((K, A_K)\). Their tensor product is \((R, A_R) \otimes_{(K, A_K)} (S, A_S) = (T, A_T)\), where \(T = R \otimes_K S\) and
\[A_T = \frac{\langle a_R \otimes a_S, \phi(t) \mid a_R \in A_R, a_S \in A_S, t \in T \rangle_{\mathbb Z}}{\left \langle \substack{(a_R + a'_R) \otimes a_S - a_R \otimes a_S - a'_R \otimes a_S,\enskip a_R \otimes (a_S + a'_S) - a_R \otimes a_S - a_R \otimes a'_S,\\ a_K a_R \otimes a_S - a_R \otimes a_K a_S,\enskip \phi(t + t') - \phi(t) - \phi(t'),\enskip \phi(t - \inv t \lambda_T),\\ \phi(r \otimes \tr_S(a_S)) - \phi_R(r) \otimes a_S,\enskip \phi(\tr_R(a_R) \otimes s) - a_R \otimes \phi_S(s)} \right \rangle}.\]

This \(A_T\) is a right \(T^\bullet\)-module with the multiplication
\[(a_R \otimes a_S) \cdot (\sum_{\!\!\!1 \leq i \leq n\!\!\!} r_i \otimes s_i) = \sum_{\!\!\!1 \leq i \leq n\!\!\!} (a_R \cdot r_i) \otimes (a_S \cdot s_i) + \phi(\sum_{\!\!\!\!\!\!\!\!1 \leq i < j \leq n\!\!\!\!\!\!\!\!} \inv{r_i} \tr_R(a_R) r_j \otimes \inv{s_i} \tr_S(a_S) \otimes s_j),\]
\(\phi(t) \cdot t' = \phi(\inv{t'}tt')\) and a left \(A_K\)-module with the multiplication \(a_K(a_R \otimes a_S) = a_Ka_R \otimes a_S\), \(a_K \phi(t) = \phi(\tr_K(a_K)t)\). Let \(\phi_T(t) = \phi(t)\), \(\tr_T(a_R \otimes a_S) = \tr_R(a_R) \otimes \tr_S(a_S)\), \(\tr_T(\phi(t)) = t + \inv t \lambda_T\). A direct check shows that these operations are well-defined and \((T, A_T)\) is actually a quadratic algebra. If \((\sub S{M_R}, B_M, q_M)\) and \((\sub T{N_U}, B_N, q_N)\) are quadratic bimodules, then we can construct the quadratic bimodule \((\sub{S \otimes T}{(M \otimes N)}_{R \otimes U}, B_{M \otimes N}, q_{M \otimes N})\), where
\begin{align*}
q_{M \otimes N}(a_S \otimes a_T)(\sum_{\!1 \leq i \leq n\!} m_i \otimes n_i) &= \sum_{\!1 \leq i \leq n\!}q_M(a_S, m_i) \otimes q_N(a_T, n_i) +\phantom{a}\\
&+ \phi(\sum_{\!\!\!1 \leq i < j \leq n\!\!\!} B_M(m_i, \tr_S(a_S)m_j) \otimes B_N(n_i, \tr_T(a_T)n_j))
\end{align*}
and \(q_{M \otimes N}(\phi(t), \sum_{1 \leq i \leq n} m_i \otimes n_i) = \phi(B_{M \otimes N}(\sum_{1 \leq i \leq n} m_i \otimes n_i, t \, \sum_{1 \leq j \leq n} m_j \otimes n_j))\). Therefore, we obtain the bifunctor \(\qbim_K \times \qbim_K \rar \qbim_K\), which is commutative and associative up to isomorphism, \((K, A_K)\) is a neutral element.

Now let us consider scalar extensions. If \((L, A_L)\) is a quadratic ring and \((K, A_K) \rar (L, A_L)\) is a morhism, then there is a functor \((L, A_L) \otimes_{(K, A_K)} - \colon \qbim_K \rar \qbim_L\). Indeed, for any quadratic algebra \((R, A_R)\) over \((K, A_K)\) the tensor product \((L, A_L) \otimes_{(K, A_K)} (R, A_R)\) is a quadratic algebra on \((L, A_L)\), the left \(A_L\)-module structure on \(A_{L \otimes_K R}\) is given by the formulas \(a_L (a'_L \otimes a_R) = (a_La'_L) \otimes a_R\) and \(a_L \phi(t) = \phi(\tr_L(a_L) t)\). It is easy to check that this construction is well-defined and \((L \otimes_K R, A_{L \otimes_K R})\) is a quadratic algebra on \((L, A_L)\). Moreover, any bimodule \((L, B_1, q_1) \otimes (M, B_M, q_M)\) is automatically a morphism in \(\qbim_L\). Note that the scalar extension is functorial on \((K, A_K) \rar (L, A_L)\) and preserves tensor products of quadratic algebras.

It is easy to see that the tensor product \((L, A_L) \otimes_{(K, A_K)} (O, A_O)\) of quadratic rings is a pushout. Hence the category of quadratic rings is finitely cocomplete with an initial object \((\mathbb Z, \Heis(\mathbb Z))\).

Finally, we consider localizations. Let \((K, A_K)\) be a quadratic ring, \(S = \inv S \subseteq K\) be a multiplicative subset, \((R, A_R)\) be a quadratic algebra over \((K, A_K)\). The localization of \((R, A_R)\) with respect to \(S\) is the pair \((S^{-1} R, A_{S^{-1} R})\), where \(A_{S^{-1} R} = (1 \cdot S)^{-1} A_R\) (note that \(1 \cdot S\) is a multiplicative subset in \(A_K\)). Obviously, this pair is a quadratic algebra on \((K, A_K)\) with the operations \(\frac{a_R}{1 \cdot s} \cdot \frac{r}{s'} = \frac{a_R \cdot r}{1 \cdot ss'}\), \(\phi_{S^{-1} R}(\frac rs) = \frac{\phi_R(r\inv s)}{1 \cdot s}\), and \(\tr_{S^{-1} R}(\frac{a_R}{1 \cdot s}) = \frac{\tr_R(a_R)}{s\inv s}\). The quadratic algebra \((S^{-1} K, A_{S^{-1} K})\) is actually a quadratic ring. Moreover, there is an isomorphism \((S^{-1} R, A_{S^{-1} R}) \cong (S^{-1} K, A_{S^{-1} K}) \otimes_{(K, A_K)} (R, A_R)\). The localization on quadratic bimodules is given by \(S^{-1}(\sub R{M_T}, B_M, q_M) = (S^{-1} M, B_{S^{-1} M}, q_{S^{-1} M})\), where \(q_{S^{-1} M}(\frac{r}{1 \cdot s}, \frac m{s'}) = \frac{q_M(r, m)}{1 \cdot ss'}\).

At the end let us make a remark. If \(K = Kf_1 + \ldots + K f_n\), where \(f_1, \ldots, f_n \in \herm(K)\), then \(A_K = A_K (1 \cdot f_1) + \ldots + A_K (1 \cdot f_n)\). Indeed, we have \(\sum_{i = 1}^n k_i f_i = 1\) and after simplifying \(1 = 1 \cdot (\sum_{i = 1}^n k_i f_i)^N\) for \(N\) big enough all summands of type \(\phi_K(\ldots)\) will have at least two identical factors \(f_i\) inside the brackets, therefore they can be pushed outside by the formula \(\phi_K(gf_i^2) = \phi_K(g) \cdot f_i\). Then one can, for example, construct sheaves of quadratic algebras on \(\mathrm{Spec}(\herm(K))\).

\section{Elementary transvections}

Let us modify several standard definitions for hermitian bimodules in the our context. If \((\sub T{P_R}, B_P, q_P)\) is a quadratic bimodule and \(P_R\) is finitely generated projective, then the quadratic bimodule \((\met(P), B_{\met(P)}, q_{\met(P)})\) is called the metabolic space over \(P\), where \(\met(P) = P \oplus \inv{P^\vee}\) is a \(T\)-\(R\)-bimodule, \(B_{\met(P)}(\veccol p{\inv f}, \veccol{p'}{\inv{f'}}) = \lambda_R f(\inv \lambda_T p') + \inv{f'(p)} + B_P(p, p')\) and \(q_{\met(P)}(a_T, \veccol p{\inv f}) = \phi_R(\lambda_Rf(\inv{\lambda_T} \tr_T(a_T) p)) + q_P(a_T, p)\). This bimodule is actually a quadratic space. In the case \(B_P = q_P = 0\) the metabolic space is called the hyperbolic space and is denoted by \(\hyp(P)\).

Let \((\sub T{P_R}, B_P, q_P)\) be a quadratic space. Its automorphism group is called the unitary group and is denoted by \(\unit(P) = \unit(P, B_P, q_P)\). We will be interested in the case \(P = P_0 \perp \hyp(P_1) \perp \ldots \perp \hyp(P_l)\), where \(P_0\) is a quadratic space (the odd part of \(P\)). Let \(P_{-i} = \inv{P_i^\vee}\) for all \(1 \leq i \leq l\), then \(P = \bigoplus_{i = -l}^l P_i\) as a \(T\)-\(R\)-bimodule. From now on we will assume that for all \(i, j \neq 0\) the bimodule \(P_i\) is isomorphic to a direct summand in \(P_j^N\) for \(N\) big enough.

Let \(E = \End_R(P_R)\), then by proposition \ref{QuadraticMorita} there is an adjunction \((P^\vee, P)\) in \(\qbim_K\). In this case \(E\) is a quadratic space (as a bimodule \(\sub T{E_E}\)) and the adjunction gives the isomorphism \(\Aut_\bim(E) \cong \Aut_\bim(P) = \gl(P)\) (since \(E \otimes_E P \otimes_R P^\vee \cong E\) and \(P \otimes_R P^\vee \otimes_E P \cong P\)), which induces the isomorphism \(\unit(E) \cong \unit(P)\). Let \(C = \Cent_E(T) = \End_\bim(E) = \End_\bim(P)\), it is a \(K\)-algebra with involution (that is obtained as the restriction of the pseudo-involution on \(E\)). The algebra \(C\) commutes with \(\lambda_E\), because \(\lambda_E\) equals to the image of \(\lambda_T\) by lemma \ref{HermitianMorita}). Then \(\Aut_\bim(E) = \Aut_\bim(P) = C^*\).

The condition \(P = P_0 \perp \hyp(P_1) \perp \ldots \perp \hyp(P_l)\) is equivalent to existence of a complete family of pairwise orthogonal idempotents \(\{e_i\}_{-l \leq i \leq l}\) in \(C\) such that \(\inv{e_i} = e_{-i}\), \(q(A_T, e_i) = 0\) for all \(i \neq 0\), and our additional assumption is equivalent to \((1 - e_0) C e_i C (1 - e_0) = (1 - e_0) C (1 - e_0)\) for all \(i \neq 0\).

Let
\[t_{i, j}(x) = 1 + x\]
for all \(x \in e_i C e_j\) and \(i \neq j\), similarly to the elementary transvections in the general linear group. It is easy to check the identities
\begin{enumerate} % linear transvections
\item[LT1.] \(t_{i, j}(x) \, t_{i, j}(y) = t_{i, j}(x + y)\);
\item[LT2.] \([t_{i, j}(x), t_{j, k}(y)] = t_{i, k}(xy)\) for all \(i \neq k\);
\item[LT3.] \([t_{j, i}(x), t_{k, j}(y)] = t_{k, i}(-yx)\) for all \(i \neq k\);
\item[LT4.] \([t_{i, j}(x), t_{k, l}(y)] = 1\) for all \(j \neq k\) and \(l \neq i\).
\end{enumerate}

The elementary transvections are
\[\tau_{i, j}(x, y) = t_{i, j}(x) t_{-j, -i}(-\inv y) = 1 + x - \inv y\]
for all \(x, y \in e_i C e_j\), \(0 \neq i \neq \pm j \neq 0\) and
\[\tau_i(x, y, z) = t_{0, i}(x) t_{-i, i}(y) t_{-i, 0}(z) = 1 + x + y + z\]
for all \(x \in e_0 C e_i\), \(y \in e_{-i} C e_i\), \(z \in e_{-i} C e_0\), \(i \neq 0\). The elementary unitary group is the group
\[\eunit(P) = \langle \tau_{i, j}(x, y), \tau_i(x, y, z) \mid \tau_{i, j}(x, y) \in \unit(P), \tau_i(x, y, z) \in \unit(P) \rangle \leq \unit(P).\]

For comfort work with \(\tau_{i, j}(x, y)\) and \(\tau_i(x, y, z)\) let us introduce the \(K\)-algebra \(\mathcal A = C \times C\) with the diagonal embedding \(C \hookrightarrow A\) and the involution \(\inv{(x, y)} = (\inv y, \inv x)\), and the group \(\mathcal H = e_0 C \times C \times C e_0\) with the group operation \((x, y, z) \dotplus (x', y', z') = (x + x', y + zx' + y', z + z')\), the neutral element \(\dot 0 = (0, 0, 0)\) and the inverses \(\dotminus (x, y, z) = (-x, zx - y, -z)\). There is the right action of \(\mathcal A^\bullet\) on \(\mathcal H\) given by the formula \((x, y, z) \cdot (p, q) = (xp, \inv q y p, \inv q z)\), and there are the group homomorphisms \(\pi \colon \mathcal H \rar \mathcal A, (x, y, z) \mapsto (x, -\inv z)\) and \(\phi \colon \mathcal A \rar \mathcal H, (x, y) \mapsto (0, x - \inv y, 0)\). Finally, there is the map \(\tr \colon \mathcal H \rar \mathcal A, (x, y, z) \mapsto (y, \inv{zx - y})\). All these operations satisfy
\begin{enumerate} % noncommutative quadratic something
\item[NQ1.] \(h \dotplus h' \dotminus h \dotminus h' = \phi(-\inv{\pi(h)} \pi(h'))\), \(h \dotplus \phi(a) = \phi(a) \dotplus h\);
\item[NQ2.] \(\phi(\inv a) = \dotminus \phi(a) = \phi(-a)\);
\item[NQ3.] \(\pi(\phi(a)) = 0\);
\item[NQ4.] \(\tr(h \dotplus h') = \tr(h) - \inv{\pi(h)} \pi(h') + \tr(h')\), \(\tr(\dot 0) = 0\), \(\tr(\dotminus h) = -\inv{\pi(h)} \pi(h) - \tr(h)\);
\item[NQ5.] \(\inv{\tr(h)} = \tr(\dotminus h)\);
\item[NQ6.] \(h \cdot (a + a') = h \cdot a \dotplus \phi(\inv{a'} \tr(h) a) \dotplus h \cdot a'\), \(h \cdot 0 = \dot 0\), \(h \cdot (-1) = \phi(\tr(h)) \dotminus h\);
\item[NQ7.] \(\phi(a) \cdot a' = \phi(\inv{a'} aa')\);
\item[NQ8.] \(\tr(h \cdot a) = \inv a \tr(h) a\);
\item[NQ9.] \(\pi(h \cdot a) = \pi(h) a\);
\item[NQ10.] \(\tr(\phi(a)) = a - \inv a\), \(\phi(\tr(h)) = h \dotplus h \cdot (-1)\).
\end{enumerate}

\begin{lemma}\label{LevelTransvections}
There are the following identities:
\begin{enumerate} % transvections
\item[T1.] \(\tau_{i, j} \colon e_i \mathcal A e_j \rar \gl(P)\) and \(\tau_i \colon \mathcal H \cdot e_i \rar \gl(P)\) are homomorphisms;
\item[T2.] \(\tau_{i, j}(a) = \tau_{-j, -i}(-\inv a)\);
\item[T3.] \([\tau_{i, j}(a), \tau_{k, l}(a')] = 1\), if \(i \neq l \neq -j \neq -k \neq i\);
\item[T4.] \([\tau_{i, j}(a), \tau_{j, k}(a')] = \tau_{i, k}(aa')\) and \([\tau_{j, i}(a), \tau_{k, j}(a')] = \tau_{k, i}(-a'a)\), if \(i \neq \pm k\);
\item[T5.] \([\tau_{-i, j}(a), \tau_{j, i}(a')] = \tau_i(\phi(aa'))\);
\item[T6.] \([\tau_i(h), \tau_j(h')] = \tau_{-i, j}(-\inv{\pi(h)} \pi(h'))\), if \(i \neq \pm j\);
\item[T7.] \([\tau_i(h), \tau_{j, k}(a)] = 1\), if \(j \neq i \neq -k\);
\item[T8.] \([\tau_i(h), \tau_{i, j}(a)] = \tau_{-i, j}(\tr(h) a) \tau_j(\dotminus h \cdot (-a))\).
\end{enumerate}
\end{lemma}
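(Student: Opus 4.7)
The plan is to verify each of T1--T8 by expanding the $\tau$'s into their defining products of $t_{i,j}$'s and then reducing via the commutation relations LT1--LT4. Since $\tau_{i,j}(a)$ is a product of two generators and $\tau_i(h)$ is a product of three, every left-hand side becomes a word of bounded length in the $t_{i,j}$'s, and the Chevalley-type relations LT1--LT4 rewrite such a word canonically. Throughout, the index conditions in each part of the lemma are exactly what is needed to ensure that the relevant instance of LT1--LT4 applies.

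Identities T1 and T2 are essentially direct unpackings. For the $\tau_i$ part of T1 one must commute $t_{-i,0}(z)$ past $t_{0,i}(x')$ when expanding $\tau_i(h) \tau_i(h')$; LT2 produces an extra factor $t_{-i,i}(zx')$, which is precisely the nontrivial correction $zx'$ in the middle component of $h \dotplus h' = (x+x', y+zx'+y', z+z')$. Identities T3 and T7 are the cases where every pair of the generators involved commutes by LT4, so the commutator is trivial. Identity T4 is a single application of LT2 or LT3 combined with T2.

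T5 and T6 are one step more involved. When expanding $[\tau_{-i,j}(a), \tau_{j,i}(a')]$ with $a=(x,y)$ and $a'=(x',y')$, LT4 annihilates all but two commutators, namely $[t_{-i,j}(x), t_{j,i}(x')] = t_{-i,i}(xx')$ by LT2 and $[t_{-j,i}(-\inv y), t_{-i,-j}(-\inv{y'})] = t_{-i,i}(-\inv{y'}\inv y)$ by LT3; these survive in the center of $\eunit(P)$ far enough to combine into $t_{-i,i}(xx' - \inv{y'}\inv y) = \tau_i(\phi(aa'))$, giving T5. For T6 the surviving commutators in $[\tau_i(h), \tau_j(h')]$ are $[t_{-i,0}(z), t_{0,j}(x')]$ and $[t_{0,i}(x), t_{-j,0}(z')]$, and their values reassemble into the single factor $\tau_{-i,j}(-\inv{\pi(h)}\pi(h'))$ once $\pi(x,y,z) = (x,-\inv z)$ is substituted in.

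The most delicate identity is T8. Expanding $[\tau_i(h), \tau_{i,j}(a)]$ produces several surviving commutators yielding generators of types $t_{0,j}$, $t_{-i,j}$, $t_{-j,0}$, and $t_{-i,i}$; these must be collected and matched against the product $\tau_{-i,j}(\tr(h) a) \, \tau_j(\dotminus h \cdot (-a))$ on the right, which itself expands into five generators of the same four types. The identification uses precisely the axioms NQ4--NQ9 governing $\tr$, the action $\cdot$, and $\dotminus$ on $\mathcal H$. The only genuine obstacle in the whole lemma is this combinatorial bookkeeping in T8 --- matching signs, keeping track of involutions on $C$, and recognizing the rearranged generators as the asserted combination of $\tau_{-i,j}$ and $\tau_j$; no structural idea beyond LT1--LT4 and NQ1--NQ10 is needed.
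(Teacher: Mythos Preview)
Your proposal is correct and follows exactly the route the paper takes: the paper's entire proof is the single sentence ``This directly follows from the relations for \(t_{i,j}(x)\)'', i.e.\ expand each \(\tau\) into its defining product of \(t\)'s and reduce with LT1--LT4, which is precisely what you outline in more detail. Your identification of which commutators survive in T5, T6, and T8, and the observation that the bookkeeping in T8 is the only place requiring the NQ-identities, are accurate.
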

\begin{proof}
This directly follows from the relations for \(t_{i, j}(x)\).
\end{proof}

\section{Level groups}

Let \(\Lambda_i = \{h \in \mathcal H \cdot e_i \mid \tau_i(h) \in \unit(P)\}\) for \(i \neq 0\). This group depends on \(q\), for any \((x, y, z) \in \Lambda_i\) there are the equalities \(z = -\inv x\) and \(y + \inv y = zx\) (they are equivalent to \(\inv{\tau_i(x, y, z)} = \tau_i(-x, zx - y, -z)\)). Let also \(\Lambda = \sum_{i \neq 0}^\cdot \Lambda_i \dotplus \phi(C) \cdot (1 - e_0)\).

\begin{lemma}\label{Lambda}
Let \(l \geq 3\). Then \(\pi(\Lambda) \leq e_0 C (1 - e_0)\), \(\tr(\Lambda) + \inv{\pi(\Lambda)} \pi(\Lambda) \leq (1 - e_0) C (1 - e_0)\), and \(\Lambda \cdot (1 - e_0) C (1 - e_0) \dotplus \phi(C) \cdot (1 - e_0) \leq \Lambda\). Moreover, \(\Lambda_i = \Lambda \cdot e_i\) for all \(i \neq 0\).
\end{lemma}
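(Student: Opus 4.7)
I verify (1)--(4) by direct calculation in the explicit coordinate descriptions of $\mathcal H$ and $\mathcal A$ given just before Lemma~\ref{LevelTransvections}, using axioms NQ1--NQ10 and the two algebraic relations $z = -\inv x$, $y + \inv y = zx$ that characterise triples $(x,y,z) \in \Lambda_i$. For (1), $\pi$ is a group homomorphism $(\mathcal H, \dotplus) \to (\mathcal A, +)$ and $\pi \circ \phi = 0$ by NQ3, so it suffices to evaluate $\pi$ on the $\Lambda_i$-generators: for $h = (x, y, -\inv x) \in \Lambda_i$ one gets $\pi(h) = (x, \lambda_C x \inv{\lambda_C})$, both coordinates in $e_0 C e_i \subseteq e_0 C (1-e_0)$. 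For (2), the relation $y + \inv y = -\inv x x$ gives $\tr(h) = (y, \lambda_C y \inv{\lambda_C}) \in e_{-i} C e_i \subseteq (1-e_0) C (1-e_0)$ on a $\Lambda_i$-generator, and NQ10 handles the $\phi$-generators; the non-additivity formula NQ4 then shows that $\tr$ on a $\sum^\cdot$-sum equals the pointwise sum modulo cross terms $-\inv{\pi(h_i)} \pi(h_j)$, whose coordinates $-\inv{x_i} x_j$ lie in $e_i C e_j \subseteq (1-e_0) C (1-e_0)$, so absorbing them is exactly the role of the $\inv{\pi(\Lambda)} \pi(\Lambda)$ summand in the statement.

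For (3), the trailing $\dotplus \phi(C)\cdot(1-e_0)$ is absorbed because $\phi$-values are $\dotplus$-central (NQ1). For stability under the right action $\cdot c$ with $c \in (1-e_0) C(1-e_0)$, I decompose $c = \sum_{j, k \neq 0} e_j c e_k$ and apply NQ6 iteratively: the $\phi$-corrections it introduces have both arguments in $(1-e_0) C (1-e_0)$ and so lie in $\phi(C) \cdot (1-e_0) \subseteq \Lambda$; on each $\Lambda_i$-generator the support condition $x = x e_i$ kills all summands with $j \neq i$, leaving $h_i \cdot c = \sum^\cdot_{k \neq 0} h_i \cdot (e_i c e_k)$ modulo such $\phi$-corrections. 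A direct check confirms each piece $h_i \cdot (e_i c e_k)$ lies in $\mathcal H \cdot e_k$ and satisfies the defining algebraic relations of $\Lambda_k$; to upgrade this to the unitary condition $\tau_k(h_i \cdot (e_i c e_k)) \in \unit(P)$, I invoke identity T8 of Lemma~\ref{LevelTransvections} with $a = -e_i c e_k$: for $k \neq \pm i$ the right side expresses the required transvection (up to a $\tau_{-i, k}$ factor already in $\unit(P)$) as the commutator $[\tau_i(h_i), \tau_{i, k}(-e_i c e_k)]$ of two unitary elements. The remaining case $k = i$, and the analogous statements for $\phi(C) \cdot (1-e_0)$-generators, are treated via NQ7 together with T4--T5, factoring through an auxiliary index whose existence is guaranteed by $l \geq 3$.

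Claim (4) is then a direct consequence: the inclusion $\Lambda_i \subseteq \Lambda \cdot e_i$ is automatic, and for $h = \sum^\cdot_j h_j \dotplus \phi(c_0) \cdot (1-e_0) \in \Lambda$ the right-action endomorphism $\cdot e_i$ of $\mathcal H$ kills every $h_j$ with $j \neq i$ since $e_j e_i = 0$, so $h \cdot e_i = h_i \dotplus \phi(c_0) \cdot e_i$, which lies in $\Lambda_i$ by (3). The main obstacle throughout is (3): moving from the easily-verified algebraic relations on $h_i \cdot (e_i c e_k)$ to the actual unitarity of $\tau_k$ of this triple, which requires the commutator identity T8 together with the room in the index set $\{-l, \dots, -1, 1, \dots, l\}$ afforded by $l \geq 3$.
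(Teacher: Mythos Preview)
Your argument is correct and is precisely the elaboration of the paper's one-line proof, which merely cites Lemma~\ref{LevelTransvections} together with the fact that \(\tau_{i,j}(a) \in \unit(P)\) iff \(a \in e_i C e_j\); your use of T8 for \(k \neq \pm i\) and of T5 plus factoring through an auxiliary index for the degenerate cases is exactly what that citation unpacks to. One small slip: since T8 is only available for \(j \neq \pm i\), both \(k = i\) and \(k = -i\) require the auxiliary-index factoring (you list only \(k = i\) as ``remaining''), but your argument handles \(k = -i\) identically once this is noted.
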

\begin{proof}
This is followed from lemma \ref{LevelTransvections}, because \(\tau_{i, j}(a) \in \unit(P)\) iff \(a \in e_i C e_j\).
\end{proof}

As an example let us find explicitly \(\Lambda\) in the case \(T = K\) and the the quadratic form is given by an odd form parameter \(\mathcal L \leq \Heis(B)\). The condition \(q(1, e_i) = 0\) means \((e_i, 0) \in \mathcal L\) for all \(0 \neq i\). The element \(\tau_i(x, y, z)\) is in \(\unit(P)\), if \(z = -\inv x\) and \((xp, B(p, yp)) \in \mathcal L\) for all \(p \in P_i\). Therefore, \(\Lambda_i = \{(x, y, -\inv x) \in \mathcal H \cdot e_i \mid (xp, B(p, yp)) \in \mathcal L \text{ for } p \in \bigoplus_{i \neq 0} P_i\}\). Conversely, if \(\Lambda\) satisfies all conditions from lemma \ref{Lambda}, \(T = K\) and \(P = E = C\), then \(\Lambda\) is obtained from the odd form parameter \(\mathcal L = \{(x, y) \mid (x, y, -\inv x) \in \Lambda\} \cdot C \dotplus \langle (e_i, 0) \mid i \neq 0 \rangle \dotplus \mathcal L_{\mathrm{min}}\).

An augmented level is a pair \(L = (I, \Gamma)\), where \(I = \inv I \leq \mathcal A\), \(\Gamma \leq \mathcal H\) and \(I \, (I + K + (1 - e_0) C (1 - e_0) + \pi(\Lambda) + \inv{\pi(\Lambda)}) \leq I\), \(\pi(\Gamma) \leq I\), \(\tr(\Gamma) + \inv{\pi(\Gamma)} \pi(\Gamma) \leq I\), \(\Gamma \cdot (I + K + (1 - e_0) C (1 - e_0) + \pi(\Lambda) + \inv{\pi(\Lambda)}) \dotplus \Lambda \cdot I \dotplus \phi(I) \leq \Gamma\), \(e_0 I (1 - e_0) = \pi(\Gamma \cdot (1 - e_0))\). Two augmented levels \(L = (I, \Gamma)\) and \(L' = (I', \Gamma')\) are called equivalent if \(I(1 - e_0) = I'(1 - e_0)\) and \(\Gamma \cdot (1 - e_0) = \Gamma' \cdot (1 - e_0)\), the equivalence classes are called levels. We will often denote the class of an augmented level \(L\) also as \(L\). For a level \(L\) its enveloping level is the level \(\widehat L\), where \(\widehat I (1 - e_0) = I(1 - e_0) + (1 - e_0) C (1 - e_0) + \pi(\Lambda)\) and \(\widehat \Gamma \cdot (1 - e_0) = \Gamma \cdot (1 - e_0) \dotplus \Lambda\). Any level \(L\) as a class contains the smallest and the biggest augmented levels \(\lfloor L \rfloor\) and \(\lceil L \rceil\), where \(e_0 \lfloor I \rfloor e_0 = e_0 I (1 - e_0) \widehat I e_0 + e_0 \widehat I (1 - e_0) I e_0\), \(\lfloor \Gamma \rfloor \cdot e_0 = \Gamma \cdot (1 - e_0) \widehat I e_0 \dotplus \Lambda \cdot Ie_0 \dotplus \phi(\lfloor I \rfloor) \cdot e_0\), \(e_0 \lceil I \rceil e_0 = \{a \in e_0 \mathcal A e_0 \mid a \widehat I (1 - e_0) + (1 - e_0) \widehat I a \leq I\}\), and \(\lceil \Gamma \rceil \cdot e_0 = \{h \in \mathcal H \cdot e_0 \mid \pi(h) \in \lceil I \rceil, \tr(h) \in \lceil I \rceil, h \cdot \widehat I (1 - e_0) + \phi(I) \cdot (1 - e_0) \leq \Gamma\}\).

\begin{lemma}\label{GroupLevel}
Let \(l \geq 3\) and \(G \leq \gl(P)\) is normalized by \(\eunit(P)\). Then there exists unique level \(L(G) = (I(G), \Gamma(G))\) such that \(\tau_{i, j}(a) \in G\) iff \(a \in I(G)\) and \(\tau_i(h) \in G\) iff \(h \in \Gamma(G)\).
\end{lemma}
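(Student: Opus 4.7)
The strategy is to define the candidate $(I(G), \Gamma(G))$ by reading off transvections from $G$ block-by-block, extend to a full augmented level using the forced axioms, then verify the level axioms using the commutator identities of Lemma \ref{LevelTransvections}. Uniqueness follows because every $(1 - e_0)$-block of an admissible $I$ is either directly read off from $G$ (via the iff) or forced by the axioms.

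Concretely, for $0 \neq i \neq \pm j \neq 0$ I put $I_{ij}(G) = \{a \in e_i \mathcal{A} e_j \mid \tau_{i, j}(a) \in G\}$, and for $i \neq 0$ I put $\Gamma_i(G) = \{h \in \mathcal{H} \cdot e_i \mid \tau_i(h) \in G\}$. Identity T1 shows these are subgroups. I then assemble $I(G)$ by collecting the off-diagonal pieces, forcing $e_0 I(G) e_j = \pi(\Gamma_j(G))$ for $j \neq 0$ via the compatibility equality axiom, forcing $e_j I(G) e_0 = \inv{e_0 I(G) e_{-j}}$ via $\inv{I} = I$, filling the remaining diagonal pieces $e_{\pm i} I(G) e_i$ for $i \neq 0$ from $\tr(\Gamma) + \inv{\pi(\Gamma)} \pi(\Gamma) \leq I$ together with $I \cdot I \leq I$, and picking a valid $e_0 I(G) e_0$ (for instance $\lfloor I \rfloor$ from the previous section). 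Any other augmented level satisfying the iff must coincide with this on all $(1 - e_0)$-blocks by the same forced reasoning, so its equivalence class equals $L(G)$.

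The remaining work is the axiom verification, which splits into two kinds. Closures involving two $G$-pieces (such as $I(G) \cdot I(G) \leq I(G)$ and $\Gamma(G) \cdot I(G) \dotplus \phi(I(G)) \dotplus \Lambda \cdot I(G) \leq \Gamma(G)$) are direct applications of T4--T8 combined with the fact that $G$ is a subgroup: for $a \in I_{ij}(G)$ and $a' \in I_{jk}(G)$ with $i \neq \pm k$, T4 gives $\tau_{i, k}(aa') = [\tau_{i, j}(a), \tau_{j, k}(a')] \in G$, so $aa' \in I_{ik}(G)$; T5 absorbs $[\tau_{-i, j}(a), \tau_{j, i}(a')]$ into $\Gamma$ via $\phi$; T6 and T8 handle the mixed $\Gamma$-$I$ contributions. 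Closures under the ``external scalar'' multipliers $K$, $(1 - e_0) C (1 - e_0)$, $\pi(\Lambda)$, and $\inv{\pi(\Lambda)}$ use the normalization of $G$ by $\eunit(P)$: diagonal elements $(c, c) \in \mathcal{A}$ give transvections $1 + c - \inv{c}$ lying in $\unit(P) \supseteq \eunit(P)$ because the two halves are mutually $B$-adjoint in $C$, and elements of $\Lambda_i$ by construction give $\tau_i$-transvections in $\eunit(P)$; conjugating $G$-elements by these yields the required closures. The hypothesis $l \geq 3$ supplies a third index $k \neq 0, \pm i, \pm j$ needed for interpolation in T4 and T6. The main obstacle is the bookkeeping across all blocks and multipliers, particularly the $\pi(\Lambda) + \inv{\pi(\Lambda)}$ piece, which couples the quadratic-form data (via $\Lambda$) with the linear-form data (via $I$) through T5, T6, and T8, and so must be realized through carefully chosen nested commutators.
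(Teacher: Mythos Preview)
Your proposal is correct and follows the same route as the paper. The paper's own proof is a single line---``The proof is completely similar to the one of lemma \ref{Lambda}, if we note that \([G, G] \leq G\) and \([G, \eunit(P)] \leq G\)''---and what you have written is precisely the unpacking of that line: define the pieces \(I_{ij}(G)\), \(\Gamma_i(G)\), then use the identities T1--T8 together with closure under \([G,G]\) (for the two-\(G\)-piece closures) and \([G,\eunit(P)]\) (for the external multipliers \((1-e_0)C(1-e_0)\), \(\pi(\Lambda)\), \(\inv{\pi(\Lambda)}\)) to check the level axioms, with \(l\geq 3\) supplying the spare index. One small wording fix: where you say the diagonal transvections \(\tau_{i,j}((c,c))\) lie ``in \(\unit(P)\supseteq\eunit(P)\)'', what you actually need and have is that they lie in \(\eunit(P)\) itself---they are among its generators by definition---so that conjugation by them is available.
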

\begin{proof}
The proof is completely similar to the one of lemma \ref{Lambda}, if we note that \([G, G] \leq G\) and \([G, \eunit(P)] \leq G\).
\end{proof}

Therefore, any group \(G\) normalized by \(\eunit(P)\) corresponds to the level \(L(G)\). If \(L = (I, \Gamma)\) is a level, then one can define the group
\[\eunit(L) = \langle \tau_{i, j}(a), \tau_i(h) \mid a \in e_i I e_j, h \in \Gamma \cdot e_i, 0 \neq i \neq \pm j \neq 0 \rangle\]
and the elementary level group
\[\eunit(P, L) = {^{\eunit(P)}\eunit(L)}.\]

Let us define \(\alpha(g) = (g, \inv{g^{-1}})\) and \(\eps = (e_0, e_+, -e_0)\), where \(e_+ = \sum_{i > 0} e_i\) and \(e_- = \sum_{i < 0} e_i\). It is easy to see that \(\pi(\eps) = e_0\) and \(\tr(\eps) = e_+ - \zeta\), where \(\zeta = (0, 1)\).

\begin{lemma}\label{LevelGroup}
The map \(\alpha \colon C^* \rar \mathcal A^*\) is a homomorphism, \(\inv{\alpha(g)} = \alpha(g^{-1})\), and there are the following identities
\begin{align*}
\alpha(\tau_{i, j}(a)) &= 1 + a - \inv a; \\
\alpha(\tau_i(h)) &= 1 + \tr(h) + \pi(h) - \inv{\pi(h)}; \\
\eps \cdot \alpha(\tau_{i, j}(a)) \dotminus \eps &= \begin{cases}
 \phi(\inv a), &\text{if }i < 0 < j, \\
 \phi(a), &\text{if }i > 0 > j, \\
 \dot 0, &\text{if }ij > 0; \end{cases} \\
\eps \cdot \alpha(\tau_i(h)) \dotminus \eps &= \begin{cases}
 \dotminus h \cdot (-1) \dotminus \phi(\pi(h)), &\text{if }i > 0, \\
 h, &\text{if }i < 0. \end{cases}
\end{align*}
\end{lemma}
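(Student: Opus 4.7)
The plan is to verify each assertion by direct computation, leaning on two simplifying facts. First, the symmetry $\lambda_C$ is central in $C$ (as noted in the paragraph preceding Lemma~\ref{LevelTransvections}), so $\inv{\inv c} = c$ for every $c \in C$ and the swapping involution on $\mathcal A = C \times C$ becomes a genuine involution. Second, the orthogonal idempotents $\{e_i\}$ satisfy $\inv{e_i} = e_{-i}$, so a product such as $e_+ x$ or $x e_+$ for $x \in e_i C e_j$ collapses to $x$ or to $0$ according to the signs of the relevant indices. With these in hand, $\alpha(gg') = \alpha(g)\alpha(g')$ is a one-line check, and $\inv{\alpha(g)} = \alpha(g^{-1})$ reduces to $\inv{\inv{g^{-1}}} = g^{-1}$, which is exactly the centrality of $\lambda_C$.

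For $\alpha(\tau_{i,j}(a)) = 1 + a - \inv a$, I would write $\tau_{i,j}(a) = 1 + (x - \inv y)$ with $a = (x,y)$ and $x, y \in e_i C e_j$; since $j \neq \pm i$, all four cross-products of $x$ and $\inv y$ vanish, so the nilpotent part squares to zero and $\tau_{i,j}(a)^{-1} = 1 - x + \inv y$. Applying the involution of $C$ and reading off both coordinates of $\alpha$ matches the claim. For $\alpha(\tau_i(h))$ the same pattern works but the nilpotent is cube-zero: with $h = (x,y,z)$, an enumeration of the nine products of $x, y, z$ shows $(x+y+z)^2 = zx$ and $(x+y+z)^3 = 0$, so $\tau_i(h)^{-1} = 1 - (x+y+z) + zx$; comparing the resulting pair against the explicit expressions $\tr(h) = (y,\, \inv x \inv z - \inv y)$ and $\pi(h) = (x, -\inv z)$ (and using $\inv{-\inv z} = -z$ for the second coordinate) matches both sides.

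For the two $\eps$-identities the strategy is to expand $\eps \cdot (g_1, g_2) = (e_0 g_1,\, \inv{g_2} e_+ g_1,\, -\inv{g_2}\, e_0)$ from the $\mathcal A^\bullet$-action on $\mathcal H$, and then combine with $\dotminus \eps = (-e_0,\, -e_0 - e_+,\, e_0)$ via the group law. The outer coordinates always simplify almost completely, since each of $x, y, z, \inv y$ carries a nonzero index on at least one side. The casework lives in the middle coordinate: for $\tau_{i,j}$ I would split into the four sign patterns of $(i,j)$ and in each one tabulate whether each of $e_+ x,\, x e_+,\, e_+ \inv y,\, \inv y e_+$ is $0$ or the full element; the cases $ij > 0$, $i < 0 < j$, $i > 0 > j$ then produce $\dot 0$, $\phi(\inv a)$, and $\phi(a)$ respectively. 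For $\tau_i$ there is a useful shortcut: the formula $g_2 = 1 - \inv x - \inv y - \inv z + \inv x \inv z$ gives $\inv{g_2} = 1 - x - y - z + zx = g_1^{-1}$, so the middle coordinate reduces to $g_1^{-1} e_+ g_1$; a two-way split on the sign of $i$ then produces $\dotminus h \cdot (-1) \dotminus \phi(\pi(h))$ or $h$ after the final $\dotplus \dotminus \eps$.

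The routine part is the enumeration of the sign cases, which is mechanical but voluminous. The main obstacle I expect is sign bookkeeping at the interface between the involution on $C$ and the swapping involution on $\mathcal A$, together with keeping straight which side of a product is being struck by $e_0$ or $e_+$; my mitigation is to fix notation at the start, record the four nonzero and four zero entries of each case in a single compact display, and expand each product once so that no term is reinverted twice.
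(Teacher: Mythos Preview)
Your proposal is correct and is exactly the approach the paper takes: the paper's entire proof is the single sentence ``All claims are followed from the definitions,'' and you have supplied a careful outline of that direct verification. One small streamlining you could exploit throughout: once you have \(\inv{\alpha(g)} = \alpha(g^{-1})\), the identity \(\inv{g_2} = g_1^{-1}\) holds for \emph{every} \(g\), so the ``useful shortcut'' you reserve for \(\tau_i\) applies equally to \(\tau_{i,j}\) and lets you write the middle coordinate uniformly as \(g^{-1} e_+ g\).
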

\begin{proof}
All claims are followed from the definitions.
\end{proof}

Now let \(L\) be an augmented level. The principal level group of augmented level \(L\) is the group
\[\unit(P, L) = \{g \in C^* \mid \alpha(g) - 1 \in I, \eps \cdot \alpha(g) \dotminus \eps \in \Gamma\}.\]
This is a group by lemma \ref{LevelGroup}: indeed, \(\alpha(g^{-1}) - 1 = \inv{(\alpha(g) - 1)}\), \(\alpha(gg') - 1 = (\alpha(g) - 1) \alpha(g') + (\alpha(g') - 1)\), \(\eps \cdot \alpha(g^{-1}) \dotminus \eps = \dotminus (\eps \cdot \alpha(g) \dotminus \eps) \cdot \alpha(g^{-1})\), and \(\eps \cdot \alpha(gg') \dotminus \eps = (\eps \cdot \alpha(g) \dotminus \eps) \cdot \alpha(g') \dotplus (\eps \cdot \alpha(g') \dotminus \eps)\). Moreover, the lemma implies that \(\unit(P, L)\) is normalized by \(\eunit(P)\) (because \({^{\alpha(h)} \alpha(g)} - 1 = {^{\alpha(h)} (\alpha(g) - 1)}\) and \((\eps \cdot {^{\alpha(h)} \alpha(g)} \dotminus \eps) \cdot \alpha(h) = (\eps \cdot \alpha(h) \dotminus \eps) \cdot \alpha(g) \dotplus (\eps \cdot \alpha(g) \dotminus \eps) \dotminus (\eps \cdot \alpha(h) \dotminus \eps)\)) and its level is \(L\). Therefore, the level of \(\eunit(P, L)\) is also \(L\). We also will use the general level group
\[\gunit(P, L) = \{g \in \Norm_{\gl(P)}(\unit(P, L)) \mid [g, \eunit(P, \widehat L)] \subseteq \unit(P, L)\}.\]
Clearly, it is a group normalized by \(\eunit(P)\).

\begin{lemma}\label{GeneralLevelGroup}
Let \(l \geq 3\) and \(L\) be an augmented level. The group \(\gunit(P, L)\) contains \(\unit(P, L)\) and its level is \(L\). Besides, \(\gunit(P, L)\) is contained in the set \(\gunit'(P, L) = \{g \in \gl(P) \mid [\{g, g^{-1}\}, \eunit(P, \widehat L)] \subseteq \unit(P, L)\}\), which can also be defined through the equations
\begin{align*}
{^{\alpha(g^{\pm 1})} a} - a \in I & \text{ for all } a \in \lfloor \widehat I \rfloor + K, \\
\eps \cdot {^{\alpha(g^{\pm 1})} a} \dotminus \eps \cdot a\alpha(g^{\mp 1}) \in \Gamma & \text{ for all } a \in (1 - e_0) \widehat I, \\
\eps \cdot {^{\alpha(g^{\pm 1})}\pi(h)} \dotminus \eps \cdot \pi(h) \alpha(g^{\mp 1}) \dotplus h \cdot \alpha(g^{\mp 1}) \dotminus h \in \Gamma & \text{ for all } h \in \lfloor \widehat \Gamma \rfloor.
\end{align*}
Moreover, \([\gunit'(P, \lfloor L \rfloor), \unit(P, \lfloor \widehat I \rfloor + K, \lfloor \widehat \Gamma \rfloor)] \subseteq \unit(P, \lfloor L \rfloor)\) and \(\gunit(P, \lfloor L \rfloor) = \gunit'(P, \lfloor L \rfloor)\).
\end{lemma}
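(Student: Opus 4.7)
The plan proceeds through four tasks. First, the inclusion $\unit(P, L) \leq \gunit(P, L)$ is straightforward: any $g \in \unit(P, L)$ normalizes $\unit(P, L)$ trivially, and for $t \in \eunit(P, \widehat L) \leq \eunit(P)$ the $\eunit(P)$-normality of $\unit(P, L)$ gives $tg^{-1}t^{-1} \in \unit(P, L)$, whence $[g, t] = g \cdot (tg^{-1}t^{-1}) \in \unit(P, L)$. The group $\gunit(P, L)$ is itself $\eunit(P)$-normal because $\eunit(P, \widehat L) = {^{\eunit(P)} \eunit(\widehat L)}$ is normal in $\eunit(P)$ and $\unit(P, L)$ is $\eunit(P)$-normal, so Lemma~\ref{GroupLevel} attaches to it a unique level which contains $L$. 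For the reverse containment I extract: if $\tau_{i,j}(a) \in \gunit(P, L)$, then by~T4, $\tau_{i,k}(ab) = [\tau_{i,j}(a), \tau_{j,k}(b)] \in \unit(P, L)$ for every $b \in e_j C e_k \leq e_j \widehat I e_k$, choosing $k \notin \{0, \pm i, \pm j\}$ (possible since $l \geq 3$); hence $a \cdot (e_j C e_k) \leq I$. Writing $e_j = \sum_m c_m d_m$ with $c_m \in e_j C e_k$ and $d_m \in e_k C e_j \leq (1-e_0) C (1-e_0)$, available from the hypothesis $(1-e_0) C e_k C (1-e_0) = (1-e_0) C (1-e_0)$, and using that $I$ is a right module over $(1-e_0) C (1-e_0)$, I obtain $a = \sum_m (a c_m) d_m \in I$. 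An analogous argument built on identities T6 and T8 gives $h \in \Gamma$ when $\tau_i(h) \in \gunit(P, L)$, so the level of $\gunit(P, L)$ is exactly $L$.

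The inclusion $\gunit(P, L) \leq \gunit'(P, L)$ comes from the identity $[g^{-1}, t] = g^{-1}[g, t]^{-1} g$ together with $g \in \Norm_{\gl(P)}(\unit(P, L))$: the right side lies in $\unit(P, L)$ because $[g, t]^{-1} \in \unit(P, L)$ and $g$ normalizes this group.

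The crux is deriving the three displayed equations characterizing $\gunit'(P, L)$. Both the condition defining $\gunit'(P, L)$ and the three equations are invariant under $\eunit(P)$-conjugation of $g$ (using normality of $\eunit(P, \widehat L)$ and of $\unit(P, L)$ in $\eunit(P)$), and $\eunit(P, \widehat L) = {^{\eunit(P)} \eunit(\widehat L)}$, so verification reduces to commutators $[g, \tau_{i,j}(b)]$ and $[g, \tau_i(h)]$ with parameters $b \in e_i \widehat I e_j$ and $h \in \widehat \Gamma \cdot e_i$. Writing $[g, t] \in \unit(P, L)$ as $\alpha([g, t]) - 1 \in I$ and $\eps \cdot \alpha([g, t]) \dotminus \eps \in \Gamma$, one applies the factorization $\alpha([g, t]) - 1 = \bigl({^{\alpha(g)}(\alpha(t) - 1)} - (\alpha(t) - 1)\bigr) \alpha(t)^{-1}$ and the formulas of Lemma~\ref{LevelGroup} for $\eps \cdot \alpha(\tau_{i,j}(b)) \dotminus \eps$ and $\eps \cdot \alpha(\tau_i(h)) \dotminus \eps$. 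The nilpotence of $\alpha(t) - 1 \in \{b - \inv b,\ \tr(h) + \pi(h) - \inv{\pi(h)}\}$ makes $\alpha(t)^{-1}$ a finite polynomial in $\alpha(t) - 1$, and the closure of $I$ and $\Gamma$ under multiplication by $K$ and by $(1-e_0) C (1-e_0)$ absorbs all correction terms so that the $I$-condition reduces to equation~(i) with $a$ running over $\lfloor \widehat I \rfloor + K$ (the ``$+K$'' capturing the scalar contribution from the inversion) and the $\Gamma$-condition to equations~(ii) and~(iii); the conditions with $\alpha(g^{-1})$ arise by the same derivation applied to $[g^{-1}, t]$.

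Finally, the commutator inclusion $[\gunit'(P, \lfloor L \rfloor), \unit(P, \lfloor \widehat I \rfloor + K, \lfloor \widehat \Gamma \rfloor)] \leq \unit(P, \lfloor L \rfloor)$ is read off the same expansion applied to a general element $h$ of the right-hand factor, whose defining conditions $\alpha(h) - 1 \in \lfloor \widehat I \rfloor + K$ and $\eps \cdot \alpha(h) \dotminus \eps \in \lfloor \widehat \Gamma \rfloor$ are exactly the hypotheses on the variable arguments in equations~(i)--(iii). The equality $\gunit(P, \lfloor L \rfloor) = \gunit'(P, \lfloor L \rfloor)$ is then immediate: for $g \in \gunit'(P, \lfloor L \rfloor)$, the inclusions $\unit(P, \lfloor L \rfloor) \leq \unit(P, \lfloor \widehat I \rfloor + K, \lfloor \widehat \Gamma \rfloor)$ and $\eunit(P, \widehat L) \leq \unit(P, \lfloor \widehat I \rfloor + K, \lfloor \widehat \Gamma \rfloor)$ (both verified from the closure properties of augmented levels and the fact that $\widehat{\widehat L} = \widehat L$), combined with the just-proved commutator inclusion, produce both $g \in \Norm_{\gl(P)}(\unit(P, \lfloor L \rfloor))$ and $[g, \eunit(P, \widehat L)] \leq \unit(P, \lfloor L \rfloor)$. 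The main obstacle is the equation derivation of the third paragraph: a careful bookkeeping is required to track how the ``$+K$'', the floor operation, and the closure of $\Gamma$ under $\phi$ and under right multiplication absorb the correction terms produced by inverting $\alpha(t)$ and by distributing $\eps \cdot$ across noncommutative products in $\mathcal H$.
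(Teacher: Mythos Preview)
Your overall structure matches the paper's, and your treatment of the inclusions $\unit(P,L)\leq\gunit(P,L)\leq\gunit'(P,L)$ and of the final equality $\gunit(P,\lfloor L\rfloor)=\gunit'(P,\lfloor L\rfloor)$ is sound. Your direct computation of the level of $\gunit(P,L)$ via T4/T6/T8 is a legitimate alternative to the paper's route (which reads the level off the already-derived equations).

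There is, however, a genuine gap in your derivation of the three displayed equations. After your factorization $\alpha([g,t])-1=\bigl({}^{\alpha(g)}(\alpha(t)-1)-(\alpha(t)-1)\bigr)\alpha(t)^{-1}$ and the nilpotence trick for $\alpha(t)^{-1}$, what you actually obtain for a short-root transvection $t=\tau_{i,j}(a)$ is only
\[
{}^{\alpha(g)}(a-\inv a)\equiv a-\inv a \pmod I,
\]
i.e.\ a congruence for the \emph{coupled} quantity $a-\inv a$, not for $a$ itself. Passing from this to ${}^{\alpha(g)}a\equiv a$ for every $a\in(1-e_0)\widehat I(1-e_0)$ is the step the paper calls ``multiplying the congruences of the first kind for various $i$ and $j$'': one takes $a\in e_i\widehat I e_j$ and $b\in e_j\widehat I e_k$ with $i,j,k$ pairwise non-antipodal, observes that $(a-\inv a)(b-\inv b)=ab$ because the cross terms vanish idempotent-wise, and checks that the product of two such congruences remains a congruence modulo $I$ (this uses that $a-\inv a, b-\inv b\in I+(1-e_0)C(1-e_0)$ together with the closure $I\cdot(I+(1-e_0)C(1-e_0))\leq I$ and its involution-reflected version). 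Only after this decoupling, combined with the congruences coming from $\tau_i(h)$, does one reach equation~(i) on all of $\lfloor\widehat I\rfloor+K$. Your phrase ``the closure \ldots\ absorb all correction terms'' does not cover this: the issue is not absorbing error terms from inverting $\alpha(t)$, but separating the two halves of $a-\inv a$, which genuinely requires the product argument and $l\geq 3$.

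A parallel remark applies to the $\Gamma$-equations: from the $\eps$-condition for $[g,\tau_{i,j}(a)]$ you first get a statement about $\eps\cdot{}^{\alpha(g)}(a-\inv a)$, and one must argue (as the paper does) that the $\inv a$-contribution already lies in $\Gamma$ after right-multiplication by $\alpha(g^{-1})$, before isolating $\eps\cdot{}^{\alpha(g)}a$. Likewise for the ultrashort case one must unwind the specific shape of $\eps\cdot\alpha(\tau_i(h))\dotminus\eps$ from Lemma~\ref{LevelGroup} and perform the substitution $h\mapsto\dotminus h\cdot(-1)$ to reach equation~(iii); this is more than bookkeeping. Finally, the commutator inclusion $[\gunit'(P,\lfloor L\rfloor),\unit(P,\lfloor\widehat I\rfloor+K,\lfloor\widehat\Gamma\rfloor)]\subseteq\unit(P,\lfloor L\rfloor)$ requires an explicit computation of $\eps\cdot\alpha([g,x])\dotminus\eps$ for a general $x$ in the second factor, splitting $\alpha(x)$ along $e_0$ and $1-e_0$; this is the long displayed computation at the end of the paper's proof and is not a formal consequence of having the three equations for root transvections alone.
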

\begin{proof}
Suppose that \(g \in C^*\) satisfies \([g, \eunit(P, \widehat L)] \subseteq \unit(P, L)\). This is equivalent to \(\alpha([g, \tau_{i, j}(a)]) - 1 \in I\), \(\eps \cdot \alpha([g, \tau_{i, j}(a)]) \dotminus \eps \in \Gamma\) for all \(a \in e_i \widehat I e_j\) (where \(0 \neq i \neq \pm j \neq 0\)) and \(\alpha([g, \tau_i(h)]) - 1 \in I\), \(\eps \cdot \alpha([g, \tau_i(h)]) \dotminus \eps \in \Gamma\) for all \(h \in \widehat \Gamma \cdot e_i\) (where \(i \neq 0\)). Then \({^{\alpha(g)} (a - \inv a)} \equiv a - \inv a \mod I\) and \({^{\alpha(g)} (\tr(h) + \pi(h) - \inv{\pi(h)})} \equiv \tr(h) + \pi(h) - \inv{\pi(h)} \mod I\). After multiplying the congruences of the first kind for various \(i\) and \(j\) (we use only that \(l \geq 2\)) we obtain \({^{\alpha(g)} a} \equiv a \mod I\) for all \(a \in (1 - e_0) \widehat I (1 - e_0)\). Then, using the congruences of the second kind, we obtain the first equation on \(g\), that is \({^{\alpha(g)} a} \equiv a \mod I\) for all \(a \in \lfloor \widehat I \rfloor + K\).

Further, we know that \(\eps \cdot {^{\alpha(g)} (1 + a - \inv a)} \equiv \eps \cdot (1 + a - \inv a) \mod \Gamma\) for all \(a \in e_i \widehat I e_j\) (where \(0 \neq i \neq \pm j \neq 0\)). It follows that \(\eps \cdot {^{\alpha(g)} a} \in \Gamma\) (note that \(\Gamma \leqt \langle \eps \cdot (I + \lfloor \widehat I \rfloor + K) \rangle \dotplus \Gamma\) and \(\phi(\zeta (a - \inv a)) = \phi(a)\) for all \(a \in \mathcal A\)). Therefore, \(\eps \cdot {^{\alpha(g)} a} \equiv \eps \cdot a \alpha(g^{-1}) \mod \Gamma\) for all \(a \in (1 - e_0) \widehat I\). Moreover, \(\eps \cdot {^{\alpha(g)}(1 + \tr(h) + \pi(h) - \inv{\pi(h)})} \equiv \eps \cdot (1 + \tr(h) + \pi(h) - \inv{\pi(h)}) \mod \Gamma\) for all \(h \in \widehat \Gamma \cdot e_i\) and \(i \neq 0\). After simplifications we obtain \(\eps \cdot {^{\alpha(g)}\pi(h)} \dotplus \eps \cdot {^{\alpha(g)}(-\inv{\pi(h)})} \equiv \dotminus h \cdot (-1) \dotplus \phi(\zeta {^{\alpha(g)} \tr(h)}) \mod \Gamma\). Recall that \(\eps \cdot {^{\alpha(g)} (-\inv{\pi(h)})} \equiv \eps \cdot (-\inv{\pi(h)} \alpha(g^{-1})) = 0\), hence \(\eps \cdot {^{\alpha(g)} \pi(h)} \dotminus \eps \cdot \pi(h) \alpha(g^{-1}) \dotminus h^* \cdot (-\alpha(g^{-1})) \dotplus h \cdot (-1) \in \Gamma\). After changing \(h\) into \(\dotminus h \cdot (-1)\) the third equation on \(g\) follows.

The level of \(\gunit(P, L)\) can be found from the equations (it is sufficient to use the first and the second ones). If we prove \([\gunit'(P, \lfloor L \rfloor), \unit(P, \lfloor \widehat I \rfloor + K, \lfloor \widehat \Gamma \rfloor)] \subseteq \unit(P, \lfloor L \rfloor)\), then it will be clear that \(\gunit'(P, \lfloor L \rfloor)\) is a group and is equal to \(\gunit(P, \lfloor L \rfloor)\). Let \([g, \eunit(P, \widehat L)] \subseteq \unit(P, \lfloor L \rfloor)\) and \(x \in \eunit(P, \lfloor \widehat I \rfloor + K, \lfloor \widehat \Gamma \rfloor)\), then
\[\alpha([g, x]) - 1 = ({^{\alpha(g)} \alpha(x)} - \alpha(x)) \alpha(x^{-1}) \in \lfloor I \rfloor\]
and if \(h = \eps \cdot \alpha(x) \dotminus \eps \in \lfloor \widehat \Gamma \rfloor\), then
\begin{align*}
(\eps \cdot \alpha([g, x]) &\dotminus \eps) \cdot \alpha(x) \equiv\\
&\equiv \eps \cdot {^{\alpha(g)}(e_0 \alpha(x))} \dotplus \eps \cdot {^{\alpha(g)}((1 - e_0)\alpha(x))} \dotminus \eps \cdot \alpha(x) \equiv\\
&\equiv \eps \cdot {^{\alpha(g)}\pi(\eps \cdot \alpha(x) \dotminus \eps)} \dotminus \phi({^{\alpha(g)}(\zeta e_0(\alpha(x) - 1))}) \dotplus \eps \cdot {^{\alpha(g)}e_0} \dotplus \phantom{a}\\
&\quad\dotplus\eps \cdot (1 - e_0)\alpha(x)\alpha(g^{-1}) \dotminus \eps \cdot \alpha(x) \equiv\\
&\equiv \eps \cdot \alpha(x) \dotminus \eps \dotminus (\eps \cdot \alpha(x) \dotminus \eps) \cdot \alpha(g^{-1}) \dotplus \eps \cdot e_0(\alpha(x) - 1)\alpha(g^{-1}) \dotminus \phantom{a}\\
&\quad\dotminus \phi({^{\alpha(g)}(\zeta e_0(\alpha(x) - 1))}) \dotplus \eps \dotminus \eps \cdot \alpha(x) \dotminus \phantom{a}\\
&\quad\dotminus \eps \cdot (1 - e_0) \alpha(g^{-1}) \dotplus \eps \cdot (1 - e_0)\alpha(x)\alpha(g^{-1}) \equiv \dot 0 \mod \Gamma.\qedhere
\end{align*}
\end{proof}

\section{Localization and roots}

Let \(S \leq \herm(K)^\bullet\) be a multiplicative subset, \(L\) be a level. Then \(S^{-1} L \subseteq S^{-1} \mathcal A \times S^{-1} \mathcal H\) can be considered as a level (if instead of \(\Lambda\) we will take \(S^{-1} \Lambda\)), hence one can define the corresponding subgroups \(\eunit(S^{-1} P, S^{-1} L)\), \(\unit(S^{-1} P, S^{-1} L)\) and \(\gunit(S^{-1} P, S^{-1} L)\) in \((S^{-1} C)^*\) (in general, they are not related to the bimodule \(S^{-1} P\), because its endomorphism ring can be strictly bigger than \(S^{-1} C\)). Let \(L_0\) be the level of \(\eunit(P)\), \(\eunit(S^{-1} P)\) be the group \(\eunit(S^{-1} P, S^{-1} L_0)\), then \(\eunit(S^{-1} P, S^{-1} L) = {^{\eunit(S^{-1} P)} \eunit(S^{-1} L)}\). The localization homomorphism will be denoted by \(\Psi_S\).

If \(L = (I, \Gamma)\) is a level and \(s \in S\), then let \(L \cdot s = (Is, \Gamma \cdot s + \phi(Is))\), it is also a level. Let us introduce the system of subgroups \(\Omega_S(P, L) = \{\eunit(P, L \cdot s) \mid s \in S\}\) in \(\eunit(P, L)\). We a going to prove that \(\Omega_S(P, L)\) is a neighborhood base of the identity for some group topology (it will be called a base for short) on \(\eunit(P, L)\), not necessary Hausdorff, and that \(\eunit(P, \widehat L)\) acts continuously by conjugation on \(\eunit(P, L)\). Moreover, we a going to prove similar statements for \(\eunit(S^{-1} P, S^{-1} L)\) with the subgroup system \(\Psi_S(\Omega_S(P, L))\).

Let \(\Phi = \{\pm \mathrm e_i \pm \mathrm e_j, \pm \mathrm e_k, \pm 2\mathrm e_k \mid 1 \leq i < j \leq l; 1 \leq k \leq l\} \subseteq \mathbb R^l\), it is a non-reduced (crystallographic) root system of type \(BC_l\). Its elements (i.\,e. roots) of length \(2\) are called long, of length \(\sqrt 2\) --- the short ones, and of length \(1\) --- the ultrashort ones. If \(\alpha \in \Phi\) is a root and \(L = (I, \Gamma)\) is a level, then one can defines a subgroup \(U_\alpha(L) \leq \eunit(P, L)\) in the following way (where \(\mathrm e_i = -\mathrm e_{-i}\) for \(i < 0\)):
\[
U_\alpha(L) = \begin{cases}
\tau_{i, j}(I), &\alpha = \mathrm e_i - \mathrm e_j, 0 \neq i \neq \pm j \neq 0;\\
\tau_i(\Gamma), &\alpha = -\mathrm e_i, 0 \neq i;\\
\tau_i(\phi(I)), &\alpha = -2\mathrm e_i, 0 \neq i.
\end{cases}
\]

Clearly, \(U_{2\mathrm e_i}(L) \leqt U_{\mathrm e_i}(L)\). From lemma \ref{LevelTransvections} it follows that
\[[U_\alpha(L), U_\beta(\widehat L)] \leq \prod_{\substack{i \alpha + j \beta \in \Phi\\ i, j > 0}} U_{i\alpha + j\beta}(L),\]
if \(i \alpha + j \beta \neq 0\) for all \(i, j > 0\) (the lemma also implies that the product in the right hand side is a group and is independent from the order of factors).

\[\xymatrix@R=6.5pt@C=7.5pt@!0{
&& \gamma &&&& \alpha &&\\
&&&&&&&&\\
&&&&&&&&\\
&&&&&&&&\\
-\beta &&&&&&&& \beta \\
&&&& *{} \ar[uuuurr]\ar[uuuull]\ar[ddddrr]\ar[ddddll]\ar[rrrr]\ar[llll] &&&&\\
&&&&&&&&\\
&&&&&&&&\\
&&&&&&&&\\
&&&&&&&&\\
&& -\alpha &&&& -\gamma &&\\
&&&&&&&&\\
&&&&&&&&\\
&&&&\text{Root system of type \(A_2\)}&&&&\\
}
\qquad
\xymatrix@R=6.5pt@C=6.5pt@!0{
2\gamma &&&& \alpha + \gamma &&&& 2\alpha \\
&&&&&&&&\\
&& \,\,\gamma &&&& \alpha\,\, &&\\
&& *{} \ar[uull] &&&& *{} \ar[uurr] &&\\
-\beta &&&&&&&& \beta\\
&&&& *{} \ar[uuuu]\ar[rrrr]\ar[llll]\ar[dddd]\ar[uurr]\ar[uull]\ar[ddrr]\ar[ddll] &&&&\\
&&&&&&&&\\
&& *{} \ar[ddll] &&&& *{} \ar[ddrr] &&\\
&& \,\ -\alpha &&&& -\gamma\ \,\, &&\\
&&&&&&&&\\
-2\alpha\,\,\,\,\,\, &&&& -\alpha - \gamma &&&& \,\,\,\,\,\,-2\gamma \\
&&&&&&&&\\
&&&&&&&&\\
&&&&\text{Root system of type \(BC_2\)}&&&&
}\]

\begin{lemma}\label{LevelPerfection}
If \(l \geq 3\) and \(L\) is a level, then \([\eunit(P, L), \eunit(P)] = \eunit(P, L)\).
\end{lemma}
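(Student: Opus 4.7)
The inclusion \([\eunit(P, L), \eunit(P)] \leq \eunit(P, L)\) is automatic, since \(\eunit(P, L) = {^{\eunit(P)}\eunit(L)}\) is by construction normalized by \(\eunit(P)\). For the reverse inclusion I observe first that \([\eunit(P, L), \eunit(P)]\) is itself normalized by \(\eunit(P)\) (conjugation just moves the commutator to another commutator of the same kind), so it suffices to show that each of the standard generators \(\tau_{i, j}(a)\) with \(a \in e_i I e_j\) and \(\tau_i(h)\) with \(h \in \Gamma \cdot e_i\) of \(\eunit(L)\) already lies in \([\eunit(P, L), \eunit(P)]\); the full normal closure \(\eunit(P, L)\) will then follow.

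The case of \(\tau_{i, j}(a)\) is handled by the classical transit trick. Since \(l \geq 3\) one can pick \(k \neq 0, \pm i, \pm j\), and the full projectivity assumption \((1 - e_0) C e_k C (1 - e_0) = (1 - e_0) C (1 - e_0)\) yields a decomposition \(e_i = \sum_s u_s v_s\) with \(u_s \in e_i C e_k\) and \(v_s \in e_k C e_i\). Then \(a = \sum_s u_s (v_s a)\), and \(v_s a \in e_k I e_j\) because \(I\) is closed under multiplication by \((1 - e_0) C (1 - e_0)\) on either side (using \(I = \inv I\)). Commutator identity T4 gives
\[\tau_{i, j}(a) = \prod_s \tau_{i, j}(u_s \cdot v_s a) = \prod_s [\tau_{i, k}(u_s), \tau_{k, j}(v_s a)],\]
which is manifestly a product of commutators of elements of \(\eunit(P)\) and \(\eunit(L) \leq \eunit(P, L)\).

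For \(\tau_i(h)\), I repeat the same decomposition \(e_i = \sum_s u_s v_s\), and note that \(u_s \in (1 - e_0) C (1 - e_0)\) implies \(h \cdot u_s \in \Gamma \cdot e_k\), so \(\tau_k(h \cdot u_s) \in \eunit(L)\). Applying T8 with the auxiliary element \(\tau_{k, i}(v_s) \in \eunit(P)\) yields
\[[\tau_k(h \cdot u_s), \tau_{k, i}(v_s)] = \tau_{-k, i}(\tr(h \cdot u_s) \, v_s) \cdot \tau_i(\dotminus (h \cdot u_s) \cdot (-v_s)).\]
The first factor lies in \([\eunit(P, L), \eunit(P)]\) by the previous paragraph, since \(\tr(h \cdot u_s) \in \tr(\Gamma) \leq I\). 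Expanding \(\dotminus(h \cdot u_s) \cdot (-v_s)\) using NQ6 and NQ7, one rewrites it as \(h \cdot (u_s v_s) \dotminus \phi(\inv{v_s} \tr(h \cdot u_s) v_s)\). The \(\tau_i(\phi(\cdot))\) part is a long-root unipotent which by T5 equals a product \([\tau_{-i, j}(\,\cdot\,), \tau_{j, i}(\,\cdot\,)]\) after a further decomposition \(e_i = \sum_t r_t s_t\) with \(r_t \in e_i C e_j\), \(s_t \in e_j C e_i\) (\(j \neq 0, \pm i\)), so it too lies in \([\eunit(P, L), \eunit(P)]\). Splitting the product via T1 therefore puts \(\tau_i(h \cdot u_s v_s)\) inside \([\eunit(P, L), \eunit(P)]\). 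Finally, \(h = h \cdot e_i = h \cdot \sum_s u_s v_s\) equals \(\sum_s^{\dotplus} h \cdot (u_s v_s)\) up to an explicit sum of \(\phi\)-terms provided by NQ6, and these \(\phi\)-terms are again in \(\phi(I) \leq \Gamma\) and so give \(\tau_i(\phi(\,\cdot\,))\) already absorbed by T5. Thus \(\tau_i(h)\) is a product of elements of \([\eunit(P, L), \eunit(P)]\), completing the proof.

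The main obstacle is not the linear case — it is purely a transit-index argument — but the quadratic case, where the non-additivity of the action of \(\mathcal A\) on \(\mathcal H\) (axiom NQ6) forces extra \(\phi(\cdot)\) corrections to appear both in the expansion of \(\dotminus (h \cdot u_s) \cdot (-v_s)\) and in \(h \cdot \sum u_s v_s\). The key point that makes these corrections harmless is that every such correction automatically lies in \(\phi(I)\) (because \(\tr(\Gamma) \leq I\)), and long-root elements of the form \(\tau_i(\phi(y))\) with \(y \in e_{-i} I e_i\) are themselves commutators by T5 together with the same full projectivity decomposition used for the linear part.
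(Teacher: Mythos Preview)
Your proof is correct and follows essentially the same strategy as the paper's: express each generator of \(\eunit(L)\) as a product of commutators using the relations T4, T5, T8. The paper phrases this compactly in the language of root subgroups --- for a short root \(\alpha = \beta + \gamma\) it simply writes \(U_\alpha(L) = [U_\beta(L), U_\gamma(L_0)]\), and for an ultrashort \(\alpha = \beta + \gamma\) it shows \(U_\alpha(L) \leq \langle U_{2\alpha}(L), [U_\gamma(L), U_\beta(L_0)], U_{\alpha+\gamma}(L)\rangle\) and then reduces \(U_{2\alpha}(L)\) and \(U_{\alpha+\gamma}(L)\) to the short-root case --- whereas you unpack the same relations element by element, making the full projectivity decomposition \(e_i = \sum_s u_s v_s\) and the \(\phi\)-correction bookkeeping (NQ6, NQ8, NQ10) explicit. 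The two arguments are the same computation written at different levels of abstraction; the paper's version is terser but implicitly relies on the same projectivity hypothesis you invoke.
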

\begin{proof}
It is sufficient to prove that \(\eunit(L) \leq [\eunit(L), \eunit(L_0)]\). If \(\alpha\) is a short root, then it can be presented as \(\alpha = \beta + \gamma\), where \(\beta\) and \(\gamma\) are also short, hence \(U_\alpha(L) = [U_\beta(L), U_\gamma(L_0))]\). If \(\alpha\) is ultrashort, then \(\alpha = \beta + \gamma\), where \(\beta\) is short and \(\gamma\) is ultrashort, hence
\begin{align*}
U_\alpha(L) &\leq \langle U_{2\alpha}(L), [U_\gamma(L), U_\beta(L_0)], U_{\alpha + \gamma}(L) \rangle \leq\\
&\leq \langle [U_{\alpha + \gamma}(L), U_\beta(L_0))], [U_\gamma(L), U_\beta(L_0)], U_{\alpha + \gamma}(L) \rangle.\qedhere
\end{align*}
\end{proof}

Note that if \(H, H' \in \Omega_S(P, L)\), then there exists \(H'' \in \Omega_S(P, L)\) such that \(H'' \leq H \cap H'\) (if \(H = \eunit(P, L \cdot s)\) and \(H' = \eunit(P, L \cdot s')\), then one can take \(H'' = \eunit(P, L \cdot ss')\)). Therefore, in order to prove that \(\Omega_S(P, L)\) is a base and \(\eunit(P, \widehat L)\) acts continuously on \(\eunit(P, L)\), is is sufficient to prove continuity in \(1\) (or in \((1, 1)\)) of maps \(\eunit(P, L) \rar \eunit(P, L), g \mapsto [g, x]\) for \(x \in \eunit(P, \widehat L)\) fixed, \(\eunit(P, \widehat L) \rar \eunit(P, L), x \mapsto [g, x]\) for \(g \in \eunit(P, \widehat L)\) fixed, and \(\eunit(P, L) \times \eunit(P, \widehat L) \rar \eunit(P, L), (g, x) \mapsto [g, x]\).

\begin{lemma}\label{GeneratorsOfElementaryLevelGroup}
If \(l \geq 3\) and \(L\) is a level, then \(\eunit(P, L) = \langle {^{U_{-\alpha}(L_0)} U_\alpha(L)} \mid \alpha \in \Phi \rangle\). If \(S \leq \herm(K)^\bullet\) is a multiplicative subset, then \(\Omega_S(P, L)\) is a base of \(\eunit(P, L)\) and \(\Psi_S(\Omega_S(P, L))\) is a base of \(\eunit(S^{-1} P, S^{-1} L)\). Moreover, \(\eunit(P, \widehat L)\) acts continuously by conjugation on \(\eunit(P, L)\) and \(\eunit(S^{-1} P, S^{-1} \widehat L)\) acts continuously by conjugation on \(\eunit(S^{-1} P, S^{-1} L)\).
\end{lemma}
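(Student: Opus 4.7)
My plan is to prove the generation formula first and then reduce the topological claims to a bookkeeping problem about commutators of root subgroups.

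\textbf{Generation.} Set $F = \langle {}^{U_{-\alpha}(L_0)} U_\alpha(L) \mid \alpha \in \Phi \rangle$. Taking $1 \in U_{-\alpha}(L_0)$ shows $U_\alpha(L) \leq F$ for every $\alpha$, hence $\eunit(L) \leq F \leq \eunit(P, L)$. Since $\eunit(P, L) = {}^{\eunit(L_0)} \eunit(L)$ and $\eunit(L_0)$ is generated by the root subgroups $U_\beta(L_0)$, the reverse inclusion follows once I show $F$ is normalized by each $U_\beta(L_0)$. I would verify this by computing ${}^u({}^{u'} w)$ for $u \in U_\beta(L_0)$, $u' \in U_{-\alpha}(L_0)$, $w \in U_\alpha(L)$ via the Chevalley-type identities of Lemma \ref{LevelTransvections}. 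The case $\beta = -\alpha$ is immediate since $uu' \in U_{-\alpha}(L_0)$. When $\alpha$ and $\beta$ are independent, I would rewrite $uu' = u' u c$ with $c$ a product of level-$L_0$ elements at roots $\delta = i(-\alpha) + j\beta$ ($i, j > 0$); since $l \geq 3$, none of these $\delta$ is $\pm\alpha$, and further Chevalley expansions put the resulting factors back into $F$. The remaining proportional cases $\beta = \pm \alpha$ (with $\beta \neq -\alpha$) are handled by identities T5 and T8, where one exploits a third root direction disjoint from $\pm\alpha$, which again exists precisely because $l \geq 3$.

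\textbf{Topology.} By the remark preceding the lemma it suffices to check continuity of the three commutator maps at $1$ (and $(1, 1)$). Applying the generation formula to $L \cdot s$ and to $\widehat L$, every element of $\eunit(P, L \cdot s)$ is a finite word in generators ${}^u w$ with $u \in U_{-\alpha}(L_0)$, $w \in U_\alpha(L \cdot s)$, and likewise for $\eunit(P, \widehat L)$. The identities $[g, xy] = [g, x]\, {}^x[g, y]$ and $[gh, x] = {}^g[h, x]\, [g, x]$ reduce continuity to a single-generator computation: $[{}^{u'} w, {}^{u_1} w_1]$, which after pulling the outer conjugations outside expands, via Lemma \ref{LevelTransvections}, into a product of root-subgroup elements. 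The augmented-level axiom $I \cdot \widehat I \leq I$ (together with $\Gamma \cdot \widehat I \subseteq \Gamma$ and the corresponding absorption under $\pi$ and $\tr$) forces each such product to lie in $\prod_\gamma U_\gamma(L \cdot s'')$ for a predetermined finite multiple $s''$ of $s$. Running this for each of the finitely many generators of the fixed argument produces the desired $H' = \eunit(P, L \cdot s') \in \Omega_S(P, L)$.

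\textbf{Localization and main obstacle.} The localized statements follow by applying $\Psi_S$: the root-subgroup presentation, the Chevalley formulas, and the augmented-level axioms all descend through localization, and $\Psi_S$ carries $\Omega_S(P, L)$ onto the claimed base in $\eunit(S^{-1} P, S^{-1} L)$. I expect the main obstacle to be the bookkeeping in the topology step, specifically when the fixed argument contains ultrashort-root generators $\tau_i(h)$: identities T5--T8 of Lemma \ref{LevelTransvections} mix short, long, and ultrashort contributions, so one must track the absorption simultaneously on $I$ and on $\Gamma$, invoking the full set of augmented-level axioms --- in particular the clauses involving $\pi(\Gamma)$ and $\tr(\Gamma)$ --- to keep the output inside a single $\eunit(P, L \cdot s)$.
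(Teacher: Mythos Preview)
Your outline is essentially correct and rests on the same commutator calculus as the paper, but the paper organizes the argument differently and this organization is worth noting. Rather than first proving that $F = \langle {}^{U_{-\alpha}(L_0)} U_\alpha(L)\rangle$ is normal under $\eunit(L_0)$ and then separately establishing continuity, the paper proves both at once: it shows directly that ${}^{U_\delta(\widehat L)}\bigl({}^{U_{-\alpha}(L_0)} U_\alpha(L\cdot s^2)\bigr) \subseteq \eunit'(L\cdot s)$ for each root $\delta$, so the generation formula and the continuity of conjugation by a single root element fall out together. This saves a pass through the case analysis.

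The one place where your sketch is thin is exactly the proportional case you flag. When $\delta$ is proportional to $\alpha$ (in particular $\delta = \alpha$), there is no Chevalley-type formula for $[U_\alpha, U_{-\alpha}]$, so one cannot ``compute ${}^u({}^{u'}w)$ via Lemma~\ref{LevelTransvections}'' directly. The trick---which your phrase ``third root direction'' gestures at---is to \emph{decompose} the inner root subgroup: write $\alpha = \beta + \gamma$ with $\beta,\gamma$ not proportional to $\alpha$ (short $+$ short for $\alpha$ short via T4; short $+$ ultrashort for $\alpha$ ultrashort via T8 together with T5 for the long part), so that $U_\alpha(L\cdot s^2) \leq [U_\beta(L\cdot s), U_\gamma(L_0\cdot s)]$ up to controlled extra factors. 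Conjugation by $U_{\pm\alpha}(\widehat L)$ now acts on $U_\beta$ and $U_\gamma$ through the \emph{linearly independent} commutator formulas, and the paper's displayed computations show exactly how the resulting products land in $\eunit'(L\cdot s)$. Your citation of T5 and T8 is apt for the ultrashort/long cases, but for short $\alpha$ the relevant identity is T4; and in all cases the argument requires the explicit decomposition, not just the existence of a third index. Once this is in place, your reduction for the remaining continuity statements ($x\mapsto [g,x]$ for fixed $g$, and joint continuity at $(1,1)$) matches the paper's.
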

\begin{proof}
Let \(\eunit'(L) = \langle {^{U_{-\alpha}(L_0)} U_\alpha(L)} \mid \alpha \in \Phi \rangle\), then \(\eunit(L) \leq \eunit'(L) \leq \eunit(P, L)\). At first we will prove that \(\eunit'(L)\) is closed under conjugation by elements \(x \in U_\delta(\widehat L)\) and these conjugations are continuous at \(1 \in \eunit'(L) = \eunit(P, L)\). It is sufficient to prove the claim for subgroups \({^{U_{-\alpha}(L_0)} U_\alpha(L)}\) (with the base \(\{{^{U_{-\alpha}(L_0)} U_\alpha(L \cdot s)} \mid s \in S\}\)), which are mapped into \(\eunit(P, L)\) by conjugation.

If \(\alpha\) and \(\delta\) are linearly independent, then
\begin{align*}
[U_\delta(\widehat L), {^{U_{-\alpha}(L_0)} U_\alpha(L \cdot s)}] &\leq \langle \eunit'(L \cdot s), {^{U_{-\alpha}(L_0)} [{^{U_{-\alpha}(L_0)} U_\delta(\widehat L)}, U_\alpha(L \cdot s)]} \rangle \leq\\
&\leq \langle \eunit'(L \cdot s), \prod_{\substack{i \alpha + j \delta \in \Phi\\ j > 0}} U_{i \alpha + j \beta}(L \cdot s) \rangle \leq \eunit'(L \cdot s).
\end{align*}

If \(\alpha = \pm \delta\) is short, then \(\alpha = \beta + \gamma\), where \(\beta\) and \(\gamma\) are also short, hence
\begin{align*}
{^{U_\alpha(\widehat L) U_{-\alpha}(\widehat L)} U_\alpha(L \cdot s^2)} &\leq [{^{U_\alpha(\widehat L) U_{-\alpha}(\widehat L)} U_\beta(L \cdot s)}, {^{U_\alpha(\widehat L) U_{-\alpha}(\widehat L)} U_\gamma(L_0 \cdot s)}] \leq\\
&\leq [U_\beta(L \cdot s) U_{-\gamma}(L \cdot s), U_\gamma(L \cdot s) U_{-\beta}(L \cdot s) U_\gamma(L_0) U_{-\beta}(L_0)] \leq\\
&\leq \eunit'(L \cdot s).
\end{align*}

If \(\alpha = \pm\delta\) is ultrashort, then \(\alpha = \beta + \gamma\), where \(\beta\) is short and \(\gamma\) is ultrashort, hence
\begin{align*}
{^{U_\alpha(\widehat L) U_{-\alpha}(\widehat L)} U_{2\alpha}(L \cdot s^2)} &\leq [{^{U_\alpha(\widehat L) U_{-\alpha}(\widehat L)} U_\beta(L_0 \cdot s)}, {^{U_\alpha(\widehat L) U_{-\alpha}(\widehat L)} U_{\alpha + \gamma}(L \cdot s)}] \leq\\
&\leq [U_\beta(L_0) U_{-\alpha - \gamma}(L_0) U_{-\gamma}(L_0) U_\beta(L \cdot s) U_{-\alpha - \gamma}(L \cdot s) U_{-\gamma}(L \cdot s),\\
&\qquad U_{\alpha + \gamma}(L \cdot s) U_{-\beta}(L \cdot s) U_{\gamma}(L \cdot s)] \leq \eunit'(L \cdot s)
\end{align*}
and
\begin{align*}
{^{U_{\alpha}(\widehat L) U_{-\alpha}(\widehat L)} U_\alpha(L \cdot s^2)} &= \langle \eunit'(L \cdot s), [{^{U_\alpha(\widehat L) U_{-\alpha}(\widehat L)}U_\gamma(L \cdot s)}, {^{U_\alpha(\widehat L) U_{-\alpha}(\widehat L)} U_\beta(L_0 \cdot s)}] \rangle \leq\\
&\leq \langle \eunit'(L \cdot s), [U_{\alpha + \gamma}(L \cdot s) U_{-\beta}(L \cdot s) U_\gamma(L \cdot s),\\
&\qquad U_\beta(L_0) U_{-\alpha - \gamma}(L_0) U_{-\gamma}(L_0) U_\beta(L \cdot s) U_{-\alpha - \gamma}(L \cdot s) U_{-\gamma}(L \cdot s) \rangle \leq\\
&\leq \eunit'(L \cdot s).
\end{align*}

It can be proved similarly that \({^x (-)} \colon \eunit(S^{-1} P, S^{-1} L) \rar \eunit(S^{-1} P, S^{-1} L)\) is continuous at \(1\) for \(x \in \eunit(S^{-1} P, S^{-1} \widehat L)\) and that both maps \((g, x) \mapsto [g, x]\) are continuous at \((1, 1)\). It remains to prove continuity of the map \(x \mapsto [g, x]\), where \(g \in \eunit(P, L)\) (and similarly for \(\eunit(S^{-1} P, S^{-1} L)\)). Without loss of generality, \(g \in U_\alpha(L)\), then \([U_\delta(\widehat L \cdot s), U_\alpha(L)] \leq \prod_{\substack{i \alpha + j \delta \in \Phi\\ i, j > 0}} U_{i \alpha + j \delta}(L \cdot s)\) for linearly independent or codirectional \(\alpha\) and \(\delta\). If \(\alpha = -\beta\) is short and \(\alpha = \beta + \gamma\), where both \(\beta\) and \(\gamma\) are short, then
\begin{align*}
[U_{-\alpha}(\widehat L \cdot s^2), U_\alpha(L)] = [\langle U_{-\beta}(\widehat L \cdot s) U_{-\gamma}(\widehat L \cdot s) \rangle, U_\alpha(L)] \leq \eunit'(L \cdot s),
\end{align*}
and similarly in the case when \(\alpha = -\beta\) is ultrashort.
\end{proof}

Note that the embeddings \(\eunit(P, L) \rar \eunit(P, L')\) and \(\eunit(S^{-1} P, S^{-1} L) \rar \eunit(S^{-1} P, S^{-1} L')\) are continuous, if the level \(L\) is contained in \(L'\) (that is \(\lfloor I \rfloor \leq \lfloor I' \rfloor\) and \(\lfloor \Gamma \rfloor \leq \lfloor \Gamma' \rfloor\)).

\begin{lemma}\label{BoundaryGenerators}
Let \(l \geq 3\) and \(L\) be a level. Then the group \(\eunit(P, L)\) is generated by the subgroups \(U_{\pm \mathrm e_l \pm \mathrm e_i}(L)\) for \(1 - l \leq i \leq l - 1\) as a subgroup of \(C^* = \gl(P)\) normalized by \(\eunit(P, \widehat L)\) (and \(\eunit(P, \widehat L)\) itself is generated by similar subgroups as an abstract group).
\end{lemma}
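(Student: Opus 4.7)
The plan is to combine Lemma \ref{GeneratorsOfElementaryLevelGroup} with the Steinberg-type relations of Lemma \ref{LevelTransvections}. Let $G$ be the subgroup of $\gl(P)$ generated by $U_{\pm\mathrm e_l\pm\mathrm e_i}(L)$ for $1-l\le i\le l-1$, closed under conjugation by $\eunit(P,\widehat L)$. Since $\widehat I\supseteq(1-e_0)C(1-e_0)$ and $\widehat\Gamma\supseteq\Lambda$, the level $L_0$ of $\eunit(P)$ is contained in $\widehat L$, so $U_{-\alpha}(L_0)\subseteq\eunit(P,\widehat L)$ and Lemma \ref{GeneratorsOfElementaryLevelGroup} reduces the task to showing $U_\alpha(L)\subseteq G$ for every $\alpha\in\Phi$.

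Roots of the form $\pm\mathrm e_l\pm\mathrm e_i$ (short, $i\ne\pm l$) and $\pm\mathrm e_l$ (ultrashort, $i=0$) are in $G$ by construction. For the long boundary roots $\pm 2\mathrm e_l$, I pick any $0\ne j\ne\pm l$ and apply T5 to get $\tau_l(\phi(ab))=[\tau_{-l,j}(a),\tau_{j,l}(b)]$, where both factors are boundary (of $L$- and $\widehat L$-level respectively). For a non-boundary short root $\alpha=\mathrm e_i-\mathrm e_j$ with $i,j\ne\pm l$, T4 gives $\tau_{i,j}(ab)=[\tau_{i,l}(a),\tau_{l,j}(b)]$; for a non-boundary long root $\alpha=\pm 2\mathrm e_k$ with $k\ne\pm l$, T5 gives $\tau_k(\phi(ab))=[\tau_{-k,l}(a),\tau_{l,k}(b)]$; for a non-boundary ultrashort root $\alpha=-\mathrm e_k$ with $k\ne\pm l$, T8 gives
\[
\tau_k\bigl(\dotminus h\cdot(-a)\bigr)=\tau_{-l,k}\bigl(\tr(h)a\bigr)^{-1}\,[\tau_l(h),\tau_{l,k}(a)]
\]
with $h\in\Gamma\cdot e_l$ and $a\in e_l\widehat I e_k$. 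In each case one commutator factor belongs to a boundary $U(L)$-subgroup and the other to a boundary $U(\widehat L)$-subgroup (hence to $\eunit(P,\widehat L)$), and the correction $\tau_{-l,k}(\tr(h)a)$ in T8 is itself a boundary $L$-level generator since $\tr(\Gamma)\subseteq I$ by the level axioms and $I\widehat I\subseteq I$.

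To ensure the resulting commutator products exhaust the target root subgroup, I invoke the density hypothesis $(1-e_0)Ce_lC(1-e_0)=(1-e_0)C(1-e_0)$. This lets me write $e_j=\sum_m u_mv_m$ with $u_m\in e_jCe_l$, $v_m\in e_lCe_j$; combined with the level-axiom inclusions $I\cdot(1-e_0)C(1-e_0)\subseteq I$, $(1-e_0)C(1-e_0)\subseteq\widehat I$, and $\Gamma\cdot(1-e_0)C(1-e_0)\subseteq\Gamma$, any $\iota\in e_iIe_j$ is expressible as $\sum_m(\iota u_m)v_m$ with $\iota u_m\in e_iIe_l$ and $v_m\in e_l\widehat Ie_j$, and analogous decompositions handle the long and ultrashort cases.

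The main obstacle is the ultrashort case: the right action of $\mathcal A$ on $\mathcal H$ is non-additive in its second argument (NQ6 introduces a twist $\phi(\inv{a'}\tr(h)a)$), so expanding $h'\cdot\sum_m u_mv_m$ introduces $\phi(I)$ corrections. One handles these by first establishing $U_{-2\mathrm e_k}(L)\subseteq G$ via the long-root case above, and then absorbing the $\phi$-corrections into that subgroup. For the second claim of the lemma, specializing $L=\widehat L$ in the same case analysis shows each non-boundary $U_\alpha(\widehat L)$ is a commutator of boundary $U(\widehat L)$-subgroups, and commutators lie in the abstract subgroup they generate, so no external conjugation is required.
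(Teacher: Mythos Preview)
Your proof is correct and follows the same route the paper has in mind: the paper's own proof is the single sentence ``This follows trivially from lemma \ref{LevelTransvections},'' and what you have written is precisely the expansion of that sentence via the commutator relations T4, T5, T8. Your invocation of Lemma \ref{GeneratorsOfElementaryLevelGroup} is harmless but not needed: already from the definition \(\eunit(P,L)={^{\eunit(P)}\eunit(L)}\) together with \(\eunit(P)\leq\eunit(P,\widehat L)\) it suffices to place each \(U_\alpha(L)\) inside the group \(G\) you introduce, which is exactly the reduction you then carry out.
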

\begin{proof}
This follows trivially from lemma \ref{LevelTransvections}.
\end{proof}

\section{General level groups}

Recall that a quasi-finite \(K\)-algebra is a direct limit of finite \(K\)-algebras (one can consider it as a noncommutative integrality). This condition on an algebra has equivalent reformulations (which are written in \cite{NonabelianBak}), and these reformulations imply that a subalgebra \(B\) of a quasi-finite algebra \(A\) is quasi-finite itself (we also will use the equality \(B^* = A^* \cap B\)). It is also useful to note that quasi-finiteness over \(K\) is equivalent to quasi-finiteness over \(\herm(K)\), since \(K\) is integral over \(\herm(K)\) (the integral dependence equation for \(k \in K\) is \(k^2 - k(k + \inv k) + (k \inv k) = 0\)). Finally, we will use that in a quasi-finite algebra one-sided invertibility is equivalent to a two-sided one.

We a going to prove the inclusion \([\gunit(P, L), \eunit(P, \widehat L)] \subseteq \eunit(P, L)\) for all augmented levels \(L\) if \(C\) is quasi-finite over \(K\) under certain additional assumption. First of all we will prove one lemma for \(K\) local.

Recall that \(\gunit'(P, L) = \{g \in \gl(P) \mid [\{g, g^{-1}\}, \eunit(P, \widehat L)] \subseteq \unit(P, L)\}\) for an augmented level \(L\). Clearly, this set is closed under conjugation by \(\eunit(P, \widehat L)\) and under multiplication by \(\eunit(P, L)\) from both sides. Let \(\gunit'_{\widehat k}(P, L) = \{g \in \gunit'(P, L) \mid e_k \alpha(g) = e_k \alpha(g) e_k = \alpha(g) e_k\}\) for all \(k \neq 0\). Obviously, \([\gunit'_{\widehat k}(P, L), \eunit(P, \widehat L)] \subseteq \eunit(P, L)\) by lemma \ref{BoundaryGenerators}.

\begin{lemma}\label{LocalLevi}
Suppose that \(l \geq 3\), \(K\) is a local ring with the maximal ideal \(\mathfrak m\), \(C\) is a finite \(K\)-algebra, \({^{\alpha(g^{\pm 1})} a} \equiv a \mod I\) and \(\eps \cdot {^{\alpha(g^{\pm 1})} a} \in \Gamma\) for all \(a \in e_{\pm k} \widehat I e_{\mp k}\), and also \(e_k \alpha(g) e_k\) is invertible in \(e_k \mathcal A e_k\). Then there exist \(h, h' \in \eunit(P, L)\) such that \(e_k \alpha(hgh') = e_k \alpha(hgh') e_k = \alpha(hgh') e_k\).
\end{lemma}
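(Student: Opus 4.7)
The plan is a Gauss-style elimination with pivot $u = e_k \alpha(g) e_k \in e_k \mathcal A e_k$, which is invertible by hypothesis. I will construct a right-multiplier $h' \in \eunit(P, L)$ such that $e_k \alpha(g h')(1 - e_k) = 0$, and then by a symmetric argument a left-multiplier $h \in \eunit(P, L)$ killing $(1 - e_k) \alpha(h g h') e_k$; together these give $e_k \alpha(hgh') = \alpha(hgh') e_k = e_k \alpha(hgh') e_k$.

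For the right-clearing, I decompose $e_k \alpha(g)(1 - e_k) = \sum_{j \neq k} e_k \alpha(g) e_j$ and handle each block. For short-root indices $j$ with $0 \neq j \neq \pm k$, I take $\tau_{k, j}(a_j)$ with $a_j = -u^{-1} e_k \alpha(g) e_j$; a direct computation using $a_j^2 = 0$ and $a_j \inv{a_j} = 0$ shows that right-multiplication by this transvection eliminates the $(k, j)$-entry without disturbing the pivot. For $j = -k$ and, if applicable, $j = 0$, the correction is a long-root transvection $\tau_{-k}(h_{-k})$ whose parameters are chosen so that its short-root and $\phi$-parts together annihilate both $e_k \alpha(g) e_{-k}$ and $e_k \alpha(g) e_0$. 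The assumption $l \geq 3$ is used to ensure that enough short roots transverse to $e_k$ are available for the recipe to close up.

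The main obstacle is verifying that the coefficients $a_j$ and $h_{-k}$ actually lie in $I$ and $\Gamma$, so that the transvections belong to $\eunit(P, L)$ rather than only to $\eunit(P)$. This is precisely where the congruences ${^{\alpha(g^{\pm 1})} a} \equiv a \mod I$ and $\eps \cdot {^{\alpha(g^{\pm 1})} a} \in \Gamma$ for $a \in e_{\pm k} \widehat I e_{\mp k}$ are used, together with the local-finite hypothesis on $K$: the latter allows me to reduce modulo the maximal ideal $\mathfrak m$ via a Nakayama-type argument and to invoke the fact that one-sided invertibility in $e_k \mathcal A e_k$ coincides with two-sided invertibility, while the congruences give exactly the control on the off-diagonal blocks needed to conclude that $u^{-1} e_k \alpha(g) e_j$ lands in $I$. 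Once the transvections are confirmed inside $\eunit(P, L)$, the final verification that $\alpha(hgh')$ commutes with $e_k$ is a direct block-matrix computation using lemma \ref{LevelGroup} and the commutator identities T1--T8 of lemma \ref{LevelTransvections}.
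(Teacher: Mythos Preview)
Your overall shape is right --- this \emph{is} Gauss elimination with pivot $u = e_k\alpha(g)e_k$, and the paper does the symmetric thing (it clears the $k$-th column by left multiplication, then the $k$-th row). But the step you flag as ``the main obstacle'' is precisely where your argument has a genuine gap, and your sketch does not close it.

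The difficulty is that the hypotheses give you no direct control over $u^{-1}$ or over $e_k\alpha(g)e_j$; they only tell you that ${}^{\alpha(g^{\pm 1})}a \equiv a \bmod I$ and $\eps\cdot{}^{\alpha(g^{\pm 1})}a \in \Gamma$ for $a\in e_{\pm k}\widehat I e_{\mp k}$. From these alone there is no way to conclude that $u^{-1}e_k\alpha(g)e_j \in I$: the element $u^{-1}$ lives in $e_k\mathcal A e_k$, not in $e_k\widehat I e_{-k}$, so the congruences simply do not apply to it. Your appeal to a ``Nakayama-type argument'' is a red herring; the paper's proof does not invoke Nakayama at all, and I do not see how reducing mod $\mathfrak m$ would produce the needed containment. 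Likewise, one-sided versus two-sided invertibility of $u$ is not the issue --- two-sided invertibility is already assumed.

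What the paper actually does is the key trick you are missing: it realizes the inverse pivot in the specific form
\[
u^{-1} = x \in e_k\widehat I e_{-k}\,\alpha(g^{-1})\,e_{-k}\widehat I e_k,
\]
that is, as a sum of terms $a\,\alpha(g^{-1})\,b$ with $a\in e_k\widehat I e_{-k}$ and $b\in e_{-k}\widehat I e_k$. Once $x$ is written this way, the transvection parameter for clearing row $i\neq k$ becomes
\[
e_i\alpha(g)x = \sum e_i\bigl({}^{\alpha(g)}a\bigr)b,
\]
and now the first hypothesis gives $e_i({}^{\alpha(g)}a) = e_i a + e_i({}^{\alpha(g)}a - a) \in 0 + I$, so the whole thing lies in $I\cdot\widehat I \leq I$. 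The $\Gamma$-membership for the ultrashort step (handling $e_0$ and $e_{-k}$ together) comes out the same way: the paper takes $h = \eps\cdot\alpha(g)x$, and the representation of $x$ lets the second hypothesis $\eps\cdot{}^{\alpha(g)}a\in\Gamma$ apply termwise. The existence of such an $x$ is the content of the paper's claim $e_k \in e_k\widehat I e_{-k}\alpha(g^{-1})e_{-k}\widehat I e_k\,\alpha(g)e_k$, which uses invertibility of $e_{-k}\alpha(g^{-1})e_{-k}$.

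So: keep your elimination scheme, but replace the hand-wave about ``the congruences give control'' with this explicit factorization of $u^{-1}$ through $e_k\widehat I e_{-k}\alpha(g^{-1})e_{-k}\widehat I e_k$. That is the idea that makes the proof go.
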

\begin{proof}
Note that always \(e_i \alpha(g) e_k \widehat I e_{-k} \alpha(g^{-1}) e_{-k} \widehat I e_k \leq I\) and \(e_k \in e_k \widehat I e_{-k} \alpha(g^{-1}) e_{-k} \widehat I e_k \alpha(g) e_k\) for all \(0 \neq i \neq \pm k\) (since \(e_{-k} \alpha(g^{-1}) e_{-k}\) is invertible in \(e_{-k} \mathcal A e_{-k}\)). If \(e_k = x \alpha(g) e_k\) for \(x \in e_k \widehat I e_{-k} \alpha(g^{-1}) e_{-k} \widehat I e_k\), then \(e_i \alpha(\tau_{i, k}(-e_i \alpha(g) x) g) e_k = 0\). Since multiplication by such a transvection preserves the condition, we may assume that \(e_i \alpha(g) e_k = 0\). Now let \(h = \eps \cdot \alpha(g) x \in \Gamma\), then \((1 - e_k)\alpha(\tau_k(h \cdot (-1)) g)e_k = 0\) and multiplication by this transvection also preserves the condition on \(g\).

Applying the same transformations for \(g^{-1}\) instead of \(g\) (and \(-k\) instead of \(k\)) we can obtain the equality \(e_k \alpha(g) = e_k \alpha(g) e_k\).
\end{proof}

\begin{prop}\label{LocalSplitting}
Suppose that \(l \geq 3\), \(K\) is a local ring with the maximal ideal \(\mathfrak m\), \(C\) is a finite \(K\)-algebra and \(e_0 C e_0\) is generated by less than \(4l^2\) elements as a \(K\)-module. Then \(\gunit'(P, L) = \eunit(P, L) \gunit'_{\widehat k}(P, L)\) for all \(k \neq 0\).
\end{prop}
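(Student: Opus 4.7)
The plan is to reduce to Lemma \ref{LocalLevi} after securing the missing invertibility hypothesis. Given $g \in \gunit'(P, L)$, the commutation conditions ${^{\alpha(g^{\pm 1})} a} \equiv a \mod I$ and $\eps \cdot {^{\alpha(g^{\pm 1})} a} \in \Gamma$ for $a \in e_{\pm k} \widehat I e_{\mp k}$ that Lemma \ref{LocalLevi} requires (in addition to invertibility of $e_k \alpha(g) e_k$) are automatic from the defining equations for $\gunit'(P, L)$ established in Lemma \ref{GeneralLevelGroup}. So the crux is to find $h_1, h_2 \in \eunit(P, L)$ such that $e_k \alpha(h_1 g h_2) e_k$ is invertible in $e_k \mathcal A e_k$, after which Lemma \ref{LocalLevi} produces $h_3, h_4 \in \eunit(P, L)$ with $h_3 h_1 g h_2 h_4 \in \gunit'_{\widehat k}(P, L)$.

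First I would pass to the residue field: since $C$ is finite over the local ring $K$, the subalgebra $e_k \mathcal A e_k$ is finite over $K$, and Nakayama's lemma gives that invertibility in it is equivalent to invertibility in the finite-dimensional $K/\mathfrak m$-algebra $e_k(\mathcal A / \mathfrak m \mathcal A) e_k$. Over the residue field, $\alpha(g) \in \mathcal A^{*}$ gives the unimodularity identity $\sum_{i}(e_k \alpha(g) e_i)(e_i \alpha(g)^{-1} e_k) = e_k$. I would then use the hyperbolic transvections $\tau_{k, i}(a)$ from the left and $\tau_{i, k}(a)$ from the right for $i \neq 0, \pm k$, together with the odd transvections $\tau_k(h)$, to perform block row and column operations on $\alpha(g)$ that concentrate a unimodular combination at the $(k, k)$-position. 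A careful combination of these operations, patterned on Bass--Vaserstein stable-range arguments, makes the $(k, k)$-block invertible modulo $\mathfrak m$, and Nakayama lifts invertibility back to $e_k \mathcal A e_k$.

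The final step is to rearrange the two-sided factorization into a left factorization $g = h \cdot g'$ with $h \in \eunit(P, L)$ and $g' \in \gunit'_{\widehat k}(P, L)$. This uses that $\gunit'_{\widehat k}(P, L)$ is itself a subgroup and that $\gunit'(P, L)$ is stable under two-sided multiplication by $\eunit(P, L)$: starting from the two-sided conclusion $h_3 h_1 g h_2 h_4 \in \gunit'_{\widehat k}$ of Lemma \ref{LocalLevi}, one absorbs the right-hand $\eunit$-factor into the $\gunit'_{\widehat k}$-factor by an auxiliary elementary adjustment that preserves $\widehat k$-diagonality.

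The main obstacle, and the origin of the hypothesis that $e_0 C e_0$ be generated as a $K$-module by fewer than $4l^2$ elements, is the anisotropic block $e_0 \alpha(g) e_0$: the hyperbolic transvections $\tau_{i, j}$ act trivially on $e_0 \mathcal A e_0$, so every manipulation of the anisotropic data must be routed through the odd transvections $\tau_k(h)$. The bound $4l^2 = (2l)^2$ matches the number of hyperbolic index pairs $(i, j)$ with $i \neq j$ and $i, j \neq 0$, which provide the available ``slots'' to absorb the anisotropic data by Witt-type moves. Keeping track of which generators of $e_0 C e_0$ are absorbed into which hyperbolic positions, and verifying that the resulting transvections remain in $\eunit(P, L)$ rather than only in $\eunit(P, \widehat L)$, is the delicate combinatorial part of the argument.
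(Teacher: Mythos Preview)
There is a genuine gap in your approach at the invertibility step. You propose to achieve invertibility of \(e_k \alpha(g) e_k\) by multiplying \(g\) on both sides by elements of \(\eunit(P, L)\), but the parameters of those transvections lie in the level ideal \(I\), which may be arbitrarily small (even with \((1-e_0)I(1-e_0) = 0\)). In that situation your row and column operations are trivial and cannot move any unimodular information into the \((k,k)\)-block. The paper instead \emph{conjugates} \(g\) by an element \(h \in \eunit(P, \widehat L)\), whose parameters range over the full enveloping level \(\widehat I \supseteq (1-e_0)C(1-e_0)\); this is what provides enough freedom. Concretely, the paper first finds \(p \in e_l \widehat I\) and \(q \in \widehat I e_l\) with \(p\,({}^{\alpha(g)}e_l)\,q\) invertible, and then by a rank-maximization argument over \(B/\mathrm J(B)\) replaces \(p\) by \(e_l \alpha(h)\) for a suitable \(h \in \eunit(P, \widehat L)\), after which Lemma~\ref{LocalLevi} is applied to \({}^h g\). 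Your Bass--Vaserstein stable-range picture does not capture this: the problem is not unimodularity of a row of \(\alpha(g)\) but rather controlling where the subalgebra \({}^{\alpha(g)}((1-e_0)\mathcal A(1-e_0))\) lands.

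Your explanation of the \(4l^2\) bound is also off. It is not a count of hyperbolic ``slots'' (and \((2l)^2\) is not the number of pairs \((i,j)\) with \(i \neq j\)); it is a dimension obstruction. After reducing modulo the Jacobson radical over the residue field, each simple factor of \({}^{\alpha(g)}((1-e_0)f_i A(1-e_0))\) has \(K/\mathfrak m\)-dimension at least \((2l)^2 = 4l^2\), since \((1-e_0)\) has rank \(\geq 2l\) in each simple factor of \(C\). The image of this semi-simple subalgebra in \(e_0 B e_0 / \mathrm J(e_0 B e_0)\) embeds into a product of two pieces each of dimension strictly less than \(4l^2\), hence cannot contain any of those simple factors and must lie in the radical. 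This forces \({}^{\alpha(g)}e_l\) to live, modulo \(\mathrm J(B)\), inside \(\widehat I f_i \widehat I\) for the appropriate \(i\), which is exactly what yields the required \(p\) and \(q\). Without this dimension argument (or a substitute), there is no mechanism to prevent \({}^{\alpha(g)}e_l\) from being swallowed by the anisotropic block, and your sketch does not supply one.
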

\begin{proof}
Without loss of generality, \(k = l\). We will prove that every \(g \in \gunit'(P, L)\) is in the set \(\eunit(P, L) \gunit'_{\widehat l}(P, L)\). We will conjugate it by elements from \(\eunit(P, \widehat L)\) and multiply it from both sides by elements from \(\eunit(P, L)\), until we obtain an element from \(\gunit'_{\widehat l}(P, L)\). For convenience we will work in \(\mathcal A\), applying conjugations and multiplications directly to \(\alpha(g)\).

At first let us prove that there exist \(p \in e_l \widehat I\) and \(q \in \widehat I e_l\) such that \(p \, {^{\alpha(g)} e_l} \, q\) in invertible in \(e_l \widehat I e_l\). In order to do it we note that \({^{\alpha(g)} ((1 - e_0) \widehat I (1 - e_0))} \subseteq (1 - e_0) \widehat I (1 - e_0) + I\). Without loss of generality, \(K\) is a field and \(\mathcal A\) is semi-simple (since we can take the factor by the Jacobson radical \(\mathrm J(\mathcal A)\)), then \(\mathcal A\) is a finite-dimensional \(K\)-algebra. Let \(A = \widehat I (1 - e_0) \widehat I + K\) and \(f_0, f_1, \ldots, f_m\) be liftings to \(A\) of the primitive central idempotents of \(A / \mathrm J(A)\) (such that \(f_i\) form a complete family of orthogonal idempotents), one can assume that \(f_i \in \widehat I (1 - e_0) \widehat I\) for all \(i \neq 0\), \(f_0 \widehat I (1 - e_0) \widehat I \leq \mathrm J(A)\) and \(f_0 = e_0 f_0 e_0\) (if \(A = \widehat I (1 - e_0) \widehat I\), then let \(f_0 = 0\)). Let also \(B = I + \widehat I (1 - e_0) \widehat I + K\). It is easy to show that the classes of \(f_i\) in \(B / \mathrm J(B)\) are primitive central idempotents for all \(i \neq 0\). Moreover, \((1 - e_0) A (1 - e_0) = (1 - e_0) B (1 - e_0)\) is semi-simple.

Let us fix an index \(1 \leq i \leq m\). Clearly, \(X = {^{\alpha(g)} ((1 - e_0) f_i A (1 - e_0))} \subseteq B\) is a semi-simple subalgebra (generally their identity elements are different) and \(f_j X \subseteq f_j A[\zeta] f_i \mathcal A = 0\) for all \(1 \leq j \leq m\) and \(j \neq i\) (since \(f_j \mathcal A (1 - e_0) f_i \subseteq A[\zeta]\)). Moreover, \(f_0 X \subseteq \mathrm J(B)\), because the image of \(f_0 X\) in \(B / \mathrm J(B)\) is a semi-simple subalgebra contained in \(e_0 B e_0 / \mathrm J(e_0 B e_0)\), all simple factors of \(X\) have dimension at least \(4l^2\), and \(e_0 B e_0 / \mathrm J(e_0 B e_0)\) can be embedded in the product \(e_0 B \zeta e_0 / \mathrm J (e_0 B \zeta e_0) \times e_0 B(1 - \zeta) e_0 / \mathrm J (e_0 B (1 - \zeta) e_0)\), where both factors have dimension strictly less than \(4l^2\). It follows that \(X \subseteq f_i X + \mathrm J(B) \subseteq \widehat I f_i \widehat I + \mathrm J(B)\), hence there are \(p \in e_l \widehat I\) and \(q \in \widehat I e_l\) such that \(p \, {^{\alpha(g)} e_l} \, q\) is invertible in \(e_l \widehat I e_l\).

Now let us show that \(p\) can be taken of type \(e_l\alpha(h)\), where \(h \in \eunit(P, \widehat L)\) and \(e_{-l} \alpha(h) = e_{-l}\). We will work in the algebra \(T = B / \mathrm J(B)\). Let \(p' = e_l \alpha(h)\) be such that \(h \in \eunit(P, \widehat L)\), \(e_{-l} \alpha(h) = e_{-l}\) and the ranks of \(p' \, {^{\alpha(g)} e_l} \, q\) are maximal possible (in every simple factor of \(T\)). Let us denote \(\alpha(hg) e_l \alpha(g^{-1}) q\) as \(f\). It is easy to see that \(e_i f e_l \in T e_l f e_l\) for all \(0 \neq i \neq \pm l\), otherwise we can multiply \(f\) by some element of type \(\tau_{l, i}(a)\) from the left, increasing some rank of \(e_l f\). Next, \(e_{-l} f e_l \in T e_l f e_l\), since otherwise we can multiply \(f\) by \(\tau_{l, 2 - l}(a) \tau_{l - 1, l - 2}(b) \tau_{l, l - 2}(c)\) from the left, increasing some rank of \(e_+ f e_l\). Finally, \(e_0 f e_l \in T e_l f e_l\), because otherwise we can multiply \(f\) by an element of type \(\tau_{-l}(h)\) from the left, increasing some rank of \((1 - e_0) f e_l\). Hence \(f e_l \in T e_l f e_l\), then \(e_l f e_l\) has the same ranks as \(e_l\), that is this element is invertible in \(e_l T e_l\).

Finally, lemma \ref{LocalLevi} can be applied to \({^h g}\).
\end{proof}

\begin{theorem}\label{CommutantFormula}
Suppose that \(l \geq 3\), \(C\) is a quasi-finite \(K\)-algebra and \(e_0 C e_0\) is generated by less than \(4l^2\) elements as a \(K\)-module. Then for any augmented level \(L\) there is the inclusion \([\gunit'(P, L), \eunit(P, \widehat L)] \subseteq \eunit(P, L)\).
\end{theorem}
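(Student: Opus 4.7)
The plan is absolute Noetherian reduction together with localization, with Proposition \ref{LocalSplitting} serving as the local model. Fix $g \in \gunit'(P, L)$ and $x \in \eunit(P, \widehat L)$; the goal is to strengthen the inclusion $[g, x] \in \unit(P, L)$ (automatic from the definition of $\gunit'$) to $[g, x] \in \eunit(P, L)$.

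First I would reduce to the case where $C$ is finite over $K$. Since $C$ is quasi-finite, write $C = \varinjlim_\lambda C_\lambda$ as a filtered colimit of finite $\herm(K)$-subalgebras closed under the involution and containing every idempotent $e_i$. The finitely many elements of $C$ occurring in $g^{\pm 1}$, $x$, and in the part of the level $L$ relevant to testing membership of $[g, x]$ in $\eunit(P, L)$ all lie inside some $C_\lambda$, and the generator bound for $e_0 C e_0$ is inherited (or can be arranged to hold) for sufficiently large $\lambda$. Restricting $L$ to $C_\lambda$ reduces the problem to a finite $K$-algebra.

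Next, localize at every maximal ideal $\mathfrak m$ of $\herm(K)$. Set $S = \herm(K) \setminus \mathfrak m$; then $S^{-1} C$ is finite over the local ring $S^{-1} \herm(K)$ and all hypotheses of Proposition \ref{LocalSplitting} are preserved. That proposition yields a factorization $\Psi_S(g) = h \, g'$ with $h \in \eunit(S^{-1} P, S^{-1} L)$ and $g' \in \gunit'_{\widehat l}(S^{-1} P, S^{-1} L)$. Using the commutator identity $[h g', x] = h \, [g', x] \, h^{-1} \cdot [h, x]$, the remark immediately before Proposition \ref{LocalSplitting} that $[\gunit'_{\widehat l}(P, L), \eunit(P, \widehat L)] \subseteq \eunit(P, L)$ (a direct consequence of Lemma \ref{BoundaryGenerators}), and the continuous normalization of $\eunit(S^{-1} P, S^{-1} L)$ by $\eunit(S^{-1} P, S^{-1} \widehat L)$ furnished by Lemma \ref{GeneratorsOfElementaryLevelGroup}, we deduce $\Psi_S([g, x]) \in \eunit(S^{-1} P, S^{-1} L)$ for every $\mathfrak m$.

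The final step is to glue these local conclusions into the global statement $[g, x] \in \eunit(P, L)$, and I expect this to be the main obstacle. Since $\Psi_S$ is generally not injective, membership in $\eunit(S^{-1} P, S^{-1} L)$ only witnesses $[g, x]$ as a finite word in elementary generators modulo an $s$-torsion defect for some $s \in S$. As $\mathfrak m$ varies, the accumulated denominators $s_{\mathfrak m}$ generate the unit ideal in $\herm(K)$. Combined with the base property of $\Omega_{\herm(K)^\bullet}(P, L)$ and the continuity of conjugation by $\eunit(P, \widehat L)$ on $\eunit(P, L)$ from Lemma \ref{GeneratorsOfElementaryLevelGroup}, a partition-of-unity argument in the spirit of the localization-completion method of \cite{BakHazratVav} assembles the local decompositions into a global one. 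The delicate point is tracking that the various $h$-factors produced at different primes can be combined without reintroducing denominators, which relies on exploiting a relation $1 = \sum_i k_i s_{\mathfrak m_i}^N$ to absorb each $L \cdot s_{\mathfrak m_i}^N$ back into $L$.
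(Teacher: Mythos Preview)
Your overall architecture (Noetherian reduction, then localize at each maximal ideal of \(\herm(K)\), then invoke Proposition~\ref{LocalSplitting}) matches the paper's. The gap is in the gluing step, and it is a real one: fixing \(x\) at the outset is exactly what prevents the argument from closing.

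The difficulty is that from \(\Psi_{\mathfrak m}([g,x]) \in \eunit(S^{-1}P, S^{-1}L)\) for every \(\mathfrak m\) you cannot conclude \([g,x]\in\eunit(P,L)\); ``being elementary'' is not a local property, and a partition-of-unity patching of the factorizations \(\Psi_{\mathfrak m}(g)=h_{\mathfrak m} g'_{\mathfrak m}\) does not assemble into anything global. The paper avoids this by \emph{not} fixing \(x\). Instead, for a fixed \(g\) it considers the conductor ideal
\[
\mathfrak a=\{k\in\herm(K)\mid [g,\eunit(P,\widehat L\cdot k)]\subseteq\eunit(P,L)\}
\]
and shows \(\mathfrak a=(1)\) by proving \(\mathfrak a\not\subseteq\mathfrak m\) for each maximal \(\mathfrak m\). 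At \(\mathfrak m\), Proposition~\ref{LocalSplitting} together with the continuity from Lemma~\ref{GeneratorsOfElementaryLevelGroup} gives, for any chosen \(H=\eunit(P,L\cdot s)\), some \(H'=\eunit(P,\widehat L\cdot ss')\) with \(\Psi_{\mathfrak m}([g,H'])\subseteq\Psi_{\mathfrak m}(H)\). Now comes the point your sketch is missing: choose \(s\) so that \(\Psi_{\mathfrak m}|_{Cs}\) is injective. Every \(h'\in H'\) satisfies \(h'-1\in Css'\subseteq Cs\), hence \([g,h']-1\in Cs\), and likewise every element of \(H\) is congruent to \(1\) modulo \(Cs\); injectivity on \(Cs\) then lifts the inclusion to \([g,H']\subseteq H\) in \(C^*\) itself. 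Thus \(ss'\in\mathfrak a\setminus\mathfrak m\).

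The moral: the ``smallness'' parameter \(s\) has to be inserted on the \(\widehat L\)-side \emph{before} forming the commutator, so that \([g,h']-1\) automatically lands in the injectivity range of \(\Psi_{\mathfrak m}\). With \(x\) fixed in advance there is no \(s\) to exploit, and no amount of patching recovers the lift.
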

\begin{proof}
Fix \(g \in \gunit'(P, L)\), then without loss of generality \(\herm(K)\) is Noetherian (for examply it is finitely generated over \(\mathbb Z\)), \(K\) and \(C\) are finite \(\herm(K)\)-algebras. Consider the ideal
\[\mathfrak a = \{k \in \herm(K) \mid [g, \eunit(P, \widehat L \cdot k)] \subseteq \eunit(P, L)\} \leqt K.\]
It is trivial to check that it is an ideal. Let \(\mathfrak m \leqt \herm(K)\) be a maximal ideal, then \(K_{\mathfrak m}\) is local and proposition \ref{LocalSplitting} can be applied to \(C_{\mathfrak m}\) as a \(K_{\mathfrak m}\)-algebra. An element \(\Psi_{\mathfrak m}(g)\) is a product of elements from \(\eunit(P_{\mathfrak m}, L_{\mathfrak m})\) and \(\diag(P_{\mathfrak m}, L_{\mathfrak m})\), hence for all \(H \in \Omega_{\mathfrak m}(P, L)\) there exists \(H' \in \Omega_{\mathfrak m}(P, \widehat L)\) such that \(\Psi_{\mathfrak m}([g, H']) \subseteq \Psi_{\mathfrak m}(H)\). We can take \(H = \eunit(P, L \cdot s)\) and \(H' = \eunit(P, \widehat L \cdot ss')\) for such \(s\) that \(\Psi_{\mathfrak m}|_{Cs}\) is injective, then \([g, H'] \subseteq H\) and \(ss' \in \mathfrak a \not \leq \mathfrak m\). Therefore, \(\mathfrak a = (1)\).
\end{proof}

Therefore, \(\gunit(P, L) \leq \gunit'(P, L) = \{g \in \gl(P) \mid [\{g, g^{-1}\}, \eunit(P, \widehat L)] \subseteq \eunit(P, L)\} = \gunit(P, \lfloor L \rfloor)\) under assumptions of the theorem.

Let us give an example that shows necessity of the condition on \(e_0 C e_0\). Let \(S = R = K\) with the trivial involution, \(2 \in K^*\), \(\lambda_S = \lambda_R = 1\), \(P_i\) are free \(K\)-modules of rank \(1\) for all \(i \neq 0\) and \(P_0 \cong \mathrm H(P^l)\) as a hermitian space. Let also \(A_S = A_K = \Heis(K)\) and \(A_R = \Heis(B) / \sum^\cdot_{i \neq 0} q(P_i)\) (in this case both Heisenberg groups are abelian). Then \(\unit(P) = \{g \in \gl(P) \mid g^{\mathrm T} b = b g^{-1}, g_{0, i} = 0\text{ for } i \neq 0\}\), where \(g^{\mathrm T}\) is the transpose, \(b\) is the matrix of the split symmetric bilinear form, and \(g_{0, i}\) is \(e_0 g e_i\) in out previous notation. Since \(g \in \unit(P)\) is an orthogonal operator, it follows that \(g_{i, 0} = 0\). In other words, \(\unit(P) = \mathrm O(2l, K) \times \mathrm O(P_0)\), where the first factor means the split orthogonal group and \(\eunit(P) = \mathrm{EO}(2l, K)\). Further, \(\unit(P, \lceil L_0 \rceil) = \mathrm O(2l, K) \times \gl(P_0)\). There is some augmented level \(L_0'\) (which corresponds to the level \(L_0\)) such that \(\unit(P, L_0') = \unit(P)\) and \(\gunit(P, L_0') = (\mathrm{GO}(2l, K) \times \mathrm{GO}(P_0)) \rtimes \langle \sigma \rangle\), where \(\sigma\) isometrically swaps \(P_0\) and \(\bigoplus_{i \neq 0} P_i\), \(\sigma^2 = \id\). Clearly, \([\gunit(P, L_0'), \eunit(P)] \not \leq \eunit(P)\). Worse, there does not exists the greatest group with the level \(L_0\), since \(\langle \gunit(P, L_0'), \unit(P, \lceil L_0 \rceil) \rangle = (\gl(2l, P) \times \gl(P_0)) \rtimes \langle \sigma \rangle\) has a strictly greater level.

\section{Global extraction of transvections}

Now we will prove the main theorem. Let \(G \leq \gl(P)\) be a subgroup normalized by \(\eunit(P)\) and \(L\) be its level. We want to prove that \(G \leq \gunit(P, \lfloor L \rfloor)\). More precisely, it will be proved that if \(\eunit(P, L) \leq G\), then either \(G \leq \gunit(P, \lfloor L \rfloor)\), or \(G\) contains an elementary transvection not from \(\eunit(P, L)\).

It is simpler to prove the assertion in the local case, hence we need an auxiliary result that allows us to pass from the local case to the global one. Suppose that \(S \leq \herm(K)^\bullet\) is a multiplicative subset, \(H \in \Psi_S(\Omega_S(P, \widehat L))\), \(H' \in \Psi_S(\Omega_S(P, L))\) (and \(H' \leq H\)), \(g \in G\) and suppose that we found a sequence of elements \(g_0 = \Psi_S(g), g_1, \ldots, g_N\) in \(\gl(S^{-1} P) = (S^{-1} C)^*\) such that \(g_N \notin \eunit(S^{-1} P, S^{-1} L)\) is an elementary transvections (such a transvection will be called nontrivial) and every \(g_{i + 1}\) can be obtained from \(g_i\) by one of the rules
\begin{enumerate} % extraction rules
\item[EX1.] \(g_{i + 1} = f_{i, 0} \, {^{h_i} {g_i}} \, f_{i, 1}\), where \(h_i \in \eunit(S^{-1} P, S^{-1} \widehat L)\) and \(f_{i, 0}, f_{i, 1} \in \eunit(S^{-1}P, S^{-1}L)\);
\item[EX2.] \(g_{i + 1} = f_{i, 0} \, \prod_{k = 1}^{n_i} ([g_i, t_{i, k}]^{\pm 1} \, f_{i, k})\), where \(t_{i, k} \in \eunit(S^{-1} P, S^{-1} \widehat L))\), \(f_{i, k} \in \eunit(S^{-1} P, S^{-1} L))\), \(t_{i, k}^{h'_{i - 1} \ldots h'_0} \in H\) and \([t_{i, k}, b_i]^{h'_{i - 1} \ldots h'_0}, f_{i, k}^{h'_{i - 1} \ldots h'_0} \in H'\) (the elements \(h'_j \in \eunit(S^{-1} P, S^{-1} \widehat L)\) and \(b_i \in \gunit(S^{-1} P, \lfloor S^{-1} L \rfloor)\) will be defined below, their definitions use only the previous \(g_j\));
\item[EX3.] \(g_{i + 1} = g_i x_i\), where \(x_i \in \gunit(S^{-1} P, \lfloor S^{-1} L \rfloor)\);
\item[EX4.] \(g_{i + 1} = g_i^{-1}\), and if before this rule there was an application of the third rule, then somewhere after that application there was an application of the second rule.
\end{enumerate}

Usually we will not use the third rule at all until a certain moment, and after this moment we will not use the fourth one. In all lemmas about local extraction of transvections we will assume that \(g\) is obtained as result of such a sequence and that in this sequence after every application of the third rule there is an application of the second one. In the second rule we will usually use only one factor with \(f_{i, 0} = f_{i, 1} = 1\) and \(t_{i, 1}\) will usually be taken as a transvection from a small enough element of \(\Psi_S(\Omega_S(P, \widehat L))\) (depending of the previous \(g_j\) in order to satisfy the necessary conditions), that is transvections of type \(\tau_{p, q}(a s)\) or \(\tau_p(h \cdot s)\) for \(s \in S\) big enough.

It is easy to prove by induction that \(g_i = a_i b_i\) for some \(a_i\) and \(b_i \in \gunit(S^{-1} P, \lfloor S^{-1} L \rfloor)\). Also \(a_0 = \Psi_S(g)\) and either \(a_{i + 1} = {^{h'_i} a_i^{\pm 1}}\), or \(a_{i + 1} = f'_{i, 0} \, \prod_{k = 1}^{n'_i} ([a_i, t'_{i, k}] \, f'_{i, k})\) (in this case \(h'_i = 1\) and \(b_i = 1\)), where \(h'_i \in \eunit(S^{-1} P, S^{-1} \widehat L)\), \(t'_{i, k} \in \eunit(S^{-1} P, S^{-1} \widehat L)\), \(f'_{i, k} \in \eunit(S^{-1} P, S^{-1} L)\), \({f'_{i, k}}^{h'_{i - 1}, \ldots, h'_0} \in H'\), and \({t'_{i, k}}^{h'_{i - 1} \ldots h'_0} \in H\). In the unique nontrivial case of the second rule one may use the formulas \([a_i b_i, t_{i, k}] = [b_i, t_{i, k}] \, [[t_{i, k}, b_i], a_i] \, [a_i, t_{i, k}]\) and \([t_{i, k}, a_i b_i] = [t_{i, k}, a_i] \, [a_i, [t_{i, k}, b_i]] \, [t_{i, k}, b_i]\). Clearly, these \(a_i\) can be obtained using the same rules (after including additional steps), and for them in the first rule always \(f_{i, 0} = f_{i, 1} = 1\), all \(b_i\) constructed from this new sequence are equal to \(1\) (if \(f_{i, 0} = f_{i, 1} = 1\) in all applications of the first rule, one can take \(h'_i = h_i\)).

Let \(g_N\) be a nontrivial transvection of a root type \(\alpha\). Clearly, there exists a short root \(\beta \in \Phi\) such that the angle between \(\alpha\) and \(\beta\) is obtuse. There also exists \(a \in \widehat I \cdot s\) for \(s \in S\) big enough such that \(g_{N + 1} = [g_N, \tau_\beta(\frac a 1)]\) still is not in \(\eunit(S^{-1} P, S^{-1} L)\), but \(\tau_\beta(\frac a 1)^{h'_N \ldots h'_0} \in H\) and \([b_N, \tau_\beta(\frac a 1)]^{h'_N \ldots h'_0} \in H'\). Clearly, \(g_{N + 1}^{h'_N \ldots h'_0} \in \langle [\Psi_S(g), H], H' \rangle\) (the commutant can be written because we take a commutator at least at the \((N + 1)\)-st step).

In the following lemma a nontrivial transvection is a product of at most two elementary transvections (with an angle \(\frac \pi 4\) between roots) if the product is not in \(\eunit(S^{-1} P, S^{-1} L)\).

\begin{lemma}\label{ContinuityExtraction}
Suppose that \(l \geq 3\), \(\herm(K)\) is Noetherian, \(K\) and \(C\) are finite \(\herm(K)\)-algebras, \(L\) is a level, and \(\Omega^L_S(P) = \{\eunit(P, \langle a \rangle) \mid a \in (1 - e_0) \mathcal A (1 - e_0); \forall s \in S \enskip as \notin I\} \cup \{\eunit(P, \langle h \rangle) \mid h \in \mathcal H \cdot (1 - e_0); \forall s \in S \enskip h \cdot s \notin \Gamma\}\), where \(\langle a \rangle\) and \(\langle h \rangle\) are the least levels containing \(a\) and \(h\). Then:
\begin{itemize}
\item For all \(H \in \Psi_S(\Omega_S(P, \widehat L))\) and for all nontrivial transvections \(g \notin \eunit(S^{-1}P, S^{-1}L)\) there exists \(H' \in \Psi_S(\Omega^L_S(P))\) such that \(H' \leq {^H \langle g \rangle}\).
\item For all \(H \in \Psi_S(\Omega^L_S(P))\) and for all elementary transvections \(f \in \eunit(S^{-1}P, S^{-1}\widehat L)\) there is a nontrivial transvection in \({^f H}\).
\item For all \(H \in \Psi_S(\Omega_S(P, \widehat L))\), \(f \in \eunit(S^{-1} P, S^{-1}\widehat L)\) and for all nontrivial transvection \(g \notin \eunit(S^{-1} P, S^{-1} L)\) there is \(H' \in \Psi_S(\Omega^L_S(P))\) such that \(H' \leq {^H\langle {^fg} \rangle}\).
\end{itemize}
\end{lemma}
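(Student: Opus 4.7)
The plan is to treat each of the three bullets by case analysis on the root type of the relevant transvection, reducing in each case to a direct computation with the Chevalley-type commutator identities T1--T8 of Lemma~\ref{LevelTransvections}. First I would unpack what the definitions say: a generator of $\Psi_S(\Omega_S(P,\widehat L))$ is a transvection whose parameter lies in $\widehat L$ after clearing a common denominator $s_0 \in S$; the hypothesis $g \notin \eunit(S^{-1}P, S^{-1}L)$ says the parameter of $g$ has nonzero image in $S^{-1}\mathcal A / S^{-1}I$ (short case) or $S^{-1}\mathcal H / S^{-1}\Gamma$ (ultrashort case); and a generator of $\Omega^L_S(P)$ records exactly such a nonvanishing for a single-parameter level.

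\textbf{First bullet.} Write $g$ as $\tau_{i,j}(x)$ or $\tau_i(h)$, or as a product of two such whose root angle is $\pi/4$. In the short-root case, pick an auxiliary index $k$ with $k \neq 0, \pm i, \pm j$ (possible because $l \geq 3$) and a generator $\tau_{j,k}(\tfrac{b}{1}) \in H$ with $b \in \widehat I \cdot s_0$. Then T4 gives $[g, \tau_{j,k}(\tfrac{b}{1})] = \tau_{i,k}(\tfrac{xb}{1})$, a short transvection whose parameter has nonzero image in $S^{-1}\mathcal A / S^{-1} I$ for a suitable choice of $b$ (using that $x$ itself does). The subgroup $\eunit(P, \langle xb \rangle)$ lies in $\Omega^L_S(P)$ and, by T1--T4, in ${^H \langle g \rangle}$. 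The ultrashort case is symmetric via T5--T8: the commutator of $\tau_i(h)$ with a suitable short transvection in $H$ produces either a short transvection through T6 or an ultrashort one through T8, whose parameter witnesses the nonvanishing; the identities NQ1--NQ10 track both $\pi$- and $\tr$-components simultaneously. A two-factor $g$ reduces to a single-factor one by first extracting one summand as a commutator using the $A_2$ or $BC_2$ relations.

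\textbf{Second bullet.} Given $H = \Psi_S(\eunit(P, \langle a \rangle)) \in \Psi_S(\Omega^L_S(P))$ and an elementary transvection $f \in \eunit(S^{-1}P, S^{-1}\widehat L)$, the conjugate ${^f \tau_{i,j}(\tfrac{a}{1})}$ (respectively ${^f \tau_i(\tfrac{h}{1})}$) expands by T1--T8 into a product of elementary transvections; exactly one factor is a transvection of the original root type whose parameter differs from $\tfrac{a}{1}$ (respectively $\tfrac{h}{1}$) by a unit, so it is still outside $S^{-1}L$ by the defining property of $\Omega^L_S(P)$. That factor is the required nontrivial transvection in ${^f H}$. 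The \textbf{third bullet} then follows by composition: apply the second statement to produce a nontrivial transvection $g' \in {^f H}$, then apply the first to $g'$ in place of $g$ to obtain $H' \in \Psi_S(\Omega^L_S(P))$ with $H' \leq {^H \langle g' \rangle} \leq {^H \langle {^f g} \rangle}$.

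\textbf{Main obstacle.} The subtlest part is the bookkeeping for the ultrashort roots of $BC_l$: extracting a single transvection from an ultrashort commutator produces simultaneously a $\pi$-contribution and a $\tr$-contribution, and one must use NQ1--NQ10 together with the Heisenberg-like structure of $\mathcal H$ to isolate precisely the parameter whose image in $S^{-1}\mathcal H / S^{-1}\Gamma$ is nonzero, while keeping all auxiliary transvections genuinely inside the given base element $H$. The appearance of squared denominators $s_0^2$ in commutator arguments, already visible in the proof of Lemma~\ref{GeneratorsOfElementaryLevelGroup}, is a manifestation of this obstacle: it compensates for the parameter doubling in the short $\leftrightarrow$ ultrashort interplay, and one must be careful to absorb these squares into the witness element $b$ or $h$ without destroying the nonvanishing condition.
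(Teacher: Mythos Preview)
Your treatment of the first two bullets is essentially correct and matches the paper: both follow by direct case analysis on root types using T1--T8 from Lemma~\ref{LevelTransvections}. One minor inaccuracy in your bullet~2: the surviving factor in the conjugate is \(\tau_{i,j}(a)\) itself, not a unit multiple of it; the cleanest way to see the claim is to choose a root \(\alpha\) inside the level group \(H\) so that \(U_\alpha\) commutes with \(f\), which is always possible for \(l\geq 3\).

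The genuine gap is in the third bullet. There \(f\) is an arbitrary element of \(\eunit(S^{-1}P,S^{-1}\widehat L)\), hence a product \(f=f_n\cdots f_1\) of possibly many elementary transvections, whereas your bullet~2 handles only a single elementary \(f\). Your ``composition'' argument does not bridge this: you invoke bullet~2 with the given \(H\), but that \(H\) lies in \(\Psi_S(\Omega_S(P,\widehat L))\), not in \(\Psi_S(\Omega^L_S(P))\), so bullet~2 does not apply to it; and even granting that, one application treats only the outermost factor \(f_n\). The paper closes this by induction on \(n\). For the step, one first uses the continuity of conjugation established in Lemma~\ref{GeneratorsOfElementaryLevelGroup} to choose \(H''\in\Psi_S(\Omega_S(P,\widehat L))\) small enough that its \(f_n\)-conjugate lands inside \(H\); then applies the inductive hypothesis to \(f_{n-1}\cdots f_1\) and \(H''\) to obtain some \(H'''\in\Psi_S(\Omega^L_S(P))\) inside \({}^{H''}\langle{}^{f_{n-1}\cdots f_1}g\rangle\); then applies bullet~2 to \(H'''\) and the single transvection \(f_n\) to produce a nontrivial transvection \(g'\); and finally applies bullet~1 to \(g'\) and \(H\). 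You have all the pieces---what is missing is the induction and the explicit appeal to continuity needed to pass the outer \(H\) across the conjugation by \(f_n\).
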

\begin{proof}
The first two claims easily follow from lemma \ref{LevelTransvections}. In the third claim let \(f = f_n \ldots f_1\), where all \(f_i\) are elementary transvections. We will prove the claim by induction on \(n\), the case \(n = 0\) is exactly the first claim. If the claim is true for \(n - 1\), then
\begin{align*}
{^H \langle {^fg} \rangle} &\geq {^H ({^{f_n} \langle {^{H''} ({^{f_{n - 1} \ldots f_1}g})}\rangle })} \geq &&\text{ for } H'' \in \Psi_S(\Omega_S(P, \widehat L)) \text{, if } {^f H''} \leq H\\
&\geq {^H ({^{f_n} H'''})} \geq &&\text{ for some } H''' \in \Psi_S(\Omega^L_S(P))\\
&\geq {^H g'} \geq &&\text{ for a nontrivial transvection } g'\\
&\geq H^{\mathrm{IV}} &&\text{ for some } H^{\mathrm{IV}} \in \Psi_S(\Omega^L_S(P)). \qedhere
\end{align*}
\end{proof}

Let us prove the main theorem. Let \(G \leq C^*\) be a subgroup normalized by \(\eunit(P)\) and \(L\) be its level. We will prove \(G \leq \gunit(P, \lfloor L \rfloor)\) by contradiction. Suppose that it is false, then we want to find a nontrivial transvection in \(G\), i.\,e. an elementary transvection not in \(\eunit(P, L)\). In proposition \ref{LocalExtraction} (which will be proved in the next section) we will prove the assertion if \(K\) is local and \(C\) is a finite \(K\)-algebra, and the resulting nontrivial transvection can be obtained from some \(g \in G \setminus \gunit(P, \lfloor L \rfloor)\) by the rules EX\(1\) -- EX\(4\).

\begin{theorem}\label{Classification}
Suppose that \(l \geq 4\), \(C\) is a quasi-finite \(K\)-algebra, and that \(e_0 C e_0\) is generated by less than \(4l^2\) elements as a \(K\)-module.
If \(G \leq \gl(P)\) is normalized by the group \(\eunit(P)\) and has a level \(L\), then \(\eunit(P, L) \leq G \leq \gunit(P, \lfloor L \rfloor)\). Conversely, all subgroups satisfying these inequalities are normalized by \(\eunit(P)\).
\end{theorem}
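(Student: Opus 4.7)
Proof plan.

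The inclusion $\eunit(P,L)\leq G$ is immediate from Lemma~\ref{GroupLevel}: by the defining property of the level, $G$ contains every $\tau_{i,j}(a)$ with $a\in I$ and every $\tau_i(h)$ with $h\in\Gamma$, hence $\eunit(L)\leq G$, and the normalization hypothesis then gives $\eunit(P,L)={}^{\eunit(P)}\eunit(L)\leq G$. For the converse direction, by construction of the enveloping level the $(1-e_0)$-parts of $L_0$ are absorbed into $\widehat L$ (through $\widehat I(1-e_0)\supseteq(1-e_0)C(1-e_0)$ and $\widehat\Gamma\cdot(1-e_0)\supseteq\Lambda$), so $\eunit(P)=\eunit(P,L_0)\leq\eunit(P,\widehat L)$; hence if $\eunit(P,L)\leq G\leq\gunit(P,\lfloor L\rfloor)$, Theorem~\ref{CommutantFormula} yields
\[
[G,\eunit(P)]\subseteq[\gunit(P,\lfloor L\rfloor),\eunit(P,\widehat L)]\subseteq\eunit(P,L)\subseteq G,
\]
i.e.\ $G$ is normalized by $\eunit(P)$.

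The main content is the inequality $G\leq\gunit(P,\lfloor L\rfloor)$, which I will prove by contradiction: from a hypothetical $g\in G\setminus\gunit(P,\lfloor L\rfloor)$ I will produce an elementary transvection in $G$ not lying in $\eunit(P,L)$, contradicting Lemma~\ref{GroupLevel}. The reduction proceeds in two standard steps. First, the failure of $g\in\gunit'(P,\lfloor L\rfloor)$ (characterized by the equations of Lemma~\ref{GeneralLevelGroup}) is witnessed by finitely many elements, so by quasi-finiteness of $C$ over $K$ I may pass to a finitely generated (hence Noetherian) subring $R_0\leq\herm(K)$ together with finite $R_0$-subalgebras of $K$ and $C$ containing $g^{\pm1}$ and all relevant data. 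Second, choosing a maximal ideal $\mathfrak m\leqt R_0$ outside the annihilator of the obstruction and setting $S=R_0\setminus\mathfrak m$, the localization $\Psi_S(g)$ still lies outside $\gunit(S^{-1}P,\lfloor S^{-1}L\rfloor)$, which is exactly the setting of Proposition~\ref{LocalExtraction}.

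That proposition produces a sequence $g_0=\Psi_S(g),g_1,\ldots,g_N$ obtained via the rules EX1--EX4 ending at a nontrivial elementary transvection; the single additional commutator step described just before Lemma~\ref{ContinuityExtraction} yields $g_{N+1}$ satisfying $g_{N+1}^{h'_N\cdots h'_0}\in\langle[\Psi_S(g),H],H'\rangle$ for suitable $H\in\Psi_S(\Omega_S(P,\widehat L))$ and $H'\in\Psi_S(\Omega_S(P,L))$. Lemma~\ref{ContinuityExtraction} (third bullet) then supplies $H''\in\Psi_S(\Omega^L_S(P))$ with $H''\leq{}^H\langle g_{N+1}\rangle$, and $H''$ contains a nontrivial transvection by the very definition of $\Omega^L_S(P)$. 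Writing $H=\Psi_S(\eunit(P,\widehat L\cdot s))$ and $H'=\Psi_S(\eunit(P,L\cdot s))$ for $s\in S$ chosen so that $\Psi_S$ is injective on the ambient finitely generated submodule, I can lift the nontrivial transvection to a global element of $\gl(P)$; combining the $\eunit(P)$-normalization of $G$ with Theorem~\ref{CommutantFormula} to reabsorb the $\eunit(P,\widehat L)$-conjugations appearing inside the EX-rules into $\eunit(P,L)\leq G$, I conclude that this global transvection lies in $G$, the desired contradiction.

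The principal obstacle is the final descent: one must track carefully how each of the rules EX1--EX4 interacts with the two hypotheses ``$g\in G$'' and ``$G$ is normalized by $\eunit(P)$'' so that the output transvection is genuinely an element of $G$ rather than merely of some larger group. Theorem~\ref{CommutantFormula} and the containment $\eunit(P,L)\leq G$ are precisely the tools that make this descent feasible, by absorbing the non-$\eunit(P)$ conjugations used during the extraction into $\eunit(P,L)\leq G$.
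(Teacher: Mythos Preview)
Your outline matches the paper's proof in all essential respects: the reduction to a Noetherian situation, the localization at a maximal ideal where the obstruction survives, the appeal to Proposition~\ref{LocalExtraction} followed by the extra commutator step and Lemma~\ref{ContinuityExtraction}, and the lift back to \(G\) via the injectivity of \(\Psi_{\mathfrak m}\) on \(sC\) for a suitably chosen \(s\). The converse direction via Theorem~\ref{CommutantFormula} is also exactly what the paper does.

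There is one point where your justification is off. In the descent you invoke Theorem~\ref{CommutantFormula} to ``reabsorb the \(\eunit(P,\widehat L)\)-conjugations into \(\eunit(P,L)\leq G\)''. But Theorem~\ref{CommutantFormula} gives \([\gunit'(P,L),\eunit(P,\widehat L)]\subseteq\eunit(P,L)\), and you are working precisely with a \(g\notin\gunit'(P,\lfloor L\rfloor)\), so the theorem does not apply to the commutators \([g,h]\) you need. The correct (and simpler) reason that the lifted word lies in \(G\) is that \(\eunit(P,\widehat L)\) normalizes \(G\) outright: every generator \(\tau_{i,j}(a)\) of \(\eunit(\widehat L)\) with \(a\in e_i\widehat I e_j\) factors as \(\tau_{i,j}(a_1)\tau_{i,j}(a_2)\) with \(a_1\in e_iIe_j\) and \(a_2\in e_iCe_j\) (and similarly for \(\tau_i(h)\) using \(\widehat\Gamma\cdot(1-e_0)=\Gamma\cdot(1-e_0)\dotplus\Lambda\)), so \(\eunit(\widehat L)\leq\eunit(P)\cdot\eunit(L)\) and hence \(\eunit(P,\widehat L)\leq\eunit(P)\cdot\eunit(P,L)\leq\eunit(P)\cdot G\), which normalizes \(G\). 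With this in hand, each building block \([g,h]\) (for \(h\) lifted from \(H\)) and each element lifted from \(H'\) lies in \(G\), and conjugation by lifts from \(H\) preserves \(G\); moreover every such block has \((-1)\)-part in \(sC\), so the injectivity of \(\Psi_{\mathfrak m}|_{sC}\) forces the lifted word to be an honest transvection with parameter outside \(I\). The paper glosses over this step in a single sentence, but this is the mechanism behind it.
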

\begin{proof}
Suppose that \(g \in G \setminus \gunit'(P, \lceil L \rceil)\) (recall that \(\gunit'(P, \lceil L \rceil) = \gunit(P, \lfloor L \rfloor)\) by theorem \ref{CommutantFormula}). Without loss of generality, \(\herm(K)\) is Noetherian (for example, it is finitely generated over \(\mathbb Z\)), both \(K\) and \(C\) are finite \(K\)-algebras. It is easy to see that \(\Psi_{\mathfrak m}(g) \notin \gunit'(P_{\mathfrak m}, \lceil L_{\mathfrak m} \rceil)\) for some maximal ideal \(\mathfrak m \leqt \herm(K)\) (here \(\lceil L_{\mathfrak m} \rceil = \lceil L \rceil_{\mathfrak m}\) because of Noetherian condition). Let us apply proposition \ref{LocalExtraction}, then there is a sequence of elements \(g_0 = \Psi_{\mathfrak m}(g), \ldots, g_{N + 1}\) such that \(g_{N + 1}\) is a nontrivial transvection (it can also be a product of two elementary transvections with an angle \(\frac \pi 4\) between their roots) and \(g_{N + 1}^h \in \langle [\Psi_S(g), H], H' \rangle\) for some \(h \in \eunit(S^{-1} P, S^{-1} \widehat L)\). The groups \(H \in \Psi_{\mathfrak m}(\Omega_{\mathfrak m}(P, \widehat L))\) and \(H' \in \Psi_{\mathfrak m}(\Omega_{\mathfrak m}(P, L))\) can be chosen arbitrary, if only \(H' \leq H\). We take \(H = \Psi_{\mathfrak m}(\eunit(P, \widehat L \cdot s))\) and \(H' = \Psi_{\mathfrak m}(\eunit(P, L \cdot s))\) for such \(s\) that \(\Psi_{\mathfrak m}|_{s C}\) in injective (it suffices to take such \(s\) that \(\mathrm{Ann}(s) = \Ker(\Psi_{\mathfrak m})\)). By lemma \ref{ContinuityExtraction} there is \(H'' \in \Psi_{\mathfrak m}(\Omega^L_{\mathfrak m}(P))\) such that \(H'' \leq {^H \langle g_{N + 1}^f \rangle}\). But then \(G\) itself has a nontrivial transvection (it is some expression of \(g\) modulo \(\Ker(\Psi_{\mathfrak m})\), and we can eliminate the modulus because of our choice of \(s_0\)), this is a contradiction.

The second claim follows from theorem \ref{CommutantFormula}.
\end{proof}

\section{Local extraction of transvections}

Let \(K\) be local with the maximal ideal \(\mathfrak m\), \(C\) be a finite \(K\)-algebra, \(L\) be a level, and \(g \in C^*\). We want to prove that either \(g \in \gunit(P, \lfloor L \rfloor)\), or it is possible to obtain a nontrivial transvection from \(g\) using the rules EX\(1\) -- EX\(4\).

The idea of the extraction of transvections is the following: first of all, we will do it modulo the Jacobson radical \(\mathrm J(C)\) of the algebra \(C\) note that (\(C / \mathrm J(C)\) is a semi-simple \((K / \mathfrak m)\)-algebra, because \(C\) is semi-local). After that if we have found a nontrivial transvection then it can be used to contsruct a nontrivial transvection in \(C^*\). Otherwise, we still need to extract a transvection in \(C^*\), but we can use that \([g] \in \gunit(P / \mathrm J(C), \lfloor L / \mathrm J(C) \rfloor)\) (here the right hand side means the corresponding subgroup of \((C / \mathrm J(C))^*\)). For simplicity let \(E = e_l + e_{l - 1}\).

\begin{lemma}\label{ParabolicExtraction}
Suppose that \(l \geq 4\) and \(\inv E \alpha(g) = \inv E\). Then either \(g \in \unit(P, \lceil L \rceil)\), or it is possible to extract a nontrivial transvection.
\end{lemma}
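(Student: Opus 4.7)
The hypothesis $\inv E \alpha(g) = \inv E$, with $E = e_l + e_{l-1}$ and $\alpha(g) = (g, \inv{g^{-1}})$, unpacks in $C$ to the two equalities $(e_{-l} + e_{-l+1}) g = e_{-l} + e_{-l+1}$ and, by taking the involution of the second coordinate, $g(e_l + e_{l-1}) = e_l + e_{l-1}$. Equivalently, both rows $-l, -l+1$ of $g$ (and hence of $g^{-1}$) are identity rows, and both columns $l-1, l$ of $g$ (and hence of $g^{-1}$) are identity columns. Writing $e_M = 1 - E - \inv E$, the element $g$ decomposes with respect to the block structure $P = \inv E P \oplus e_M P \oplus E P$ as
\[
g = \inv E + E + X + Y + Z + W,
\]
where $X \in e_M C \inv E$, $Y \in e_M C e_M$ (with $Y + E + \inv E$ invertible as a consequence of $g \in C^*$), $Z \in E C \inv E$, and $W \in E C e_M$.

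The plan is to examine each off-diagonal block $X$, $Z$, $W$ and the ``non-identity'' part of $Y$ by forming commutators $[g, \tau_{i,j}(x)]$ and $[g, \tau_i(h)]$ with suitable elementary transvections from $\eunit(P, \widehat L)$. By lemma \ref{LevelTransvections}, each such commutator collapses to a single elementary transvection (or a short product thereof) whose coefficient equals the desired block entry of $g$: choosing one index from the peripheral set $\{-l, -l+1, l-1, l\}$ (where $g$ agrees with the identity) makes most terms in identities T4--T8 vanish, so that for instance $[g, \tau_{-l+1, j}(x)]$ is, modulo $\eunit(P, L)$, a single transvection $\tau_{i, j}(-X_{i} x)$ with $X_i = e_i g e_{-l+1}$. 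Entries involving the odd index $0$ are extracted using the Heisenberg-type transvections $\tau_{\pm l}(h)$ together with identities T5--T8 and the axioms NQ1--NQ10.

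This produces a clear dichotomy. If every extracted transvection already lies in $\eunit(P, L)$, then the block coefficients of $g$ satisfy the defining inequalities of $\lceil I \rceil$ (the $\widehat I$-preservation property $a \widehat I (1-e_0) + (1-e_0) \widehat I a \leq I$) and $\lceil \Gamma \rceil$, so $g \in \unit(P, \lceil L \rceil)$ by the characterization of the principal level group. Otherwise, at least one extracted transvection is not in $\eunit(P, L)$ and is hence a nontrivial transvection produced via rule EX2. The main obstacle is the Levi block $Y$, which need not be close to $e_M$: before block-by-block extraction one may first apply rule EX3, multiplying $g$ on a suitable side by an element of $\gunit(P, \lfloor L \rfloor)$ to cancel the diagonal of $Y$ modulo $I$, reducing to the fully unipotent case. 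The hypothesis $l \geq 4$ is used to ensure the middle block $e_M$ still contains indices $\pm 1, \pm 2, \ldots, \pm(l-2)$, so that all the commutators needed in the extraction --- in particular those using three distinct indices in identities T4 and T5 --- are defined.
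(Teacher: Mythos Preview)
Your treatment of the Levi block \(Y\) is circular. Rule EX3 requires an element of \(\gunit(P,\lfloor L\rfloor)\) known in advance; the only natural candidate that cancels \(Y\) is \(x^{-1}\) with \(x=\inv E+Y+E\), and showing that \(x\in\unit(P,\lceil L\rceil)\subseteq\gunit(P,\lfloor L\rfloor)\) is precisely the content of the lemma. You cannot invoke EX3 before this is established. The paper reverses your order of operations: it first treats the unipotent case \(Y=e_M\) directly (then \(g\) is a product \(\prod_\alpha \tau_\alpha(u_\alpha)\) over roots with \((\alpha,E)<1\), and each factor is extractable by iterated commutators), and then in the general case proves \(x\in\unit(P,\lceil L\rceil)\) by observing that every commutator \(h=[g,\tau_{i,-l}(a)]\) or \([g,\tau_{i,1-l}(a)]\) with \(2-l\leq i\leq l-2\) already satisfies \((1-E)\alpha(h)(1-\inv E)=e_M\), hence falls into the unipotent case and lies in \(\unit(P,\lceil L\rceil)\). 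Reading off the entries of these \(h\) column by column yields \((\alpha(x)-1)e_i\in\lceil I\rceil\) and \(\eps\cdot\alpha(x)e_i\dotminus\eps\cdot e_i\in\lceil\Gamma\rceil\) for all nonzero middle \(i\); the \(0\)-th column is handled by first conjugating \(g\) by some \(\tau_i(h)\in\eunit(P,\widehat L)\), and the peripheral columns via \([g,\tau_{l,l-1}(a)]\), \([g,\tau_{l-1,l}(a)]\). Only after \(x\in\unit(P,\lceil L\rceil)\) is established does EX3 become available, reducing \(g x^{-1}\) to the unipotent case.

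There is also a smaller technical slip. Your sample commutator \([g,\tau_{-l+1,j}(x)]\) does not collapse as you describe: the hypothesis fixes the \emph{rows} in \(\inv E\) and the \emph{columns} in \(E\), so neither \(g e_{-l+1}\) nor \(e_j g^{-1}\) is controlled, and conjugation does not simplify. The correct choice is to put the peripheral index where the hypothesis bites, e.g.\ \(\tau_{i,-l}(a)\) (so that \(e_{-l}g^{-1}=e_{-l}\)) or \(\tau_{l,-i}\) (so that \(g e_l=e_l\)); this is exactly what the paper does, and it is why those commutators land in the unipotent case.
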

\begin{proof}
Suppose that it is impossible to extract a nontrivial transvection. Firstly suppose that \((1 - E)(\alpha(g) - 1)(1 - \inv E) = 1 - \inv E - E\). In this case \(g = \prod_{\substack{\alpha \in \Phi,\\ (\alpha, E) < 1,\\ |\alpha| < 2}} \tau_\alpha(u_\alpha)\) for some uniquely determined \(u_\alpha\). Then all factors are products of certain transvections that can be obtained from \(g\) by multiple applying of commutators and multiplications by transvections, hence they all are in \(\eunit(P, L)\).

In general case \(\inv E \alpha(g) = \inv E\) and \(\alpha(g) E = E\). If we prove that \(x = \inv E + (1 - E) g (1 - \inv E) + E \in \unit(P, \lceil L \rceil)\), then the considered case can be applied to \(gx^{-1}\). Let \(h = [g, \tau_{i, -l}(a)]\) (or \(h = [g, \tau_{i, 1 - l}(a)]\)) for \(a \in \widehat L\) and \(2 - l \leq i \leq l - 2\), \(i \neq 0\), then \((1 - E)\alpha(h)(1 - \inv E) = 1 - \inv E - E\). Therefore, \(h \in \unit(P, \lceil L \rceil)\) for all such \(i\) and \(a\), i.\,e. \((\alpha(x) - 1) e_i \in \lceil I \rceil\). From consideration of \([g, \tau_{l, l - 1}(a)]\) and \([g, \tau_{l - 1, l}(a)]\) it follows that \(e_i \alpha(g) \inv E \in I\). Returning to \(h\), it is easy to see that \(\eps \cdot h \dotminus \eps \in \lceil \Gamma \rceil\) implies \(\eps \cdot \alpha(x) e_i \dotminus \eps \cdot e_i \in \lceil \Gamma \rceil\), i.\,e. all columns \(x\) with nonzero index satisfy the required equalities. For the remaining column it can be proved using similar reasoning, if we consider \({^{\tau_i(h)} g}\) instead of \(g\) for arbitrary \(2 - l \leq i \leq l - 2\), \(i \neq 0\) and \(h \in \widehat \Gamma\) (note that \((1 - e_0) \alpha(x) e_0 \in \lceil I \rceil\) because \(e_0 \alpha(x^{-1}) (1 - e_0) \in \lceil I \rceil\)).
\end{proof}

\begin{lemma}\label{CornerExtraction}
Suppose that \(l \geq 4\) and \(\inv E \alpha(g) (\inv E + e_{-1} + e_{-2}) = \inv E\). Then either \(g \in \unit(P, \lceil L \rceil)\), or it is possible to extract a nontrivial transvection.
\end{lemma}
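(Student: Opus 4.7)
The hypothesis is a partial version of the parabolic condition of Lemma~\ref{ParabolicExtraction}: we have the exact equality $\inv E\alpha(g)\inv E = \inv E$, but the $\inv E$-rows of $\alpha(g)$ are only known to vanish on the two columns indexed by $-1$ and $-2$. Writing $r_1 = \inv E\alpha(g)(1 - \inv E)$, the hypothesis becomes $r_1 e_{-1} = r_1 e_{-2} = 0$. My plan is to exploit these two zero columns to reduce to Lemma~\ref{ParabolicExtraction} by conjugation, and then to propagate the resulting information over all remaining columns using commutator cascades, exactly as in the proof of the previous lemma.

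\textbf{Key computation.} For the two transvections $\tau = \tau_{-1,2}(a)$ with $a \in e_{-1}\widehat I e_2$ and $\tau = \tau_{-2,1}(a)$ with $a \in e_{-2}\widehat I e_1$, a direct check using $r_1 e_{-1} = r_1 e_{-2} = 0$ and $\inv E e_{\pm 1} = \inv E e_{\pm 2} = 0$ shows that $(\inv E + r_1)(\alpha(\tau) - 1) = 0$, hence
\[
\inv E\,\alpha(g\tau g^{-1}) = (\inv E + r_1)\alpha(\tau)\alpha(g^{-1}) = (\inv E + r_1)\alpha(g^{-1}) = \inv E.
\]
Thus $g\tau g^{-1}$ satisfies the hypothesis of Lemma~\ref{ParabolicExtraction}. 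Applying that lemma to $g\tau g^{-1}$, we either extract a nontrivial transvection (done), or $g\tau g^{-1}\in\unit(P,\lceil L\rceil)$, whence $[g,\tau]\in\unit(P,\lceil L\rceil)$.

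\textbf{Propagation.} Assuming no nontrivial transvection appears, I would mimic the second half of the proof of Lemma~\ref{ParabolicExtraction}: from $[g,\tau_{-1,2}(a)],\ [g,\tau_{-2,1}(a)]\in\unit(P,\lceil L\rceil)$ for every admissible $a$, translate the resulting congruences $\alpha([g,\tau]) - 1 \in \lceil I\rceil$ and $\eps\cdot\alpha([g,\tau])\dotminus\eps \in \lceil\Gamma\rceil$ into constraints on the remaining entries $e_{-l}\alpha(g)e_j$, $e_{-l+1}\alpha(g)e_j$ for $j\notin\{-l,-l+1,-1,-2\}$. The $\tau_{-1,2}$ and $\tau_{-2,1}$ pair only gives information about two new columns of the $\inv E$-rows at a time; to reach all other columns I would first iterate the argument after conjugating $g$ by suitable elements of $\eunit(P,\widehat L)$ (such as commutators $[\tau_{-1,j}(b),\tau_{j,2}(c)]=\tau_{-1,2}(bc)$ for varying $j$, and their analogues), whose effect is to replace the distinguished columns $\{-1,-2\}$ by arbitrary pairs from $\{-1,\ldots,-(l-1)\}\setminus\{-l+1\}$. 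For the $e_0$-column and the ultrashort pieces of the level I would use $\tau_i(h)$-commutators as in the last paragraph of the proof of Lemma~\ref{ParabolicExtraction}. Putting all constraints together shows that the block $\inv E\alpha(g)$ agrees with $\inv E$ modulo $\lceil I\rceil$ (with the quadratic piece in $\lceil\Gamma\rceil$), which, combined with the analogous analysis for $\alpha(g^{-1})$ (or equivalently for $\alpha(g)E$), places $g$ into $\unit(P,\lceil L\rceil)$.

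\textbf{Main obstacle.} The main technical difficulty is the bookkeeping required to cover every column of the $\inv E$-rows using only commutators and conjugations that preserve membership in $\unit(P,\lceil L\rceil)$, together with the parallel argument for the quadratic data in $\lceil\Gamma\rceil$ (as opposed to the purely linear piece in $\lceil I\rceil$). The assumption $l\geq 4$ is used here to guarantee enough room: we need auxiliary indices $j$ outside $\{0,\pm 1,\pm 2,\pm(l-1),\pm l\}$ to form the conjugating elements mentioned above, and we also need to invoke Lemma~\ref{ParabolicExtraction}, which itself requires $l\geq 4$. Once the reduction to the parabolic case is carried out, the final step is the same ``either no transvection, therefore $g\in\unit(P,\lceil L\rceil)$'' argument that closes the proof of the previous lemma.
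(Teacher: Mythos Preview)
Your key computation is correct: for $\tau=\tau_{-1,2}(a)$ the element $g\tau g^{-1}$ indeed satisfies the hypothesis of Lemma~\ref{ParabolicExtraction}, so either one extracts, or $[g,\tau]\in\unit(P,\lceil L\rceil)$. The gap is in the propagation step. The congruence you obtain from $[g,\tau_{-1,2}(a)]\in\unit(P,\lceil L\rceil)$ is ${}^{\alpha(g)}(a-\inv a)\equiv a-\inv a\pmod{\lceil I\rceil}$ with $a\in e_{-1}\widehat I e_2$ and $\inv a\in e_{-2}\widehat I e_1$. This constrains the column $e_{-1}$ (and $e_{-2}$) of $\alpha(g)$ against the row $e_2$ (and $e_1$) of $\alpha(g)$; projecting onto the $\inv E$-rows gives the tautology $0\equiv 0$, because $\inv E\alpha(g)e_{-1}=\inv E\alpha(g)e_{-2}=0$ is already the hypothesis and $\inv E a=\inv E\,\inv a=0$. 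So these commutators do \emph{not} produce new information about the $\inv E$-rows of $\alpha(g)$, contrary to what you claim. (Also note $\tau_{-2,1}(-\inv a)=\tau_{-1,2}(a)$ by T2, so your two families coincide.) Your suggested conjugations by elements like $[\tau_{-1,j}(b),\tau_{j,2}(c)]=\tau_{-1,2}(bc)$ stay inside the same root subgroup and do not move the distinguished pair $\{-1,-2\}$; and genuinely permuting indices would destroy the hypothesis on $g$.

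The paper uses a different commutator, supported \emph{inside} the $E$-block rather than on $\{-1,-2\}$: set $y=[\tau_{l-1,-l}(a),g^{-1}]$. One checks $\alpha(y)(e_{-1}+e_{-2})=e_{-1}+e_{-2}$, so the Weyl-symmetric form of Lemma~\ref{ParabolicExtraction} applies to $y$. The point of this choice is that the explicit formula
\[
(1-E)\,\alpha(y)\,\inv E \;=\; \inv E + (1-E)\,\alpha(g)^{-1}(\inv a - a)
\]
contains $\alpha(g)^{-1}E$ visibly, so $y\in\unit(P,\lceil L\rceil)$ for all $a$ forces $\inv E\,\alpha(g)(1-\inv E)\in I$. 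One then multiplies $g$ on the right by elements of $\eunit(P,L)$ to achieve $\inv E\,\alpha(g)(1-E)=\inv E$; a separate short computation with the $\Gamma$-condition for $y$ shows $(0,\inv E\,g\,E,0)\in\Gamma$, allowing a further right multiplication to obtain $\inv E\,\alpha(g)=\inv E$, and Lemma~\ref{ParabolicExtraction} now applies to $g$ itself. The missing idea in your argument is precisely this: to get information on the $\inv E$-rows you must take a commutator with a transvection whose support meets the $\inv E$-columns (here $\tau_{l-1,-l}$), not one supported on the columns $-1,-2$ that are already known to vanish.
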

\begin{proof}
Assume that we cannot extract a nontrivial transvection. Let \(y = [\tau_{l - 1, -l}(a), x^{-1}]\). Clearly, \(\alpha(y) (e_{-1} + e_{-2}) = e_{-1} + e_{-2}\), hence \(y \in \unit(P, \lceil L \rceil)\) by lemma \ref{ParabolicExtraction}. Since \((1 - E) \alpha(y) \inv E = \inv E + (1 - E) \alpha(x)^{-1} (\inv a - a)\), then \(\inv E \alpha(x) (1 - \inv E) \in I\). Therefore, if we multiply \(x\) from the right by some element from \(\eunit(P, L)\), we can assume that \(\inv E \alpha(x) (1 - E) = \inv E\). Next, \(\eps \cdot \alpha(y) \inv E \in \Gamma\) and \(\inv E x E + \inv E x^{-1} E = 0\), so it easily follows that \((0, \inv E x E, 0) \in \Gamma\). Now after another multiplication of \(x\) from the right by an element from \(\eunit(P, L)\), we can assyme that \(\inv E x = \inv E\), hence lemma \ref{ParabolicExtraction} can be applied.
\end{proof}

\begin{lemma}\label{SemiparabolicExtraction}
Suppose that \(l \geq 4\), \((1 - E) g E = 0\) or \(\inv E g (1 - \inv E) = 0\), and that \(\inv E g \inv E\) and \(EgE\) are invertible in \(\inv E C \inv E\) and \(ECE\). Then either \(g \in \gunit(P, \lfloor L \rfloor)\), or it is possible to extract a nontrivial transvection.
\end{lemma}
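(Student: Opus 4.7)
The plan is to reduce Lemma \ref{SemiparabolicExtraction} to Lemma \ref{CornerExtraction}, in the same spirit that the latter reduces to Lemma \ref{ParabolicExtraction}. The two hypotheses $(1-E)gE = 0$ and $\inv E g(1-\inv E) = 0$ are handled by parallel arguments; I describe the first, which is the harder of the two, and indicate at the end where the second becomes easier.

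Suppose $(1-E)gE = 0$. Then $g$ has upper-block-triangular form in the decomposition $1 = E + (1-E)$, with $A := EgE$ invertible in $ECE$ by hypothesis. Since $g \in C^*$ and $C$ is quasi-finite, every corner of $C$ is quasi-finite and one-sided inverses there are automatically two-sided (as noted at the beginning of Section~11); combined with the block-triangular structure this forces $D := (1-E)g(1-E)$ to be invertible in $(1-E)C(1-E)$, and in particular $\inv E g \inv E$ is a unit in $\inv E C \inv E$.

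The strategy is to transform $g$ via rules EX1 and EX3 into a shape matching the hypothesis $\inv E \widetilde g (\inv E + e_{-1} + e_{-2}) = \inv E$ of Lemma \ref{CornerExtraction}. Since $l \geq 4$, the indices $-l, -(l-1), -1, -2$ are pairwise distinct, so the transvections $\tau_{-l, -1}, \tau_{-l, -2}, \tau_{-(l-1), -1}, \tau_{-(l-1), -2}$ are all available. Using invertibility of $\inv E g \inv E$ one chooses parameters in these four transvections so that right-multiplication clears the entries of $g$ in the $\inv E$-rows and $\{e_{-1}, e_{-2}\}$-columns. Next, normalize the $\inv E$-$\inv E$ block to $\inv E$ by right multiplication (rule EX3) by the block-diagonal unitary element $y \in \unit(P)$ with $\inv E y \inv E = (\inv E g \inv E)^{-1}$, $E y E = \inv{\inv E g \inv E}$, and identity on the middle $1 - E - \inv E$ (the pseudo-involution pairs the two blocks into a unitary by Lemma \ref{HermitianMorita}). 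If at any step a required transvection has entry outside $I$ or $\Gamma$, we have extracted a nontrivial transvection directly; otherwise, the transformed $\widetilde g$ satisfies the hypothesis of Lemma \ref{CornerExtraction} and that lemma yields either $\widetilde g \in \unit(P, \lceil L \rceil) \subseteq \gunit(P, \lfloor L \rfloor)$, hence $g \in \gunit(P, \lfloor L \rfloor)$, or a nontrivial transvection. The second case $\inv E g(1-\inv E) = 0$ already forces $\inv E g e_{-1} = \inv E g e_{-2} = 0$, so only the normalization of $\inv E g \inv E$ needs to be carried out; the argument is structurally identical but shorter.

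The main obstacle I expect is verifying that the normalizing element $y$ belongs to $\gunit(P, \lfloor L \rfloor)$. While $y \in \unit(P)$ by the construction above, membership in $\gunit(P, \lfloor L \rfloor)$ requires checking the commutator conditions with $\eunit(P, \widehat L)$ described in Lemma \ref{GeneralLevelGroup}. This reduces to showing that conjugation by $y$ preserves the ideal $I$ and group $\Gamma$ up to $\lfloor I \rfloor$ and $\lfloor \Gamma \rfloor$ respectively, which in turn follows from the explicit block form of $y$ and the compatibility of $(\inv E g \inv E)^{\pm 1}$ with $I \cap \inv E C \inv E$ coming from the invertibility assumption. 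Secondary care is needed to order the four column-clearing transvections so they do not disturb one another, but since the pairs of indices are disjoint this is essentially immediate.
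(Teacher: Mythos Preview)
Your approach has a genuine gap at the column-clearing step. You propose to right-multiply $g$ by transvections $\tau_{-l,-1}(\ast)$, $\tau_{-l,-2}(\ast)$, $\tau_{1-l,-1}(\ast)$, $\tau_{1-l,-2}(\ast)$ with parameters determined by the entries of $g$, and claim that if a required parameter falls outside $I$ then ``we have extracted a nontrivial transvection directly''. This is not so. The extraction rules only permit one-sided multiplication by elements of $\eunit(P, L)$ (rule EX1) or of $\gunit(P, \lfloor L \rfloor)$ (rule EX3), and since the level of $\gunit(P, \lfloor L \rfloor)$ is $L$ (Lemma \ref{GeneralLevelGroup}), a transvection $\tau_{i,j}(a)$ with $a \notin I$ lies in neither group. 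Observing that you \emph{would like} to use such a transvection does not produce it from $g$ via EX1--EX4; an ``extracted'' transvection must actually arise from $g$ through those rules. The same objection applies to your normalizing element $y$: for $y \in \gunit(P, \lfloor L \rfloor)$ one would need, among other things, $(\inv E g \inv E)^{\pm 1} - \inv E \in I$, and nothing in the hypotheses gives this. Invertibility of $\inv E g \inv E$ in $\inv E C \inv E$ carries no information whatsoever about its relation to $I$, so the justification you sketch in the obstacle paragraph does not go through.

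The paper proceeds in the opposite direction. Assuming no nontrivial transvection is extractable, it forms commutators such as $x = [g^{-1}, \tau_{l, 1-l}(a, b)]$ (a legitimate EX2 move) and applies Lemmas \ref{ParabolicExtraction} and \ref{CornerExtraction} to these commutators, which have tractable shapes because of the block-triangular structure of $g$. From $x \in \unit(P, \lceil L \rceil)$ one then \emph{reads off} that the relevant off-diagonal entries of $g$ already lie in $I$; only then are they cleared by multiplication with an element of $\eunit(P, L)$ (rule EX1), yielding $g'$ with both $(1-E)g'E = 0$ and $\inv E g'(1-\inv E) = 0$. The block-diagonal part $d$ of $g'$ is finally shown to lie in $\gunit'(P, \lceil L \rceil) = \gunit(P, \lfloor L \rfloor)$ by verifying the explicit equations of Lemma \ref{GeneralLevelGroup}, again via commutators handled by the earlier lemmas. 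The missing idea in your proposal is precisely this inversion of logic: information about which entries of $g$ lie in $I$ must first be \emph{extracted} through commutators before you are entitled to kill those entries with level-$L$ transvections.
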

\begin{proof}
Assume that one cannot extract a nontrivial transvection. If \((1 - E) g E = 0\), then using commutators of type \(x = [g^{-1}, \tau_{l, 1 - l}(a, b)]\) it is easy to see that \(g\) can be multiplied from the right by such an element \(f \in \eunit(P, L)\) that the product \(g' = gf\) satisfies the equalities \((1 - E) g' E = 0 = \inv E g' (1 - \inv E)\). If \(\inv E g (1 - \inv E) = 0\), then one can proceed similarly, using commutators of type \(x = [g, \tau_{l, 1 - l}(a, b)]\) and multiplying by \(f\) from the left in the definition of \(g'\). Now if we prove that \(d = \inv E g' \inv E + (1 - E - \inv E) g' (1 - E - \inv E) + Eg'E\) is in \(\gunit(P, \lfloor L \rfloor)\), then for \(g'd^{-1}\) the claim will be obvious.

We will prove that \(d \in \gunit'(P, \lceil L \rceil)\) using explicit equations from lemma \ref{GeneralLevelGroup}. Since \([d^{\pm 1}, \tau_{i, j}(a)] \in \unit(P, \lceil L \rceil)\) for \(a \in \widehat I\), \(0 \neq i \neq \pm j \neq 0\), and \(i, -j \neq l, l - 1\) (by lemma \ref{ParabolicExtraction}) and since \([d^{\pm 1}, \tau_{l, l - 1}(a)], [d^{\pm 1}, \tau_{l - 1, l}(a)] \in \unit(P, \lceil L \rceil)\) for \(a \in \widehat I\) (by lemma \ref{CornerExtraction}), then \({^{\alpha(d)^{\pm 1}} a} \equiv a \mod \lceil I \rceil\) for \(a \in \widehat I \inv E + (1 - \inv E) \lfloor \widehat I \rfloor (1 - E) + E \widehat I\). For \(a \in (1 - E) \widehat I E\) it follows from the fact that all \(b \in E \widehat I (1 - E)\) satisfy congruences \(ba \equiv \alpha(d^{-1}) b \alpha(d) a \equiv b \alpha(d) a \alpha(d^{-1}) \mod \lceil I \rceil\). The remaining equations easily follow from this.
\end{proof}

\begin{lemma}\label{AlmostDiagonalExtraction}
Suppose that \(l \geq 4\), \(e_0 C e_0\) is generated by less than \(4l^2\) elements as a \(K\)-module, \((e_{-1} \alpha(g) e_{-1}\) and \(e_{-2} \alpha(g) e_{-2}\) are invertible in \(e_{-1} \mathcal A e_{-1}\) and \(e_{-2} \mathcal A e_{-2}\). Suppose also that \(e_{-1} \alpha(g) (1 - e_{-1}), (1 - e_{-1}) \alpha(g) e_{-1}, e_{-2} \alpha(g) (1 - e_{-2}), (1 - e_{-2}) \alpha(g) e_{-2} \in \mathrm J(\mathcal A)\). Then either \(g \in \gunit(P, \lfloor L \rfloor)\), or it is possible to extract a nontrivial transvection.
\end{lemma}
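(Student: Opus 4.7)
The plan is to assume that no nontrivial transvection can be extracted from $g$ via the rules EX1--EX4 and deduce $g \in \gunit(P, \lfloor L \rfloor)$, which by theorem \ref{CommutantFormula} is equivalent to $g \in \gunit'(P, \lceil L \rceil)$. The strategy parallels the proof of lemma \ref{SemiparabolicExtraction}, with the preliminary task of converting the ``Jacobson-radical smallness'' of the off-diagonal entries at rows and columns $-1,-2$ into ``$\lceil I \rceil$-smallness''. Once this conversion is achieved, one multiplies $g$ by transvections from $\eunit(P, \lceil L \rceil)$ to bring it into a shape amenable to the same commutator-equation arguments as in lemmas \ref{ParabolicExtraction}, \ref{CornerExtraction} and \ref{SemiparabolicExtraction}.

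First I would consider a rich family of commutators: $[g, \tau_{j,-1}(a)]$ and $[g, \tau_{-1,j}(a)]$ for $j \notin \{0, \pm 1\}$ and $a \in \widehat I$, and also $[g, \tau_{-1}(h)]$ for $h \in \widehat \Gamma \cdot e_{-1}$ (and analogously at index $-2$). Expanding $\alpha$ via the identities in lemma \ref{LevelGroup} and using that $e_{-1}\alpha(g)e_{-1}$ is invertible while $e_{-1}\alpha(g)(1-e_{-1})$ and $(1-e_{-1})\alpha(g)e_{-1}$ lie in $\mathrm J(\mathcal A)$, the non-extraction assumption forces these commutators into $\unit(P, \lceil L \rceil)$. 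Inverting $e_{-1}\alpha(g) e_{-1}$ and isolating the relevant matrix entries, I would obtain $e_j \alpha(g) e_{-1}, e_{-1}\alpha(g) e_j \in \lceil I \rceil$ for $j \neq -1$, and analogously at $-2$.

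Second, using this $\lceil I \rceil$-control together with the invertibility of $e_{-1}\alpha(g)e_{-1}$ and $e_{-2}\alpha(g)e_{-2}$, I would choose elementary transvections in $\eunit(P, \lceil L \rceil)$ whose product with $g$ has rows and columns $-1,-2$ isolated; the resulting $g''$ satisfies $e_{-1}\alpha(g'') = \alpha(g'') e_{-1} = e_{-1}\alpha(g'')e_{-1}$ and the analogous equality at $-2$. Third, for $g''$ in this block-isolated form, the commutator $[g'', \tau_\alpha(a)]$ for each root $\alpha$ supported away from the isolated indices satisfies the parabolic shape of lemma \ref{ParabolicExtraction} or the corner shape of lemma \ref{CornerExtraction}, landing in $\unit(P, \lceil L \rceil)$; the commutators at $\pm 1, \pm 2$ are controlled directly by the block isolation. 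Checking the explicit equations of lemma \ref{GeneralLevelGroup} in this manner yields $g'' \in \gunit'(P, \lceil L \rceil)$, and hence $g \in \gunit(P, \lfloor L \rfloor)$.

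The main obstacle is the first step: converting Jacobson-radical smallness into $\lceil I \rceil$-smallness. It requires the non-extraction assumption to be exploited across enough commutators to pin down every off-diagonal entry at $-1$ and $-2$, and the cleanup parameters obtained by formally inverting $e_{-1}\alpha(g)e_{-1}$ must themselves belong to $\lceil I \rceil$. This relies on the nilpotency of $\mathrm J(C)$ (available since $C$ is a finite algebra over the local ring $K$) and on a careful bookkeeping of how $\lceil I \rceil$ interacts with the identity $\alpha(\tau_{i,j}(a)) = 1 + a - \inv a$ when expanding commutators, together with the additional assumption on the number of generators of $e_0 C e_0$ to rule out degenerate residue components in the style of proposition \ref{LocalSplitting}.
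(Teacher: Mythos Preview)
Your plan has a genuine gap in the first step. You claim that by considering commutators such as $[g, \tau_{j,-1}(a)]$ and invoking the non-extraction hypothesis, you can force these commutators into $\unit(P, \lceil L \rceil)$ and hence read off $e_j\alpha(g)e_{-1} \in \lceil I \rceil$. But the only mechanism available for turning ``no nontrivial transvection can be extracted'' into membership in $\unit(P, \lceil L \rceil)$ is one of the earlier lemmas \ref{ParabolicExtraction}, \ref{CornerExtraction}, \ref{SemiparabolicExtraction}, and each of those requires an \emph{exact} vanishing condition on a block of $\alpha(\cdot)$ (for instance $\inv E\,\alpha(x) = \inv E$), not vanishing modulo $\mathrm J(\mathcal A)$. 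Under the hypotheses here the commutator $[g, \tau_{j,-1}(a)]$ has no such exact shape: the off-diagonal entries of $\alpha(g)$ at $-1,-2$ are only in $\mathrm J(\mathcal A)$, and taking a commutator does not annihilate them. Your appeal to nilpotency of $\mathrm J(C)$ is also incorrect --- a finite algebra over a local ring need not have nilpotent Jacobson radical (take $K = C = \mathbb Z_{(p)}$), so this cannot be used to iterate the error away.

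The paper's proof confronts precisely this obstacle and does not attempt a direct entry-by-entry bound. It conjugates the commutator $x = {^f[g, \tau_{-1,2}(a,b)]}$ by a block-diagonal $f \in \eunit(P, \widehat L)$ with $f \equiv 1 \bmod \mathrm J(C)$ and $\inv E\, fg(e_{-1}+e_{-2}) = 0$, so that one half of the shape required by lemma~\ref{CornerExtraction} holds exactly; the remaining half is reduced to showing that $r = \inv E\, \inv{(fg)^{-1}}(e_{-1}+e_{-2})$ lies in ${\widehat I}^0$. That claim is obtained by forming a further commutator $y = [x^{-1}, \tau_{l,1-l}(u,u)]$, applying lemma~\ref{SemiparabolicExtraction} to $y$, and running a Nakayama-lemma argument in $(1-e_0)C(1-e_0)/((1-e_0){\widehat I}^0(1-e_0))$. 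Only after this does lemma~\ref{LocalLevi} supply $f_1, f_2 \in \eunit(P, L)$ isolating row and column $-1$, and lemma~\ref{BoundaryGenerators} finishes. Your second and third steps resemble this endgame, but the missing ingredient is the conjugation-plus-Nakayama bridge from $\mathrm J$-smallness to an exact block shape.
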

\begin{proof}
Suppose that a nontrivial transvection cannot be extraced. Note that if we swap indices \(1\) and \(-1\), and also swap \(2\) and \(-2\), then the condition on \(g\) will be preserved. Let us prove that \([g, \tau_{-1, 2}(a)] \in \unit(P, \lceil L \rceil)\) for all \(a \in \widehat I\).

Consider the element \(x = {^f [g, \tau_{-1, 2}(a, b)]}\) for some \(f \in \eunit(P, \widehat L)\). We will assume that \(f = e_- f e_- + e_0 + e_+ f e_+\) and \(f \equiv 1 \mod \mathrm J(C)\). If there exists such \(f\) that \(\inv E \alpha(x) (e_{-l} + \ldots + e_0) = \inv E\), then we may apply lemma \ref{CornerExtraction}. Let us take any such \(f \in \eunit(P)\) that also \(\inv E fg (e_{-1} + e_{-2}) = 0\) (it is possible, because \(e_{-1} g e_{-1}\) and \(e_{-2} g e_{-2}\) are invertible). It is easy to see that \(\inv E x (e_{-l} + \ldots + e_0) = \inv E\), hence we only need to prove that \(\inv r = \inv E \inv{(fg)^{-1}} (e_{-1} + e_{-2}) \in {\widehat I}^0 = \{x \in C \mid (x, 0) \in \widehat I\}\). In order to do this we may assume that \(a = b\) (we will need that all such \(a\) together generate the two-sided ideal \((1 - e_0) C (1 - e_0)\)). A direct calculation shows that \(\inv E x (e_1 + e_2) = w\) and \((e_{-1} + e_{-2}) x^{-1} E = \inv w + pr\) for some \(p \in (e_{-1} + e_{-2}) C (e_{-1} + e_{-2})\), which is congruent to \(-a + \inv a\) modulo the Jacobson radical, and for \(w \in \mathrm J(C)\). %Therefore, if we prove that \(x \in \unit(P, \lceil L \rceil)\), then we may proceed by considering \(\inv E \alpha(x) (e_1 + e_2)\).

Let \(y = [x^{-1}, \tau_{l, 1 - l}(u, u)]\), then \(y(e_{-1} + e_{-2} + e_0) = e_{-1} + e_{-2} + e_0\) and \(y \in \gunit(P, \lfloor L \rfloor)\) by lemma \ref{SemiparabolicExtraction}. Is is easy to see that \((e_{-1} + e_{-2}) y^{-1} \inv E = (e_{-1} + e_{-2}) x^{-1} E (\inv u - u) \inv E\) and \(E y (e_1 + e_2) = E x^{-1} E (u - \inv u) \inv E x (e_1 + e_2)\), where \(E x^{-1} E \in E + \mathrm J(C) r\). Since \(y \in \gunit(P, \lfloor L \rfloor)\), then \(((e_{-1} + e_{-2}) y^{-1} \inv E, \inv{E y (e_1 + e_2) y^{-1} (e_1 + e_2)}) \in I\), i.\,e. \((e_{-1} + e_{-2}) y^{-1} \inv E - (e_{-1} + e_{-2}) \inv{y^{-1}} (e_{-1} + e_{-2}) \inv y \inv E \in {\widehat I}^0\). Since also \((e_1 + e_2) y^{-1} (e_1 + e_2) = e_1 + e_2 + r \mathrm J(C)\), it follows that \(r \in C \inv r \mathrm J(C) + \mathrm J(C) \inv r C + {\widehat I}_0\). We a going to prove that \([r] = 0 \in A = (1 - e_0) C (1 - r_0) / ((1 - e_0) {\widehat I}_0 (1 - e_0))\). Note that \([r] \in A [\inv r] \mathrm J(A) + \mathrm J(A) [\inv r] A\), hence \([r] \in A [r] \mathrm J(A) + \mathrm J(A) [r] A\). Applying the Nakayama lemma to \(A / A [r] \mathrm J(A)\), it follows that \([r] \in A [r] \mathrm J(A)\). Then \([r] = 0\) again by the Nakayama lemma.

Now lemma \ref{LocalLevi} implies that there are \(f_1, f_2 \in \eunit(P, L)\) such that \(e_{-1} f_1 g f_2 = e_{-1} f_1 g f_2 e_{-1} = f_1 g f_2 e_{-1}\). Therefore, \(g \in \gunit(P, \lfloor L \rfloor)\) by lemma \ref{BoundaryGenerators}.
\end{proof}

Note that we can also apply the rules EX\(1\) -- EX\(4\) to \([g]\) in \(C / \mathrm J(C)\) in order to obtain a nontrivial transvection. Here we may assume that \(S = 1\). The resulting sequence can be lifted into \(C^*\) in such a way that lifts of all auxiliary transvections are in the required open subgroups.

\begin{lemma}\label{SemisimpleExtraction}
Suppose that \(l \geq 4\), \(K\) is a field, \(C\) is a finite-dimensional semi-simple \(K\)-algebra, and \(\dim_K(e_0 C e_0) < 4l^2\). Then either \(g \in \gunit(P, \lfloor L \rfloor)\), or it is possible to extract a nontrivial transvection.
\end{lemma}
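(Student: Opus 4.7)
The plan is to reduce $g$ by the operations EX1--EX4 to one of the special forms handled in Lemmas \ref{ParabolicExtraction}--\ref{AlmostDiagonalExtraction}, most naturally to the almost-diagonal form of Lemma \ref{AlmostDiagonalExtraction}. Because $C$ is semi-simple, $\mathrm J(C) = 0$, so the hypotheses of Lemma \ref{AlmostDiagonalExtraction} simplify to the clean conditions that $e_{-1}\alpha(g)e_{-1}$ and $e_{-2}\alpha(g)e_{-2}$ are invertible in the corresponding corner algebras and that $e_{\pm 1}\alpha(g)(1-e_{\pm 1}) = 0 = (1-e_{\pm 1})\alpha(g)e_{\pm 1}$. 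Since $C$ decomposes as a finite product of simple factors and all relevant data (the level $L$, the idempotents $e_i$, and the elementary subgroups) split compatibly across this product, I would first reduce to the case where $C$ is simple, i.e., $C \cong M_n(D)$ for a finite-dimensional division algebra $D$ over $K$.

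In the simple case, the strategy is rank-maximization of the corner $e_{-1}\alpha(g)e_{-1}$, in the spirit of the argument used in Proposition \ref{LocalSplitting}. I would pick $g' = f_1 g f_2$ with $f_1, f_2 \in \eunit(P, L)$ built from transvections $\tau_{j, -1}(a)$ and $\tau_{-1, j}(a)$ (for suitable $j$), maximizing the $D$-rank of $e_{-1}\alpha(g')e_{-1}$. A standard linear-algebra argument over the simple algebra $M_n(D)$ then forces every $e_{-1}\alpha(g')e_j$ and every $e_j\alpha(g')e_{-1}$ (for $j \neq -1$) to lie in $e_{-1}\alpha(g')e_{-1} \cdot e_{-1}\mathcal A e_j$, respectively $e_j \mathcal A e_{-1} \cdot e_{-1}\alpha(g')e_{-1}$. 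These residual entries may then be cancelled by additional transvections on the same side; the column $e_{-1}\alpha(g')e_0$ requires an extraction with a Heisenberg-type transvection $\tau_{-1}(h)$, and it is precisely at this point that the dimension bound $\dim_K(e_0 C e_0) < 4l^2$ becomes essential, exactly as in Proposition \ref{LocalSplitting}, to guarantee the cancellation is realizable inside the hyperbolic summands.

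If at any stage during the rank-maximization some produced commutator $[g, \tau]$ fails to lie in $\eunit(P, L)$, that commutator is itself a nontrivial transvection (or a two-term product of such with a $\frac\pi 4$ angle), and the extraction is complete. Otherwise we reach the configuration $e_{-1}\alpha(g')(1-e_{-1}) = 0 = (1-e_{-1})\alpha(g')e_{-1}$, with $e_{-1}\alpha(g')e_{-1}$ invertible. Repeating the procedure at the index $-2$ using only transvections $\tau_{-2, j}$, $\tau_{j, -2}$ with $j \neq \pm 1$ (to preserve the block structure just obtained) yields the analogous conditions at $-2$. At this point Lemma \ref{AlmostDiagonalExtraction} applies and delivers the conclusion.

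The main obstacle I anticipate is the handling of the odd index $0$: unlike the case of a purely hyperbolic $P$, here the rank-maximization must cohabit with the Heisenberg-type operations $\tau_{\pm 1}(h)$, which have more intricate commutator relations (clauses T5 and T8 of Lemma \ref{LevelTransvections}). The dimension hypothesis is precisely what forces the $e_0$-contributions to $e_{-1}\alpha(g)e_0$ and $e_0\alpha(g)e_{-1}$ to be representable through hyperbolic directions, enabling their cancellation; without it one cannot guarantee invertibility of the corner $e_{-1}\alpha(g')e_{-1}$ after the reduction. A minor secondary concern is verifying that the auxiliary transvections satisfy the EX2 constraints on membership in the auxiliary subgroups $H, H'$, but in the local setting one may take the multiplicative set $S$ trivial, so this is routine.
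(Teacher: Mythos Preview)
Your plan has a genuine circularity. The rank-maximization machinery of Proposition~\ref{LocalSplitting} and Lemma~\ref{LocalLevi} is not available for an arbitrary \(g \in C^*\): both results presuppose the \(\gunit'\)-type congruences \({}^{\alpha(g^{\pm 1})} a \equiv a \pmod I\), and it is precisely these congruences that guarantee the clearing transvections used to kill the off-diagonal entries \(e_{-1}\alpha(g')e_j\) and \(e_j\alpha(g')e_{-1}\) have parameters in \(I\), hence lie in \(\eunit(P, L)\). Without those congruences your \(f_1, f_2 \in \eunit(P, L)\) can only move by elements of the (possibly tiny) ideal \(I\), so neither rank-maximization nor the subsequent cancellation step goes through. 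Conjugation by \(\eunit(P, \widehat L)\) under EX1 does not rescue this, since it acts simultaneously on both sides and gives no independent control of a single row or column.

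The paper's proof is organized around exactly this obstacle: the bulk of the argument is spent showing, under the standing assumption that no nontrivial transvection can be extracted, that \([g, \tau_{-1,2}(a)] \in \unit(P, \lceil L \rceil)\) for all \(a \in \widehat I\). This is done by a case split on the ideal \((1-e_0)I(1-e_0)\) (trivial, equal to \((1-e_0)C(1-e_0)\), or such that \((1-e_0)\widehat I(1-e_0) = (1-e_0)\mathcal A(1-e_0)\)), each case handled by feeding carefully chosen conjugates of \([g,\tau_{-1,2}(a,b)]\) into Lemmas~\ref{CornerExtraction} and~\ref{SemiparabolicExtraction}. Only once the short-root commutator constraints \([g^{\pm 1}, \tau_\alpha(a)] \in \unit(P, \lceil L \rceil)\) are established does the paper invoke the Proposition~\ref{LocalSplitting} argument to produce an invertible corner and then apply Lemma~\ref{LocalLevi}. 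In short, you have the two halves of the proof in the wrong order. (A minor side remark: your reduction to simple \(C \cong M_n(D)\) overlooks the involution; the correct atoms are simple algebras \emph{with involution}, which include the swapped pair \(M_n(D)\times M_n(D^{\mathrm{op}})\).)
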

\begin{proof}
Assume that we cannot extract a nontrivial transvection. Without loss of generality, \(C\) is a simple algebra as an algebra with involution (i.\,e. either \(C = \Mat(n, D)\), where \(D\) is a division algebra with an involution, or \(C = \Mat(n, D) \times \Mat(n, D^{\mathrm{op}})\) and the involution swaps the factors). If \(C = e_0 C e_0\), then one can finish the proof by using lemma \ref{ParabolicExtraction}. We will prove that \(g \in \gunit'(P, \lceil L \rceil)\) similarly to the proof of lemma \ref{AlmostDiagonalExtraction}. First of all, we will prove that \([g, \tau_{-1, 2}(a)] \in \unit(P, \lceil L \rceil)\) for all \(a \in \widehat I\).

Consider the element \(x = {^f[g, \tau_{-1, 2}(a, b)]}\), where \(f \in \eunit(P, \widehat L)\), \(f = e_- f e_- + e_0 + e_+ f e_+\), and \(\inv E fg (e_{-1} + e_{-2}) = 0\). If there is \(f\) such that \(\inv E \alpha(fg) (e_{-1} + e_{-2}) = 0\), then we can apply lemma \ref{CornerExtraction} to \(x\). In the case when \((1 - e_0) \widehat I (1 - e_0) = (1 - e_0) \mathcal A (1 - e_0)\) this is obvious, hence we may assume that \((1 - e_0) I (1 - e_0) \leq (1 - e_0) C (1 - e_0)\), i.\,e. either \((1 - e_0) I (1 - e_0) = 0\), or \((1 - e_0) I (1 - e_0) = (1 - e_0) C (1 - e_0)\).

In the case \((1 - e_0) I (1 - e_0) = 0\) consider the element \(y = [x^{-1}, \tau_{l, 1 - l}(u, u)]\), it is in \(\gunit(P, \lfloor L \rfloor)\) by lemma \ref{SemiparabolicExtraction}. The equations of \(\gunit'(P, \lceil L \rceil)\) imply that \((1 - e_0) y (1 - e_0) = 1 - e_0\). In other words, \(E(u - \inv u)\inv E = (1 - e_0) x^{-1} E (u - \inv u) \inv E x (1 - e_0) \tau_{l, 1 - l}(u, u)^{-1}\) for all \(u \in e_l C e_{1 - l}\). Hence \((1 - e_0) x^{-1} E\) has full rank (in every simple factor of \(C\) it has the same rank as \(E\)), and therefore \(\inv E x e_+ = 0\). Lemma \ref{SemiparabolicExtraction} implies that \(x \in \gunit(P, \lfloor L \rfloor)\): indeed, if we apply the lemma to \(x^{-1}\) and recall that \(\inv E x^{-1} (1 - \inv E) = 0\), then from the beginning of the proof of the lemma it follows that \((1 - E - e_0) x^{-1} E = 0\), hence \(Ex^{-1} E\) is invertible, because \((1 - e_0) x^{-1} E\) has full rank. Similarly, \(e_- \alpha(f) (e_{-1} + e_{-2}) (a - \inv b) (e_1 + e_2) \alpha(f) e_+ = e_- \alpha(fg) (e_{-1} + e_{-2}) (a - \inv b) (e_1 + e_2) \alpha(g^{-1}) \tau_{-1, 2}(a, b)^{-1} \alpha(f^{-1}) (1 - e_0)\). The factor \(e_- \alpha(fg) (e_{-1} + e_{-2})\) has full rank, hence \((e_1 + e_2) \alpha(g^{-1}) \tau_{-1, 2}(a, b)^{-1} \alpha(f^{-1}) e_- = 0\) and \((e_1 + e_2) \alpha(g) (e_{-1} + e_{-2}) = 0\). Dividing the long equality by \(e_- \alpha(f) e_-\) from the left and by \(e_+ \alpha(f)^{-1} e_+\) from the right, we obtain \((e_{-1} + e_{-2}) (a - \inv b) (e_1 + e_2) = e_- \alpha(g) (e_{-1} + e_{-2}) (a - \inv b) (e_1 + e_2) \alpha(g)^{-1} e_+\). Therefore, \(\inv E \alpha(g) (e_{-1} + e_{-2}) = 0\).

In the last case, when \((1 - e_0) I (1 - e_0) = (1 - e_0) C (1 - e_0)\), we will consider \(x' = [f'g, \tau_{-1, 2}(a, a)]\) instead of \(x\), where \(f' \in \eunit(P, L)\), \(f' = e_- f' e_- + e_- f' e_+ + e_+ f' e_+ + e_0\), and \(\inv E f'g (e_{-1} + e_{-2}) = 0\) (clearly, it is sufficient to prove \([g, \tau_{-1, 2}(a, a)] \in \unit(P, \lceil L \rceil)\)). Moreover, we assume that \(e_{-1} f'g (e_{-1} + e_{-2}) \mathcal E \neq 0\) and \(e_{-2} f'g (e_{-1} + e_{-2}) \mathcal E \neq 0\), if \((1 - e_0) g (e_{-1} + e_{-2})\mathcal E \neq 0\), for all primitive central idempotents \(\mathcal E\) in \(C\) (there are only \(1\) or \(2\) of them). Since \(\inv E x' = \inv E\), then \(x' \in \gunit(P, \lfloor L \rfloor)\) by the beginning of the proof of lemma \ref{SemiparabolicExtraction} (applied to \(\inv{{x'}^{-1}}\)) and \(\inv E \inv{(fg)^{-1}} (e_{-1} + e_{-2}) (a - \inv a) \inv{(fg)} (e_1 + e_2) = 0\). In the case \((1 - e_0) g (e_{-1} + e_{-2}) \mathcal E \neq 0\) we have \(\inv E \inv{(fg)^{-1}} (e_{-1} + e_{-2}) \inv{\mathcal E} = 0\). In another case, i.\,e. \((1 - e_0) g (e_{-1} + e_{-2}) \mathcal E = 0\), we may take \(f\) such that \(\inv E \inv{(fg)^{-1}} (e_{-1} + e_{-2}) \inv{\mathcal E} = 0\). Therefore, in any case \(x' \in \unit(P, \lceil L \rceil)\) by lemma \ref{ParabolicExtraction}.

Now \([g^{\pm 1}, \tau_\alpha(a)] \in \unit(P, \lceil L \rceil)\) for all short roots \(\alpha\). Hence there is \(h \in \eunit(P, \widehat L)\) such that \(e_{-l} \alpha({^hg}) e_{-l}\) in invertible in \(e_{-l} \mathcal A e_{-l}\) (using the proof of the proposition \ref{LocalSplitting}). But \([({^hg})^{\pm 1}, \tau_\alpha(a)]\) are also in \(\unit(P, \lceil L \rceil)\) for all short roots \(\alpha\), hence by lemma \ref{LocalLevi} there are \(f_1, f_2 \in \eunit(P, L)\) such that \(e_{-l} \alpha(f_1 \, {^hg} \, f_2) = e_{-l} \alpha(f_1 \, {^hg} \, f_2) e_{-l} = \alpha(f_1 \, {^hg} \, f_2) e_{-l}\). The rest of the proof is obvious by lemma \ref{BoundaryGenerators}.
\end{proof}

\begin{prop}\label{LocalExtraction}
Suppose that \(l \geq 4\), \(K\) is a local ring, \(C\) is a finite \(K\)-algebra, \(e_0 C e_0\) is generated by less than \(4l^2\) elements as a \(K\)-module, and \(g \in C^*\). Then either \(g \in \gunit(P, \lfloor L \rfloor)\), or it is possible to extract a nontrivial transvection.
\end{prop}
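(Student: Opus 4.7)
The strategy is to reduce to the semisimple case by passing to the quotient $\overline C = C/\mathrm J(C)$. Since $K$ is local and $C$ is finite over $K$, one has $\mathfrak m C \subseteq \mathrm J(C)$, so $\overline C$ is a finite-dimensional semisimple $(K/\mathfrak m)$-algebra inheriting the pseudo-involution, the idempotents $e_i$, and the induced level $\overline L$, with $\dim_{K/\mathfrak m}(e_0 \overline C e_0) < 4l^2$. Let $\overline g \in \overline C^*$ denote the image of $g$.

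First I apply Lemma \ref{SemisimpleExtraction} to $\overline g$: either $\overline g \in \gunit(\overline P, \lfloor \overline L \rfloor)$, or a nontrivial transvection can be extracted from $\overline g$ via rules EX1--EX4 with $S = \{1\}$. In the latter subcase the entire extraction sequence lifts verbatim to $C^*$: each auxiliary transvection $\tau_\alpha(\bar a)$ is replaced by $\tau_\alpha(a)$ for a lift $a \in C$ with $a$ drawn from the corresponding component of $L$ or $\widehat L$ (which surject onto $\overline L$ and $\widehat{\overline L}$), while commutators, products, and inverses are simply reinterpreted in $C^*$. The openness conditions in EX2 are vacuous since $\Psi_S$ is trivial for $S = \{1\}$. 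The terminal transvection so constructed has its parameter outside $I$, since otherwise its image would lie in $\overline I$, contradicting the nontriviality in $\overline C$; hence it is a nontrivial transvection in $C^*$.

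In the remaining subcase $\overline g \in \gunit(\overline P, \lfloor \overline L \rfloor)$ I would bring $g$ into the form required by Lemma \ref{AlmostDiagonalExtraction}. Working in $\overline C$, since $\overline g$ lies in the general level group of a semisimple algebra, the rank-maximization argument used in the proofs of Proposition \ref{LocalSplitting} and Lemma \ref{SemisimpleExtraction} produces elements $\bar h \in \eunit(\overline P, \widehat{\overline L})$ and $\bar f_1, \bar f_2 \in \eunit(\overline P, \overline L)$ such that $\bar f_1\,{^{\bar h}\overline g}\,\bar f_2$ has invertible $e_{-1}$- and $e_{-2}$-corners and no off-diagonal entries involving these indices. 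Lifting via the surjections $\eunit(P, \widehat L) \twoheadrightarrow \eunit(\overline P, \widehat{\overline L})$ and $\eunit(P, L) \twoheadrightarrow \eunit(\overline P, \overline L)$ at the level of generating transvections, I obtain $h, f_1, f_2 \in C^*$ so that $g' = f_1\,{^h g}\,f_2$ satisfies the hypotheses of Lemma \ref{AlmostDiagonalExtraction}: invertibility of the relevant corners lifts from $\overline C$ because $C$ is semiperfect, and off-diagonal blocks that vanish in $\overline C$ sit in $\mathrm J(\mathcal A)$ in $C$. That lemma then gives either $g' \in \gunit(P, \lfloor L \rfloor)$, whence $g \in \gunit(P, \lfloor L \rfloor)$, or a nontrivial transvection.

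The main obstacle I anticipate is the second subcase: one must verify carefully that the modifications in $\overline C$ leading to the block-diagonal form really do come from rules EX1--EX4 applied to $g$ itself, with the lifted transvections respecting the distinction between $\eunit(P, L)$ and $\eunit(P, \widehat L)$. This comes down to checking surjectivity of the elementary subgroups onto their semisimple counterparts and to using the semiperfectness of $C$ to lift idempotent decompositions and corner-invertibility; once these bookkeeping issues are handled, the combination of Lemmas \ref{AlmostDiagonalExtraction} and \ref{SemisimpleExtraction} closes the argument.
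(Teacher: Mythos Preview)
Your second case is essentially the paper's argument: a double application of Proposition \ref{LocalSplitting} in the quotient $\overline C$, lifted to $C$, puts $g$ into the shape required by Lemma \ref{AlmostDiagonalExtraction}, and that lemma finishes.

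The first case, however, has a real gap. When you lift the extraction sequence from $\overline C$ to $C^*$, the terminal element $g_N$ is \emph{not} a transvection. It is obtained from $g$ by rules EX1--EX4 with lifted auxiliary transvections, so all you know is that its image $[g_N]$ in $\overline C$ equals the nontrivial transvection $\tau_\alpha(\bar a)$; the element $g_N$ itself is only congruent to some $\tau_\alpha(a)$ modulo $\mathrm J(C)$, and the Jacobson-radical noise has no reason to vanish. Your sentence ``the terminal transvection so constructed has its parameter outside $I$'' therefore does not apply, because $g_N$ has no well-defined parameter.

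The paper closes this gap by invoking Lemma \ref{AlmostDiagonalExtraction} in this case too. Since $[g_N]$ is an elementary transvection, it acts as the identity on all but at most four nonzero indices; as $l \geq 4$ one can pick two indices (playing the role of $-1,-2$) on which $[g_N]$ is the identity, so the corresponding diagonal corners of $g_N$ are invertible and the off-diagonal blocks lie in $\mathrm J(\mathcal A)$. A nontrivial transvection is never in $\gunit(\overline P,\lfloor \overline L\rfloor)$ (that group has level $\overline L$), hence $g_N \notin \gunit(P,\lfloor L\rfloor)$, and Lemma \ref{AlmostDiagonalExtraction} is forced into its second alternative, producing a genuine nontrivial transvection in $C^*$.
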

\begin{proof}
By lemma \ref{SemisimpleExtraction} either we can extract a nontrivial transvection modulo \(\mathrm J(C)\) (in this case we can extract a nontrivial transvection in \(C\) by lemma \ref{AlmostDiagonalExtraction}), or \([g] \in \gunit(P / \mathrm J(C), \lfloor L / \mathrm J(C) \rfloor)\). In the second case double application of proposition \ref{LocalSplitting} to \(g\) gives us an element \(g' = f \, {^{h\!}g}\) for \(f \in \eunit(P, L)\) and \(h \in \eunit(P, \widehat L)\) such that lemma \ref{AlmostDiagonalExtraction} can be applied to \(g'\).
\end{proof}

\section{Application to classical groups}

Let us begin from the case of even unitary groups. If \(P = \bigoplus_{i = 1}^n \herm(P_i)\) is a quadratic module over a ring \(R\) with a pseudo-involution, then by paper \cite{OddPetrov} we can assume that \(A_R = R / \Ker(\phi)\). The set \(\Lambda = \Ker(\phi) \leq R\) is a form parameter in Bak's sense (it is an additive subgroup closed under the action of \(R^\bullet\) and it is contained between \(\Lambda_{\mathrm{min}} = \{r - \inv r \lambda \mid r \in R\}\) and \(\Lambda_{\mathrm{max}} = \{r \in R \mid r + \inv r \lambda = 0\}\)). The map \(\tr\) is given by the formula \(\tr(r + \Lambda) = r + \inv r \lambda\). We also assume that all \(P_i\) are fully projective over \(R\) (for example, \(P_i = R\)) and \(l \geq 4\).

The ring \(C = E = \End_R(P_R)\) is Morita equivalent to the ring \(R\), hence the lattices of ideals of \(C\) and \(R\) are isomorphic. Since quasi-finiteness is Morita invariant, the assumption of the main theorem \ref{Classification} will be satisfied if \(R\) is quasi-finite over its center (and the center can be taken as \(K\)). The group \(\mathcal H\) in this case equals \(C\) with the addition operations and \(\Lambda\) from lemma \ref{Lambda} is Morita equivalent to the form parameter, i.\,e. \(\Lambda_i = \{x \in \End_R(P_i) \mid B(p, xp) \in \Ker(\phi) \text{ for } p \in P_i\}\).

In the even case levels and augmented levels are the same, they are just pairs \(L = (I, \Gamma)\), where \(I = \inv I \leqt \mathcal A\) and \(\Gamma \leq \mathcal H = C\) are such that \(I^2 \subseteq I\), \(\tr(\Gamma) \leq I\) and \(\Gamma \cdot I \dotplus \Gamma \cdot C \dotplus \Lambda \cdot I \dotplus \phi(I) \leq \Gamma\). The main theorem claims that if \(l \geq 4\) and \(R\) is quasi-finite, then all subgroups \(G \leq \gl(P)\) normalized by \(\eunit(P)\) are contained between \(\eunit(P, L)\) and \(\gunit(P, L)\) for unique level \(L\). Here \(\gunit(P, L) = \{g \in \gl(P) \mid [\{g, g^{-1}\}, \eunit(P, \widehat L)] \subseteq \eunit(P, L)\}\).

If we are interested only in subgroups of \(\unit(P)\) normalized by \(\eunit(P)\), then these subgroups can be described in the same way if we take \(\gunit(P, L) \cap \unit(P)\) instead of \(\gunit(P, L)\) and if we consider only the levels contained in the level \(\eunit(P)\). In other words, these subgroups are described by pairs \(L = (I, \Gamma)\), where \(I = \inv I \leqt C\) and \(\Gamma \leq \Lambda\) are such that \(\tr(\Gamma) \leq I\) and \(\Gamma \cdot C \dotplus \Lambda \cdot I \dotplus \phi(I) \leq \Gamma\). Under Morita equivalence these pairs correspond to form ideals of the ring \(R\), hence we obtain the main result of \cite{UnitaryII}, though in this paper the theorem was proved under weaker assumption \(l \geq 3\). In particular, we have the sandwich classification theorem for subgroups of \(\mathrm O(2l, K)\) and \(\mathrm{Sp}(2l, K)\) normalized by corresponding elementary subgroups, where \(K\) is a commutative ring.

Now let us classify overgroups of \(\eunit(P)\) in \(\gl(P)\). These groups are normalized by \(\eunit(P)\), hence for \(l \geq 4\) and quasi-finite \(R\) the main theorem can be applied. The groups are described through levels containing the level \(\eunit(P)\), i.\,e. pairs \(L = (I, \Gamma)\), where \(I = C \oplus J \times 0 \subseteq \mathcal A = C \times C\) and \(\Gamma \leq \mathcal H = C\) are such that \(J = \inv J \leqt C\), \(\Lambda \dotplus \Gamma \cdot C \dotplus J \leq \Gamma \leq \{x \in C \mid x + \inv x \in J\}\). Under Morita equivalence these pairs \((J, \Gamma)\) correspond to the levels from paper \cite{Petrov}, hence we obtain the main result from this paper (though in the paper there was another assumption instead of quasi-finiteness of \(R\), but it also was only for \(l \geq 4\)). In particular, in this way we have the sandwich classification of overgroups of \(\mathrm O(2l, K)\) and \(\mathrm{Sp}(2l, K)\) in the general linear group over a commutative ring \(K\), i\,e. the results from papers \cite{OrthogonalEven} and \cite{Symplectic}.

In the case of odd orthogonal group \(\mathrm O(2l + 1, K)\) the main theorem cannot be applied directly, because we have assumed nondegeneracy of the hermitian form and for odd orthogonal group the form may be degenerate if two is not invertible in \(K\). Therefore, in order to deal with it properly, we should generalize the classification theorem for the case of \(P_0\) with degenerate form, where one cannot freely use proposition \ref{QuadraticMorita}.

Fortunately, we can circumvent this in the classification of overgroups of \(\mathrm{EO}(2l + 1, K)\) in \(\gl(2l + 1, K)\). Take a nondegenerate symmetric bilinear form \(B(x, y) = xy\) on \(P_0 = K\) and take the corresponding quadratic form from the odd form parameter \(\mathcal L = 0\). If we take \(P_i = K\) for \(1 \leq i \leq l\), then we will obtain the quadratic space \(P = K^{2l + 1}\), its unitary group equals \(\unit(P) = \mathrm O(2l, K)\). Indeed, if \(g \in \gl(P)\) preserves the quadratic form, then this element trivially acts on \(P_0\). If \(g\) also preserves the symmetric bilinear form, then it reduces to an orthogonal operator on \(P_0^\perp = \bigoplus_{i > 1} \herm(P_i)\). In this case our theorem gives among other things the sandwich classification of overgroups of \(\mathrm{EO}(2l, K)\) in \(\gl(2l + 1, K)\). The overgroups of \(\mathrm{EO}(2l + 1, K)\) will be obtained from the theorem, if we calculate the level of \(\mathrm{EO}(2l + 1, K)\).

For the module \(P = K^{2l + 1}\) we have \(C = E = \Mat(2l + 1, K)\) with the involution \(\inv{(a_{i, j})_{i, j = -l}^l} = (a_{-j, -i})_{i, j = -l}^l\) (it is the reflection across the antidiagonal), \(\mathcal A = E \times E\), and \(\mathcal H = e_0 E \times E \times Ee_0 = {^{2k + 1}\!K} \times \Mat(2l + 1, K) \times K^{2l + 1}\), where \(K^{2l + 1}\) is the column space and \({^{2k + 1}\!K}\) is the row space. Also \(\Lambda_i = 0\), i.\,e. \(\Lambda = \phi(E) \cdot (1 - e_0)\). An augmented level is a pair \(L = (I, \Gamma)\), where \(I = \inv I \leq \mathcal A\) is a \(K\)-submodule and \(\Gamma \leq \mathcal H\) are such that \(I^2 \leq I\), \(I (1 - e_0) E (1 - e_0) \leq I\), \(\pi(\Gamma) \leq I\), \(\tr(\Gamma) + \inv{\pi(\Gamma)} \pi(\Gamma) \leq I\), \(\Gamma \cdot (I + K + (1 - e_0) E (1 - e_0)) \dotplus \phi(I) \leq \Gamma\), and \(e_0 I (1 - e_0) = \pi(\Gamma \cdot (1 - e_0))\). The group \(\mathrm{EO}(2l, K)\) has the level \(L_0 = ((1 - e_0) E (1 - e_0), \Lambda)\) and the group \(\mathrm{EO}(2l + 1, K)\) has the level \(L_1 = (I_1, \Gamma_1)\), where \((1 - e_0) I_1 (1 - e_0) = (1 - e_0) E (1 - e_0)\) and \(\Gamma_1 \cdot (1 - e_0) = \{(x, y, z) \in \mathcal H \cdot (1 - e_0) \mid z = 2 \inv x, y = \inv x x\} \dotplus \phi(E)\).

This can be used in the classification of overgroups of \(\mathrm{EO}(2l + 1, K)\) for \(l \geq 4\). Let \(L = (I, \Gamma)\) be an arbitraty level containing \(L_1\). It is uniquely detreminted by the groups \((1 - e_0) I (1 - e_0) = (1 - e_0) E (1 - e_0) \oplus 0 \times J\), \(e_0 I (1 - e_0) = \{(x, -2x) \mid x \in {^{2l}K}\} \oplus 0 \times \mathfrak b^{2l}\), and \(\Gamma \cdot e_1\), where \(J \leqt (1 - e_0) E (1 - e_0)\) as a two-sided ideal is Morita-equivalent to the ideal \(\mathfrak a \leqt K\), \(\mathfrak b \leqt K\) is also an ideal, and the group \(\Gamma \cdot e_1 \leq \mathcal H \cdot e_1 \cong K \times K \times K\) is of type \(\{(x, x^2, 2x)\} \dotplus W\), where \(0 \times \mathfrak a \times 0 \leq W \leq \{(0, y, z) \in K \times K \times K \mid 2y \in \mathfrak a, z \in \mathfrak b\}\) is uniquely determined. Moreover, \(2\mathfrak a \leq \mathfrak b \leq \mathfrak a\), for every \(z \in \mathfrak b\) there is \(y\) such that \((0, y, z) \in W\), and for all \((0, y, z) \in W\), \(a \in \mathfrak a\), and \(k \in K\) the elements \((0, k^2 y, kz)\) and \((0, 0, az)\) are also in \(W\). Conversely, if \(\mathfrak a, \mathfrak b\) and \(W\) satisfy these conditions, then they can be obtained from some level containing \(L_1\). In the case \(2 \in K^*\) we have \(\mathfrak a = \mathfrak b\) and \(W = 0 \times \mathfrak a \times \mathfrak a\), hence we obtain the result from paper \cite{OrthogonalOdd} (where this was proved under weaker assumption \(l \geq 3\)).

Finally, we can classify the subgroups of \(\mathrm O(2l + 1, K)\) that contain \(\mathrm{EO}(2l, K)\) for \(l \geq 4\). Let \(L = (I, \Gamma)\) be a level between \(L_0\) and \(L_1\). It is uniquely determined by the groups \(e_0 I (1 - e_0) \leq \{(x, -2\inv x) \mid x \in {^{2l}\!K}\}\) and \(\Gamma \cdot e_1\). Here \(e_0 I (1 - e_0) = \{(x, -2x) \mid x \in {^{2l}\!\mathfrak a}\}\) for unique ideal \(\mathfrak a \leqt K\) and \(\Gamma \cdot e_1 \leq \mathcal H \cdot e_1 \cong K \times K \times K\) is of type \(\{(x, x^2, 2x) \mid x \in \mathfrak b\}\) for unique ideal \(\mathfrak b \leqt K\). The condition \(\pi(\Gamma \cdot e_1) = e_0 I e_1\) means that \(\mathfrak a = \mathfrak b\). Conversely, every ideal \(\mathfrak a\) can be obtained from unique level between \(L_0\) and \(L_1\). Therefore, the overgroups of \(\mathrm{EO}(2l, K)\) in \(\mathrm O(2l + 1, K)\) are classified by the ideals of \(K\).

\bibliographystyle{plain}  
\bibliography{references}  %%% Remove comment to use the external .bib file (using bibtex).
%%% and comment out the ``thebibliography'' section.

%%% Comment out this section when you \bibliography{references} is enabled.
\iffalse

\fi

\end{document}